\documentclass[final,onefignum,onetabnum]{siamart171218}



\usepackage{amsmath, amssymb}
\usepackage{lipsum}
\usepackage{amsfonts}
\usepackage{booktabs}
\usepackage{comment}
\usepackage{mathtools}
\usepackage{subfigure}

\usepackage{mathrsfs}
\usepackage{graphicx}
\usepackage{epstopdf}
\usepackage{algorithmic}

\usepackage[normalem]{ulem} 

\numberwithin{equation}{section}
\numberwithin{table}{section}
\numberwithin{figure}{section}

\ifpdf
  \DeclareGraphicsExtensions{.eps,.pdf,.png,.jpg}
\else
  \DeclareGraphicsExtensions{.eps}
\fi


\newsiamremark{remark}{Remark}
\newsiamremark{hypothesis}{Hypothesis}
\newsiamremark{assumption}{Assumption}
\crefname{hypothesis}{Hypothesis}{Hypotheses}
\newsiamthm{claim}{Claim}

\newcommand\normt[1]{\lVert#1\rVert_2}
\newcommand\normF[1]{\lVert#1\rVert_F}

\newcommand\norm[1]{\lVert#1\rVert}

\DeclareMathOperator{\rank}{rank}

\def\new#1{\textcolor{black}{#1}}

\usepackage{scalerel}

\let\bigopsize\bigoplus
\def\bigoplus{{\scalerel*{\boldsymbol\oplus}{\bigopsize}}}

\headers{Mixed Precision HODLR Matrices}{Erin Carson, Xinye Chen, and Xiaobo Liu}

\title{Mixed Precision HODLR Matrices\thanks{Version of December 5, 2024.
\funding{The first and second author acknowledge funding from the European Union (ERC, inEXASCALE, 101075632). Views and opinions expressed are those of the authors only and do not necessarily reflect those of the European Union or the European Research Council. Neither the European Union nor the granting authority can be held responsible for them.  The first author additionally acknowledges funding from the Charles University Research Centre program No. UNCE/24/SCI/005. 
The second author acknowledges funding from the France 2030 NumPEx Exa-MA (ANR-22-EXNU-0002) project managed by the French National Research Agency (ANR). }
}}

\author{Erin Carson\thanks{Department of Numerical Mathematics, Charles University, Prague, Czech Republic
  (\email{carson@karlin.mff.cuni.cz}), 
  }
\and Xinye Chen\thanks{LIP6, Sorbonne Université, CNRS, Paris, France
  (\email{xinye.chen@lip6.fr}), 
  }
\and Xiaobo Liu\thanks{Max Planck Institute for Dynamics of Complex Technical Systems, Magdeburg, Germany (\email{xliu@mpi-magdeburg.mpg.de}).}}

\usepackage{amsopn}

\makeatletter
\newcommand*{\addFileDependency}[1]{
  \typeout{(#1)}
  \@addtofilelist{#1}
  \IfFileExists{#1}{}{\typeout{No file #1.}}
}
\makeatother

\newcommand*{\myexternaldocument}[1]{%
    \externaldocument{#1}%
    \addFileDependency{#1.tex}%
    \addFileDependency{#1.aux}%
}

\ifpdf
\hypersetup{
  pdftitle={Mixed precision HODLR matrices},
  pdfauthor={E. Carson, X. Chen, and X. Liu}
}
\fi


\myexternaldocument{ex_supplement}

\begin{document}

\maketitle

\begin{abstract}
Hierarchical matrix computations have attracted significant attention in the science and engineering community as exploiting  data-sparse structures can significantly reduce the computational complexity of many important kernels. One particularly popular option within this class is the Hierarchical Off-Diagonal Low-Rank (HODLR) format. 
In this paper, we show that the off-diagonal blocks of HODLR matrices that are approximated by low-rank matrices can be represented in low precision without degenerating the quality of the overall approximation (with the error growth bounded by a factor of $2$). We also present an adaptive-precision scheme for constructing and storing HODLR matrices, and we prove that the use of mixed precision does not compromise the numerical stability of the resulting HODLR matrix--vector product and LU factorization. That is, the resulting error in these computations is not significantly greater than the case where we use one precision (say, double) for constructing and storing the HODLR matrix. 
Our analyses further give insight on how one must choose the working precision in HODLR matrix computations relative to the approximation error in order to not observe the effects of finite precision. Intuitively, when a HODLR matrix is subject to a high degree of approximation error, subsequent computations can be performed in a lower precision without detriment. We demonstrate the validity of our theoretical results through a range of numerical experiments.  
\end{abstract}

\begin{keywords}
Hierarchical matrices, mixed precision computing, matrix computations
\end{keywords}

\begin{AMS}
65G50, 65F55, 68Q25, 65D18
\end{AMS}

\section{Introduction}
\label{sec:intro}
The costs of standard linear algebraic operations (matrix--vector products, matrix factorizations, matrix inversion, etc.) grow prohibitively with matrix size. Accelerating such operations typically requires exploiting special structure of the matrix.  Finite element discretization of differential equations, e.g., time-space diffusion equations \cite{begr97, beha03, kbr11, doi:10.1137/18M1180803} and boundary integral equations \cite{HACKBUSCH2002129, doi:10.1137/1.9781611976045}, often result in rank-structured matrices, most of whose off-diagonal blocks are often of low rank. Rank-structured matrices are often represented in Block Low Rank (BLR) format \cite{doi:10.1137/16M1077192} or hierarchical formats (e.g., $\mathcal{H}$-matrices \cite{beha03, BORM2003405}, $\mathcal{H}^2$-matrices \cite{BORM2003405, 10.1007/978-3-642-59709-1_2}, HSS matrices \cite{amda13, doi:10.1137/S0895479803436652}), whose operations often reduce the complexity of matrix computations.  

Hierarchical matrices \cite{10.5555/1481537}, often abbreviated as $\mathcal{H}$-matrices, comprise a class of dense rank-structured matrices with a hierarchical low-rank structure, which is used to approximate a dense matrix by dividing it into multiple submatrices in a hierarchical way, where a number of submatrices are selected to be approximated by low-rank factors according to an admissibility condition. Low-rank approximation techniques involving the singular value decomposition \cite[sect.~2.4]{GoluVanl96}, QR factorization \cite[sect.~5.2]{GoluVanl96}, interpolative decomposition as well as their randomized variants \cite{doi:10.1137/100786617}, and adaptive cross approximation \cite{Rjasanow}, have been widely studied. $\mathcal{H}$-matrices have a wide range of applications including but not limited to, linear system solvers \cite{amda13, 10046094, doi:10.1137/18M1180803}, radial basis function interpolation \cite{amda13, doi:10.1137/22M1491253},  matrix functions~\cite{casulli2024computing, 23370}, matrix equations~\cite{babe06, ghk03, doi:10.1137/17M1161038}, matrix factorizations~\cite{doi:10.1137/16M1077192, Benner2010, kressner2018fast, Lintner2004}, preconditioning \cite{doi:10.1137/20M1365958}, kernel approximation \cite{CHEN2019108828, doi:10.1137/16M1101167, 8425507}, and $n$-body problems and particle simulations \cite{Barnes1986, GREENGARD1987325}. Employing $\mathcal{H}$-matrices often results in reduced runtime and enables the storage of a variety of dense matrices with linear-polylogarithmic complexity, e.g.,  a square matrix--vector product is reduced from $O(n^2)$ to $O(n\log n)$ in both time and space complexity.

\begin{table}[h]
\caption{Units roundoff of various floating point formats; $u$ denotes the unit roundoff corresponding to the precision, $x_{\min}$ denotes the smallest positive normalized floating-point number, $x_{\max}$ denotes the largest floating-point number, $t$ denotes the number of binary digits in the significand (including the implicit leading bit), $e_{\min}$ denotes exponent of $x_{\min}$, and $e_{\max}$ denotes exponent of $x_{\max}$.} 
\label{table:unitroundoff} 
\centering \setlength\tabcolsep{5pt}\scriptsize
\begin{tabular}{l l l l r r r} 
\hline\\[-1.5mm]
& \qquad $u$      & \quad $x_{\min}$  &    \quad $x_{\max}$  &  $t$    &  $e_{\min}$  & $e_{\max}$\\[1mm] \hline\\[-0.6mm]
quater precision (q52) &  $1.25 \times 10^{-1}$  &  $6.10 \times 10^{-5}$&   $5.73 \times 10^{4}$ &        -16  &    -14   &   15 \\
 bfloat16 (bf16) &   $3.91\times 10^{-3}$  &  $1.18\times 10^{-38}$  & $3.39\times 10^{38}$   & 8     &   -126 & 127 \\
 half precision (fp16)  &  $4.88\times 10^{-4}$ &   $6.10 \times 10^{-5}$ &  $6.55\times 10^{4}$&   11   &   -14 &     15 \\
 single (fp32)  &  $5.96\times 10^{-8}$  &  $1.18\times 10^{-38}$ &   $3.40\times 10^{38}$ &  24    &  -126 &    127 \\
 double (fp64)  &  $1.11\times 10^{-16}$  &  $2.23\times 10^{-308}$ & $1.80\times 10^{308}$ &  53  &   -1022  &  1023 \\[1mm]
\hline 
\end{tabular}
\end{table}

Technically, a matrix with a low-rank off-diagonal structure can be represented in a hierarchical matrix format. One commonly-used format is Hierarchical Off-Diagonal Low-Rank (HODLR), which is a type of $\mathcal{H}$-matrix with weak admissibility conditions. HODLR matrices are formulated by a fixed hierarchical block structure in terms of a binary cluster tree, associated with low-rank approximation of all off-diagonal blocks in each level of the tree which are not further refined. Employing the HODLR matrix format often results in faster matrix computations compared to traditional arithmetic, e.g., the computational complexity of the HODLR matrix--vector product takes $O(pn \log n)$, where $p$ is the maximum rank of the low-rank approximations.  
\new{This reveals the significant role that the maximum rank $p$ plays in the computational complexity of operations on a HODLR matrix. 
For example, the ranks of the HODLR representation of the dense matrices from $n$-body problems in 1D scale like $O(\log n)$, enabling 
matrix--vector products in HODLR representation to be performed with
a complexity of $O(n\log^2 n)$, i.e., almost linear in the data size. 
However, the HODLR ranks for this class of matrices will scale at a faster rate with the data size for problems in higher dimensions, $O(\sqrt{n})$ in 2D and $O(n^{3/2})$ in 3D~\cite{A2024}, and 
as such, the benefits of using the HODLR matrix
representation for computations are diminished.
There are many techniques to improve the performance of the HODLR format in specific domains,} for example,  HODLR with random sampling \cite{doi:10.1137/15M1016679}, HODLR2D \cite{doi:10.1137/22M1491253}, and HODLR3D \cite{A2024} for $n$-body problems. Some high performance software has also been developed; see, e.g., \cite{Ambikasaran2019, mrk20}.

It is known that using low precision arithmetic, now widely available in commercial hardware, often leads to increased performance, a reduced amount of data stored and transferred, as well as reduced energy consumption. 
The use of low precision has emerged as a key strategy to combat data-movement bottlenecks and improve performance in high-performance architectures \cite{10.1145/2503210.2503283}. 
On AMD’s Radeon VII GPU, for example, the compute performance in single precision is about 4$\times$ that in double precision \cite{doi:10.1177/10943420211003313}. Commonly-used floating point formats are summarized in \tablename~\ref{table:unitroundoff}.

It is important to note that the use of low precision can potentially lead to a loss of accuracy and suffers from a reduced range of representable numbers. The key is thus to devise \emph{mixed precision} approaches, in which different precisions are selectively used in different parts of a computation in order to improve performance while maintaining guarantees on accuracy and stability. 
There has thus been much recent work in utilizing mixed precision arithmetic within numerical linear algebra algorithms; see \cite{doi:10.1177/10943420211003313, Higham_Mary_2022} for a survey. For example, \cite{10.1007/978-3-319-93698-7_45} shows that an efficient use of half precision arithmetic Tensor Cores deployed in NVIDIA V100 PCIe GPUs, leads to up to 4$\times$ speedups, with about 80\% reduction in the energy usage compared to highly-optimized linear system solvers. These benefits as well as the increasing support for half-precision arithmetic in modern hardware (e.g., NVIDIA P100 GPU,  Google's Tensor Processing Units, the ARM NEON architecture, and Fujitsu A64FX ARM processor) make lower precision arithmetic appealing in many scientific and engineering applications. 

There are a few existing works on mixed precision implementations and performance analysis of computations with hierarchical matrices. 
For example, \cite{kriemann2024performance} has demonstrated the benefits of $\mathcal{H}$-matrix--vector multiplication arithmetic using low precision compression. 
The work~\cite{bkmt10} introduces the hierarchical diagonal blocking (HDB) representation for sparse matrices and shows that reduced precision can be used to accelerate sparse matrix--vector multiplications (SpMV) under such a blocking scheme, and~\cite{10.1145/3368474.3368479} studies the effect of mixed precision $\mathcal{H}$-matrix--vector multiplications in the boundary element method using the binary64 (fp64) and binary32 (fp32) arithmetic operations.  
Mixed precision computation has also been exploited in the tile low-rank (TLR) matrix format for dense matrix--vector multiplications 
associated with multidimensional convolution operators~\cite{rhlk22} and for Cholesky factorization during the log-likelihood function evaluation in environmental applications~\cite{alsg19}.
However, these prior works are largely application- and implementation-focused, and are missing a rounding error analysis of hierarchical matrix construction and computations in mixed precision arithmetic. 

In this paper, we consider constructing HODLR matrices in a mixed precision manner and offer insights into the resulting behavior of finite precision computations. Our analysis confirms what is largely intuitive: the lower the quality of the low-rank approximation, the lower the precision which can be used without detriment. We provide theoretical bounds which determine which precisions can safely be used in order to balance the overall error. Our primary contributions are as follows:
\begin{itemize}
    \item We develop a mixed precision algorithm for constructing HODLR matrices, and we analyze the global approximation error. Our analysis shows that as the tree \new{level} increases, the unit roundoff must be smaller to offset the error between the HODLR matrix and the original matrix (see Theorem~\ref{thm:mpblr-uv}). 
    \item Based on our error analysis, we propose an adaptive scheme for precision selection, which dynamically determines what degree of precision is required for the computations in each level of the cluster tree. 
    \item We analyze the backward error in computing matrix--vector products 
    and LU factorizations with mixed precision HODLR matrices constructed using our adaptive scheme. Our analyses show how the working precision $u$ should be chosen relative to the approximation parameter $\varepsilon$, giving insight into how these errors should be balanced. Our finite precision analysis remains valid in the case where the HODLR matrices are stored in one precision and therefore also provides new results for this case.
    \item We perform a series of simulations across various datasets to verify our theoretical results. Our code for constructing mixed precision HODLR matrices and fully reproducible experimental code is publicly available at \url{https://github.com/inEXASCALE/mphodlr_exp}.
\end{itemize}

The rest of the paper is organized as follows; 
Section~\ref{sec:hodlr} describes the HODLR matrix representation and the approximation error in HODLR matrices. Section~\ref{sec:mixhodlr} presents our adaptive-precision algorithm for constructing and storing HODLR matrices and analyzes the resulting global representation error. In this section we also give backward error analyses for matrix--vector products and LU factorization with mixed precision HODLR matrices, which come with constraints on the working precision relative to the approximation parameter. To verify our theoretical results, simulations are performed in Section~\ref{sec:nexp}, and the paper is concluded in Section~\ref{sec:conclude}.

We will use the standard model of floating-point arithmetic in \cite[sect.~2.2]{high:ASNA2}. As is standard in the literature, we use the phrase ``precision $u$'' to mean ``precision with unit roundoff $u$''. Given an integer $n$, we define 
$\gamma_n := nu/(1 - nu)$.

\new{Unless otherwise specified, we use hats to denote values computed in floating-point arithmetic, e.g., $\widehat{X}$ denotes $X$ computed in finite precision. 
We also use $\mathrm{fl}(\cdot)$ with an argument that is an arithmetic expression to denote the computed quantity of that expression.
We use the notation $H\equiv A$ to denote the matrix defined in a hierarchical way while using $A$ to emphasize the matrix without explicit hierarchical structure.}
We use the notation $\lesssim$ when 
dropping negligible terms of second order or higher 
in the error bounds; in particular, the threshold $\varepsilon>u$
is viewed as having the same order as the working precision $u$ throughout: for example,
the term $\varepsilon u$ is not necessarily negligible in the expression $\varepsilon^2 +\varepsilon u$, but can be safely dropped from the expression $u +\varepsilon u$.
We will denote by $\|\cdot\|$ any consistent matrix norm, though we will mainly use the Frobenius norm, $\|A\|_F=(\sum_{i,j} |a_{ij}|^2)^{1/2}$.

\section{HODLR Matrices}\label{sec:hodlr}

A matrix $H \in \mathbb{R}^{n \times n}$ is a HODLR matrix if the off-diagonal blocks are low rank and
the diagonal blocks have a similar off-diagonal low-rank structure, which can often further be represented as a HODLR matrix or have sufficiently small sizes.
The computation of HODLR matrix is often performed via a recursive block partition
associated with a binary tree, whose definition is given in Definition~\ref{def.binary-tree}.

\begin{definition}[{\cite[Def.~2.1]{mrk20}}]\label{def.binary-tree}
    A completely balanced binary tree $\mathcal{T}_{\ell}$ of depth $\ell$, whose nodes are subsets of $\{1, \ldots, n\}$, is a cluster tree if
    \begin{enumerate}
        \item[(a)] its root is $I_1^{0}:=I=\{1, \ldots, n\}$;
        \item[(b)] the nodes at level $k$, denoted by $I_1^{k}, \ldots, I_{2^k}^{k}$, form a partitioning of $\{1, \ldots, n\}$ into consecutive indices:
        $I_i^k = \{n_{i-1}^{k}+1, \ldots, n_{i}^{k}-1, n_{i}^{k}\}$
        for some integers $0=n_0^{k} \le n_1^{k}\le\cdots \le n_{2^k}^{k}=n,\ k = 0, \ldots, \ell$. In particular, if $n_{i-1}^{k} = n_i^{k}$ then $I_{i}^{k}=\emptyset$; and
        \item[(c)] the node $I_i^k$ has children $I_{2i-1}^{k+1}$
        and $I_{2i}^{k+1}$ for any $1\le k \le \ell -1$. The
        children are partitioned from their parent.
    \end{enumerate}
\end{definition}

Often, the cluster tree $\mathcal{T}_{\ell}$ is represented in a balanced manner in the sense that the cardinalities of the index sets on the same level are nearly equal and the depth of the tree is determined by a minimal diagonal block size $n_{\min}\ge 1$ for stopping the recursion. 
\new{To simplify the notation and presentation, throughout our analysis it is assumed that a perfectly balanced binary tree is used}, which means $n = 2^{\ell}n_{\min}$.
For an $\ell$-level HODLR format of the matrix $A \in \mathbb{R}^{n \times n}$, the $i$th diagonal block at level $k$, where $1 \le i \le 2^k$ and $0 \le k < \ell$, 
is given by~\cite{amda13}
 \begin{equation}\label{eq:hodlr_block}
     H_{i, i}^{(k)} = \begin{bmatrix}
         H_{2i-1, 2i-1}^{(k+1)} & H_{2i-1, 2i}^{(k+1)}\\
         H_{2i, 2i-1}^{(k+1)} & H_{2i, 2i}^{(k+1)}\\
     \end{bmatrix} \approx 
     \begin{bmatrix}
         H_{2i-1, 2i-1}^{(k+1)} & \widetilde{U}_{2i-1}^{(k+1)} (\widetilde{V}_{ 2i}^{(k+1)})^T\\
         \widetilde{U}_{2i}^{(k+1)} (\widetilde{V}_{2i-1}^{(k+1)})^T & H_{2i, 2i}^{(k+1)}\\
     \end{bmatrix}
     =\widetilde{H}_{i, i}^{(k)},
 \end{equation}
where $H_{i, i}^{(k)} \in \mathbb{R}^{n/2^k \times n / 2^k}$, $\widetilde{U}_{2i-1}^{(k)}, \widetilde{U}_{2i}^{(k)}, \widetilde{V}_{2i-1}^{(k)}, \widetilde{V}_{2i}^{(k)} \in \mathbb{R}^{n / 2^k \times p}$ and $p \ll n$. 
In particular, $H_{1, 1}^{(0)} = A$. The $H_{2i-1, 2i-1}^{(k+1)}$ and $H_{2i, 2i}^{(k+1)}$ blocks can be further treated as HODLR matrices. 
See Fig~\ref{fig:normdist} for examples of HODLR matrix of depth
$\ell=6$ (represented in mixed precision).

Associated with the cluster tree $\mathcal{T}_{\ell}$, one can define a $(\mathcal{T}_{\ell}, p)$-HODLR matrix as follows. 

\begin{definition}[$(\mathcal{T}_{\ell}, p)$-HODLR matrix {\cite[Def.~2.2]{mrk20}}]\label{def:hodlr}
Let the matrix $H \in \mathbb{R}^{n \times n}$ be a $(\mathcal{T}_{\ell}, p)$-HODLR matrix. Then every off-diagonal block $H(I_i^{k},I_j^{k})$ associated with siblings $I_i^{k}$ and $I_j^{k}$ in $\mathcal{T}_{\ell}$, $k = 1, \ldots, \ell$, has rank at most $p$. The HODLR rank of $H$ generated by $\mathcal{T}_{\ell}$ is the smallest integer $p$ such that $H$ is a $(\mathcal{T}_{\ell}, p)$-HODLR matrix.
\end{definition}

As the level of the tree increases, the off-diagonal blocks of a $(\mathcal{T}_{\ell}, p)$-HODLR matrix
in the partition have smaller dimension, and this means 
the rank constraint $p$ becomes less restrictive for them.
We can extend the definition of a $(\mathcal{T}_{\ell}, p)$-HODLR matrix in Definition~\ref{def:hodlr} to a more practical setting, in the sense that 
each off-diagonal block of a matrix in the new class is 
close to the corresponding block of the associated $(\mathcal{T}_{\ell}, p)$-HODLR matrix (which has rank at most $p$) with the relative difference $0\le\varepsilon<1$.
This motivates the definition of a 
$(\mathcal{T}_{\ell}, p, \varepsilon)$-HODLR matrix, given as Definition~\ref{def:hodlr-eps}.

\begin{definition}[$(\mathcal{T}_{\ell}, p, \varepsilon)$-HODLR matrix]\label{def:hodlr-eps}
Let $H \in \mathbb{R}^{n \times n}$ be a $(\mathcal{T}_{\ell}, p)$-HODLR matrix. Then $\widetilde{H} \in \mathbb{R}^{n \times n}$ is defined to be a $(\mathcal{T}_{\ell}, p, \varepsilon)$-HODLR matrix  to $H$, if every off-diagonal block $\widetilde{H}(I_i^{k},I_j^{k})$ associated with siblings $I_i^{k}$ and $I_j^{k}$ in $\mathcal{T}_{\ell}$, $k = 1, \ldots, \ell$, 
satisfies $\|\widetilde{H}(I_i^{k},I_j^{k}) - H(I_i^{k},I_j^{k})\| \le \varepsilon \|H(I_i^{k},I_j^{k})\|$, where $0\le\varepsilon<1$.
\end{definition}

It is clear from Definition~\ref{def:hodlr-eps} that every $(\mathcal{T}_{\ell}, p, \varepsilon)$-HODLR matrix is associated with a $(\mathcal{T}_{\ell}, p)$-HODLR matrix and the two matrices are identical when $\varepsilon=0$. 
However, for a given 
$(\mathcal{T}_{\ell}, p)$-HODLR matrix $H$, the 
associated $(\mathcal{T}_{\ell}, p, \varepsilon)$-HODLR matrix $\widetilde{H}$ can allow, in some levels, off-diagonal blocks of rank exceeding $p$, and, in general,
not much can be said about the rank constraint for these 
off-diagonal blocks except that it now depends somehow on the choice of $\varepsilon$.
It will become clear later that our usage of Definition~\ref{def:hodlr-eps} is for quantifying the error incurred in the low-rank factorization of the 
off-diagonal blocks $\widetilde{H}(I_i^{k},I_j^{k})$ associated with siblings $I_i^{k}$ and $I_j^{k}$ in $\mathcal{T}_{\ell}$, $k = 1, \ldots, \ell$.

Based on Definition~\ref{def:hodlr-eps}, we give a  bound in Lemma~\ref{lemma:approx-diag} on the approximation error in the diagonal blocks $\widetilde{H}_{ii}^{(k)}$, $i=1\colon 2^k$, of the other levels $k=0\colon \ell-1$. Here and in the remainder of the paper we use level $0$ of any HODLR matrix to indicate the HODLR matrix itself, for example, $\widetilde{H}_{11}^{(0)}\equiv \widetilde{H}$.

\begin{lemma}\label{lemma:approx-diag}
Let  $\widetilde{H}$ be a $(\mathcal{T}_{\ell}, p, \varepsilon)$-HODLR matrix associated with $H$. Then for the HODLR matrices $\widetilde{H}^{(k)}_{ii}$, $i=1\colon 2^k$, at level $k \in \{0, \ldots, \ell\}$, 
it holds that 
\begin{equation}\label{eq:approx-diag}
    \|\widetilde{H}^{(k)}_{ii} - {H}^{(k)}_{ii}\|_F \le \varepsilon \|{H}^{(k)}_{ii}\|_F, \quad 
    i=1\colon 2^k.
\end{equation}
\end{lemma}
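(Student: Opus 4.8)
The plan is to prove \eqref{eq:approx-diag} by (downward) induction on the level $k$, from $k=\ell$ to $k=0$. The one fact that drives everything is additivity of the \emph{squared} Frobenius norm over the blocks of a block matrix,
\[
\Bigl\|\begin{bmatrix}A&B\\ C&D\end{bmatrix}\Bigr\|_F^2 = \|A\|_F^2+\|B\|_F^2+\|C\|_F^2+\|D\|_F^2,
\]
which is precisely why the lemma is stated in the Frobenius norm rather than in a general consistent norm.

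For the base case $k=\ell$, the blocks $\widetilde{H}_{ii}^{(\ell)}$ are the minimal diagonal blocks, which carry no low-rank compression, so $\widetilde{H}_{ii}^{(\ell)}=H_{ii}^{(\ell)}$ and \eqref{eq:approx-diag} holds trivially. For the inductive step, assume \eqref{eq:approx-diag} holds at level $k+1$ for every diagonal block, and fix $i$ with $1\le i\le 2^k$. Write $\widetilde{H}_{ii}^{(k)}$ and $H_{ii}^{(k)}$ in the $2\times2$ block form of \eqref{eq:hodlr_block} and subtract: the diagonal blocks of the difference are $\widetilde{H}_{2i-1,2i-1}^{(k+1)}-H_{2i-1,2i-1}^{(k+1)}$ and $\widetilde{H}_{2i,2i}^{(k+1)}-H_{2i,2i}^{(k+1)}$, and the off-diagonal blocks are the low-rank approximation errors $\widetilde{H}(I_{2i-1}^{k+1},I_{2i}^{k+1})-H(I_{2i-1}^{k+1},I_{2i}^{k+1})$ and $\widetilde{H}(I_{2i}^{k+1},I_{2i-1}^{k+1})-H(I_{2i}^{k+1},I_{2i-1}^{k+1})$. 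Apply block-additivity of $\|\cdot\|_F^2$, then bound the two diagonal terms by the inductive hypothesis and the two off-diagonal terms by Definition~\ref{def:hodlr-eps}; each of the four terms is at most $\varepsilon^2$ times the squared Frobenius norm of the corresponding block of $H_{ii}^{(k)}$. Summing, and invoking block-additivity once more — this time for $H_{ii}^{(k)}$ itself — yields $\|\widetilde{H}_{ii}^{(k)}-H_{ii}^{(k)}\|_F^2\le\varepsilon^2\|H_{ii}^{(k)}\|_F^2$; taking square roots closes the induction.

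The argument is essentially bookkeeping, so I do not expect a genuine obstacle; the only points that need care are (i) that the bound really uses the Pythagorean identity for $\|\cdot\|_F^2$, since with a general consistent norm the per-block estimates would not recombine without an extra factor that grows with the number of off-diagonal blocks (and hence, after the recursion, with $\ell$); and (ii) that the base case relies on the convention that the finest-level diagonal blocks are stored without low-rank approximation — if instead they were perturbed by at most $\varepsilon$ relative to $\|H_{ii}^{(\ell)}\|_F$, the same induction still goes through verbatim.
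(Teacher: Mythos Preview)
Your proof is correct and follows essentially the same route as the paper: downward induction on the level, using additivity of the squared Frobenius norm over the $2\times2$ block partition, with the off-diagonal contributions controlled by Definition~\ref{def:hodlr-eps} and the diagonal ones by the inductive hypothesis. The only cosmetic difference is that the paper takes $k=\ell-1$ as its base case (writing out that step explicitly), whereas you start one level deeper at $k=\ell$ where the statement is trivial; your version is marginally cleaner for that reason.
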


\begin{proof}
The proof is done by induction. 
Following from Definition~\ref{def:hodlr-eps}, at the final level $\ell$ of the cluster tree,  for  $i \neq j$, we have  
\begin{equation*}
\|\widetilde{H}^{(\ell)}_{ij} - {H}^{(\ell)}_{ij}\| \le \varepsilon \|{H}^{(\ell)}_{ij}\|. 
\end{equation*}
Then, for any diagonal block $H^{(\ell-1)}_{ii}, i=1,\ldots, 2^{\ell-1}$ in level $\ell - 1$, we have
\begin{equation*}
\begin{aligned}
\|\widetilde{H}^{(\ell-1)}_{ii} - {H}^{(\ell-1)}_{ii}\|_F^2 
&= \|\widetilde{H}^{(\ell)}_{2i-1,2i} - {H}^{(\ell)}_{2i-1,2i}\|_F^2 + \|\widetilde{H}^{(\ell)}_{2i,2i-1} - {H}^{(\ell)}_{2i,2i-1}\|_F^2  \\ 
&\le \varepsilon^2 (\|{H}^{(\ell)}_{2i-1,2i}\|_F^2 + \|{H}^{(\ell)}_{2i,2i-1}\|_F^2) 
\le \varepsilon^2 \|{H}^{(\ell-1)}_{ii}\|_F^2,
\end{aligned}
\end{equation*}
where we have exploited the fact that the diagonal blocks $H_{ii}^{(\ell)}, i=1\colon 2^\ell$, at the final level in the approximations $\widetilde{H}$ and $H$ are identical (to that of the original matrix $A$) under the hierarchical partitioning~\eqref{eq:hodlr_block}.

For the inductive step, assume the bound~\eqref{eq:approx-diag} holds for level $k$.
In level $k-1$, similarly, for any diagonal block $H^{(k-1)}_{ii}, i=1,\ldots, 2^{k-1}$, we have
\begin{equation*}
\begin{aligned}
\|\widetilde{H}^{(k-1)}_{ii} - {H}^{(k-1)}_{ii}\|_F^2 
\le& 
\|\widetilde{H}^{(k)}_{2i-1,2i} - {H}^{(k)}_{2i-1,2i}\|_F^2 + \|\widetilde{H}^{(k)}_{2i,2i-1} - {H}^{(k)}_{2i,2i-1}\|_F^2  \\
&+ \|\widetilde{H}^{(k)}_{2i-1,2i-1} - {H}^{(k)}_{2i-1,2i-1}\|_F^2 + \|\widetilde{H}^{(k)}_{2i,2i} - {H}^{(k)}_{2i,2i}\|_F^2 \\ 
\le& \varepsilon^2 (\|{H}^{(k)}_{2i-1,2i}\|_F^2 + \|{H}^{(k)}_{2i,2i-1}\|_F^2 + \|{H}^{(k)}_{2i-1,2i-1}\|_F^2 + \|{H}^{(k)}_{2i,2i}\|_F^2)  \\
=& \varepsilon^2 \|{H}^{(k-1)}_{ii}\|_F^2.
\end{aligned}
\end{equation*}
This shows~\eqref{eq:approx-diag} also holds
for level $k-1$ and therefore completes the proof.
\end{proof}


\section{Mixed-precision construction and representation of HODLR matrices}\label{sec:mixhodlr} 

In this section, we discuss how to exploit mixed precision in the representation and computation of HODLR matrices so as to reduce the computational costs and storage requirements. We start with the representation of the HODLR matrix 
utilizing a set of different precisions.

\subsection{Mixed precision HODLR matrix representation}\label{sec.mp-hodlr}
Our aim is to compress the low-rank blocks and represent them in precisions potentially lower than the working precision, hence obtaining a globally mixed-precision representation of a HODLR matrix while keeping the error in the representation at the same level as an unified working-precision representation.

Recall from Definition~\ref{def:hodlr-eps} that
$\widetilde{H}$ denotes a $(\mathcal{T}_{\ell}, p, \varepsilon)$-HODLR matrix associated with $H$.
Let us assume the off-diagonal blocks from the $k$-th level of $\widetilde{H}$, $1 \le k \le \ell$, are compressed in the form 
\begin{equation}\label{eq:H-offdiag-lowrank}
    \widetilde{H}_{i j}^{(k)}= \widetilde{U}_{i}^{(k)} (\widetilde{V}_{j}^{(k)})^T,
\quad |i-j|=1,
\end{equation}
where $\widetilde{U}_{i}^{(k)}\in \mathbb{R}^{n / 2^k \times p}$ has orthonormal columns to precision $u$ and
$\widetilde{V}_{j}^{(k)} \in \mathbb{R}^{n / 2^k \times p}$. 
The framework for generating $\widehat{H}$, the mixed-precision representation of the HODLR matrix, is presented in Algorithm~\ref{alg:mp-hodlr}, where the HODLR matrix $H$ is partitioned recursively with its off-diagonal blocks at different levels factorized by the low-rank compression~\eqref{eq:H-offdiag-lowrank}.
Note that in Line~\ref{algline:mp-HODLR-low-rank-compute-1} of Algorithm~\ref{alg:mp-hodlr}, we only store the generators $\widehat{U}^{(k+1)}_{2i-1}$ and $\widehat{V}^{(k+1)}_{2i}$ instead of $\widehat{H}^{(k+1)}_{2i-1,2i}$ itself in practice (the same applies to Line~\ref{algline:mp-HODLR-low-rank-compute-2} in Algorithm~\ref{alg:mp-hodlr}, and Lines~\ref{algline:adap-HODLR-low-rank-compute-1}--\ref{algline:adap-HODLR-low-rank-compute-2} of Algorithm~\ref{alg:mp-hodlr-adap}).
In addition to the working precision $u$, the algorithm involves $\ell$ other floating-point precisions that are associated with different levels of low-rank compression of the off-diagonal blocks.
For simplicity of exposition, the intermediate outputs $\widetilde{H}^{(k)}_{ij}$, $1 \le k \le \ell$, that are computed in the process, are not presented explicitly. 

\begin{algorithm}
\caption{Mixed-precision HODLR Format ($H;\ \mathcal{T}_{\ell}; \{u, u_1, u_2, \ldots, u_\ell\}$)}
\label{alg:mp-hodlr}
\begin{algorithmic}[1]
 \STATE{$k \leftarrow 0, H\leftarrow H^{(0)}$}
    \WHILE{$k < \ell$}
    \FOR{$i :=1\colon 2^k$}    
    \STATE{Partition $H_{ii}^{(k)}$ by \eqref{eq:hodlr_block}}
    \STATE{$\widehat{H}^{(k+1)}_{2i-1,2i} \leftarrow \widehat{U}^{(k+1)}_{2i-1}(\widehat{V}^{(k+1)}_{2i})^T$ \textcolor{blue}{\small [Compute in $u$, and store $\widehat{U}^{(k+1)}_{2i-1}, \widehat{V}^{(k+1)}_{2i}$ in $u_{k+1}$]}}\label{algline:mp-HODLR-low-rank-compute-1}
    \STATE{$\widehat{H}^{(k+1)}_{2i,2i-1} \leftarrow \widehat{U}^{(k+1)}_{2i} (\widehat{V}^{(k+1)}_{2i-1})^T$ \textcolor{blue} {\small [Compute in $u$, and store $\widehat{U}^{(k+1)}_{2i}, \widehat{V}^{(k+1)}_{2i-1}$ in $u_{k+1}$]}}\label{algline:mp-HODLR-low-rank-compute-2} 
    \ENDFOR
    \STATE{$k \leftarrow k+1$}
    \ENDWHILE
    \FOR{$i :=1\colon 2^{\ell}$}  
    \STATE{$\widehat{H}_{i, i}^{(\ell)} \leftarrow H_{i, i}^{(\ell)}$ \textcolor{blue}{\small [Store $\widehat{H}_{i, i}^{(\ell)}$ in $u$]}}
    \ENDFOR
\RETURN $\widehat{H}$
\end{algorithmic}
\end{algorithm}

Since the approximate ranks of the off-diagonal blocks $\widetilde{H}^{(k)}_{ij}$, $1 \le k \le \ell$ are generally not known a priori, in the actual implementation of Algorithm~\ref{alg:mp-hodlr} the factorizations are typically computed with the threshold $\varepsilon>u$ (since the factorizations are calculated in the working precision $u$) as an input parameter.
Also, we are analysing the case where the factorizations are computed in the working precision $u$, yet our analysis can be easily extended to use a different precision and this will only affect the attainable computational accuracy $\varepsilon$.

Now we state the main result which bounds the global error introduced by the mixed-precision HODLR compression via Algorithm~\ref{alg:mp-hodlr} and hence
indicates how to choose the precisions $u_k,\ k=1\colon \ell$, in order to keep the error at a satisfying level.

\begin{theorem}[Global error in mixed-precision HODLR representation]\label{thm:mpblr-uv}
Let $\widetilde{H}$ be a $(\mathcal{T}_{\ell}, p, \varepsilon)$-HODLR matrix associated with the HODLR matrix $H$, and denote by $\widehat{H}$ the mixed-precision representation produced by Algorithm~\ref{alg:mp-hodlr}. 
Define
$
\xi_k := 
\max_{|i-j|=1}\norm{\widetilde{H}^{(k)}_{ij}}_F/\norm{\widetilde{H}}_F,  1 \le k \le \ell,
$
where $\widetilde{H}^{(k)}_{ij}, |i-j|=1$ denotes any off-diagonal block 
from the $k$-th level. Then it holds that  
\begin{equation}\label{eq:global-err-bnd}
\textstyle
\norm{H-\widehat{H}}_F \lesssim  \left(2\sqrt{2}\big(\sum_{k=1}^{\ell}
2^{k} \xi_k^{2} u_k^2\big)^{\frac{1}{2}} + \varepsilon\right)\norm{H}_F,
\end{equation}
and, furthermore, if the precisions are chosen by 
\begin{equation}\label{eq:prec_choose}
    u_k\leq \varepsilon/ (2^{k/2}\xi_{k}),
\end{equation}

the bound becomes
$
\norm{H-\widehat{H}}_F \lesssim (2\sqrt{2\ell}+1)\varepsilon\norm{H}_F.   
$
\end{theorem}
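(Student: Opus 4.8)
The plan is to track the error contributed by each off-diagonal block separately and then assemble them using the orthogonality of the Frobenius norm across the block partition. The starting point is the triangle inequality $\norm{H-\widehat H}_F \le \norm{H-\widetilde H}_F + \norm{\widetilde H - \widehat H}_F$. The first term is controlled directly by Lemma~\ref{lemma:approx-diag} applied at level $0$, giving $\norm{H-\widetilde H}_F \le \varepsilon\norm{H}_F$, which accounts for the ``$+\varepsilon$'' in \eqref{eq:global-err-bnd} and the ``$+1$'' in the final bound. So the work is all in bounding $\norm{\widetilde H - \widehat H}_F$, the error introduced purely by the mixed-precision low-rank compression in Algorithm~\ref{alg:mp-hodlr}.

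For the second term, I would decompose $\widetilde H - \widehat H$ block by block according to the recursive partition~\eqref{eq:hodlr_block}. Because the diagonal blocks at the finest level are stored exactly and the recursion only ever replaces off-diagonal blocks $\widetilde H^{(k)}_{ij}$ ($|i-j|=1$, $k=1\colon\ell$) by their stored factored versions $\widehat U^{(k)}_i(\widehat V^{(k)}_j)^T$, the Frobenius norm decomposes exactly as
\begin{equation*}
\norm{\widetilde H - \widehat H}_F^2 = \sum_{k=1}^{\ell}\sum_{|i-j|=1}\norm{\widetilde H^{(k)}_{ij} - \widehat U^{(k)}_i(\widehat V^{(k)}_j)^T}_F^2,
\end{equation*}
where for each level $k$ there are $2^k$ such off-diagonal blocks (one per diagonal block of level $k-1$, times two, i.e. $2\cdot 2^{k-1}=2^k$). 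The next step is a per-block rounding error bound: storing the generators $\widehat U^{(k)}_i,\widehat V^{(k)}_j$ in precision $u_k$ and recombining perturbs the product by a relative amount of order $u_k$ (this uses that $\widehat U^{(k)}_i$ has orthonormal columns to precision $u$, so its Frobenius norm is $\sqrt p$ up to $O(u)$, and a standard matrix-multiplication-type bound on $\mathrm{fl}(\widehat U(\widehat V)^T)$ with rounded inputs), yielding something like $\norm{\widetilde H^{(k)}_{ij} - \widehat U^{(k)}_i(\widehat V^{(k)}_j)^T}_F \lesssim c\, u_k \norm{\widetilde H^{(k)}_{ij}}_F$ for a modest constant $c$. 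Substituting $\norm{\widetilde H^{(k)}_{ij}}_F \le \xi_k\norm{\widetilde H}_F$ and $\norm{\widetilde H}_F \le (1+\varepsilon)\norm H_F$, summing the $2^k$ blocks at level $k$, and then taking the square root of the sum over $k$ produces the term $2\sqrt 2\big(\sum_{k=1}^\ell 2^k\xi_k^2 u_k^2\big)^{1/2}\norm H_F$; the constant $2\sqrt2$ presumably bundles the per-block constant $c$ together with the $(1+\varepsilon)$ factor after dropping second-order terms. Finally, plugging the choice $u_k \le \varepsilon/(2^{k/2}\xi_k)$ makes each summand $2^k\xi_k^2 u_k^2 \le \varepsilon^2$, so the sum is at most $\ell\varepsilon^2$, its square root is $\sqrt\ell\,\varepsilon$, and combining with the $\varepsilon$ from the first term gives $\norm{H-\widehat H}_F \lesssim (2\sqrt{2\ell}+1)\varepsilon\norm H_F$.

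The main obstacle I anticipate is the per-block rounding analysis and pinning down the constant $2\sqrt 2$: one has to be careful about what exactly is computed in precision $u$ versus stored in precision $u_k$, whether the perturbations of $\widehat U^{(k)}_i$ and $\widehat V^{(k)}_j$ are bounded relative to their own norms or relative to $\norm{\widetilde H^{(k)}_{ij}}_F$, and how the near-orthonormality of $\widehat U^{(k)}_i$ feeds into converting a relative generator perturbation into a relative block perturbation. There is also a subtlety in that the $\xi_k$ are defined via $\norm{\widetilde H^{(k)}_{ij}}_F$ (the compressed blocks) rather than $\norm{H^{(k)}_{ij}}_F$, so one must make sure the book-keeping is consistent and that the $(1+\varepsilon)$ relating $\norm{\widetilde H}_F$ to $\norm H_F$ is absorbed correctly under the $\lesssim$ convention. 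Everything else — the triangle inequality, the exact Frobenius decomposition over the block partition, the counting of $2^k$ blocks per level, and the substitution of the precision choice — is routine.
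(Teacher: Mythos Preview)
Your proposal is correct and follows essentially the same route as the paper: triangle inequality, Lemma~\ref{lemma:approx-diag} for the $\varepsilon$ term, exact Frobenius decomposition over the $2^k$ off-diagonal blocks per level, a per-block relative rounding bound, the definition of $\xi_k$, and the substitution of~\eqref{eq:prec_choose}. The one place you flag as the ``main obstacle'' --- the per-block bound and the constant $2\sqrt{2}$ --- is precisely where the paper invokes an external result, \cite[Lem.~2.2]{abbg22}, which gives $\|\widetilde H^{(k)}_{ij}-\widehat H^{(k)}_{ij}\|_F \le (2+\sqrt{p}\,u_k)u_k\|\widetilde H^{(k)}_{ij}\|_F$; dropping the higher-order $\sqrt{p}\,u_k^2$ term yields the constant $2$, and the extra $\sqrt{2}$ comes from the block count, exactly as you surmised.
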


\begin{proof}
Applying~\cite[Lem.~2.2]{abbg22} to $\widetilde{H}_{ij}^{(k)}$, $1 \le k \le \ell$, and using the relation 
$\norm{\widetilde{H}^{(k)}_{ij}}_F \le \xi_k \norm{\widetilde{H}}_F$, we obtain
\begin{equation}\label{eq:diff-tilde-hat-offdiag}
\|\widetilde{H}^{(k)}_{i j} - \widehat{H}^{(k)}_{i j}\|_F 
    \le (2+ \sqrt{p} u_k) u_k \|\widetilde{H}^{(k)}_{i j}\|_F
    \le \xi_k (2+ \sqrt{p} u_k) u_k \norm{\widetilde{H}}_F.
\end{equation}
Then it follows
\begin{equation*}
\begin{aligned}
    \norm{\widetilde{H} - \widehat{H}}^2_F &= \textstyle
    \sum_{k=1}^{\ell} \sum_{i=1}^{2^k} 
    \left( 
\|\widetilde{H}_{2i-1,2i}^{(k)} - \widehat{H}_{2i-1,2i}^{(k)}\|^2_F  + \|\widetilde{H}_{2i,2i-1}^{(k)} - \widehat{H}_{2i,2i-1}^{(k)}\|^2_F  \right)\\& \textstyle \le
\left(2 \sum_{k=1}^{\ell}
2^{k} \xi^{2}_k \left( 2+\sqrt{p}u_k\right)^2 u_k^2 \right)\norm{\widetilde{H}}^2_F
\lesssim 8\left(\sum_{k=1}^{\ell}
2^{k} \xi^{2}_k u_k^2 \right)\norm{\widetilde{H}}^2_F,
\end{aligned}
\end{equation*}
where we have ignored higher order terms in $u_k$ and used the fact resulting from Algorithm~\ref{alg:mp-hodlr} 
that the diagonal blocks 
$\widehat{H}_{ii}^{(\ell)}$ and $\widetilde{H}_{ii}^{(\ell)}$,
$i=1\colon 2^\ell$, at the final level are identical to that of $H$.
Then~\eqref{eq:global-err-bnd} straightforwardly follows by substituting the inequality $\normF{\widetilde{H} - H}\le \varepsilon\normF{H}$
from Lemma~\ref{lemma:approx-diag} into the bound
$\|\widehat{H}-H\|_F\le \|\widehat{H}-\widetilde{H}\|_F + \|\widetilde{H}-H\|_F$.

Now if the precisions are set by $u_k\leq \varepsilon/(2^{k/2}\xi_{k})$, 
we have
$\sum_{k=1}^{\ell} 2^{k} \xi^{2}_k u_k^2  \leq  \ell \varepsilon^2$
and the result follows.
\end{proof}

Theorem~\ref{thm:mpblr-uv} illustrates the fact that the global error in the mixed-precision HODLR representation is proportional to the depth $\ell$ of the hierarchical cluster tree, given that the $\varepsilon$ is chosen to balance both the errors from the low-rank approximation to the off-diagonal blocks and the low-precision conversion of the low-rank factors. 
The expression~\eqref{eq:prec_choose} for admissible precision $u_k$ has two important implications.
First, it indicates that $u_k$ depends inversely proportionally on $\xi_k$, which essentially characterizes the relative importance of the off-diagonal blocks in level-$k$ to the whole matrix in terms of magnitude. 
Second, the choice of $u_k$ should also be adaptive to the level index $k$,
to take into account the error introduced by an increased level of the hierarchical cluster tree.

Finally, from~\eqref{eq:prec_choose} we know for two successive precisions $u_k$ and $u_{k-1}$, $u_k\ge u_{k-1}$ holds if 
$$
\xi_{k-1} \ge \sqrt{2}\xi_k, \quad k=2\colon \ell.
$$
Since the size of 
$\widetilde{H}^{(k)}_{ij}$ is half of the size of 
$\widetilde{H}^{(k-1)}_{ij}$, this condition can generally be expected to hold for HODLR matrices that are not too badly-scaled across off-diagonal blocks of different levels.
Therefore, for such matrices we can generally expect the  chosen precisions to satisfy
$u \le u_1 \le u_2 \cdots \le u_\ell$.

Each diagonal block of a HODLR matrix can be formulated as a HODLR matrix itself; we thus have the following corollary of Theorem~\ref{thm:mpblr-uv} that bounds the error on the diagonal blocks of different levels
resulting from the mixed-precision representation.

\begin{corollary}[Local error of mixed-precision HODLR representation]\label{thm:mpblr-uv-local}
Let $\widetilde{H}$ be a $(\mathcal{T}_{\ell}, p, \varepsilon)$-HODLR matrix associated with the HODLR matrix $H$, and let $\widehat{H}$ denote the mixed-precision representation produced by Algorithm~\ref{alg:mp-hodlr}. 
With the same $\xi_k$ defined in Theorem~\ref{thm:mpblr-uv} and 
the precisions $u_{k+\ell'}$, $k=1\colon \ell -\ell'$, chosen by $u_{k+\ell'}\leq \varepsilon/(2^{k/2}\xi_{k+\ell'})$,
each diagonal block $\widehat{H}^{(\ell')}_{ii}$, $i=1\colon 2^{\ell'}$ in the level $\ell'$ of $\widehat{H}$ satisfies, for $\ell'=1\colon \ell$, 

$$
\norm{\widetilde{H}^{(\ell')}_{ii} - \widehat{H}^{(\ell')}_{ii}}_F  
\lesssim 
2\sqrt{2(\ell-\ell')} \varepsilon \norm{H_{ii}^{(\ell')}}_F
\approx 
2\sqrt{2(\ell-\ell')} \varepsilon \norm{\widetilde{H}_{ii}^{(\ell')}}_F
$$
and
$$
\norm{H^{(\ell')}_{ii}-\widehat{H}^{(\ell')}_{ii}}_F \lesssim (2\sqrt{2(\ell-\ell')} + 1)\varepsilon\norm{H_{ii}^{(\ell')}}_F.
$$
\end{corollary}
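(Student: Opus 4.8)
The plan is to exploit the self-similarity of the HODLR format. Each diagonal block $\widetilde{H}^{(\ell')}_{ii}$ is itself a HODLR matrix, generated by the subtree of $\mathcal{T}_{\ell}$ rooted at the node $I_i^{\ell'}$, which is a completely balanced binary tree of depth $\ell-\ell'$; moreover, Algorithm~\ref{alg:mp-hodlr} processes each diagonal block of its recursion independently. Consequently $\widehat{H}^{(\ell')}_{ii}$ is exactly the mixed-precision representation produced by Algorithm~\ref{alg:mp-hodlr} from $H^{(\ell')}_{ii}$ on this subtree, where the low-rank compression at level $k$ of the subtree, $k=1\colon\ell-\ell'$, is carried out in precision $u_{\ell'+k}$. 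So I would prove the corollary by applying Theorem~\ref{thm:mpblr-uv} to this sub-problem.

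First I would check the hypotheses of Theorem~\ref{thm:mpblr-uv}. The off-diagonal blocks of $\widetilde{H}^{(\ell')}_{ii}$ at level $k$ of its subtree are precisely the off-diagonal blocks $\widetilde{H}^{(\ell'+k)}_{pq}$, $|p-q|=1$, of $\widetilde{H}$ that lie inside $\widetilde{H}^{(\ell')}_{ii}$; by Definition~\ref{def:hodlr-eps} each is within relative error $\varepsilon$ of the corresponding rank-at-most-$p$ block of $H$, and the leaf-level diagonal blocks of $\widetilde{H}^{(\ell')}_{ii}$ and $H^{(\ell')}_{ii}$ agree (with those of $A$). Hence $\widetilde{H}^{(\ell')}_{ii}$ is a $(\mathcal{T}_{\ell-\ell'},p,\varepsilon)$-HODLR matrix associated with $H^{(\ell')}_{ii}$; this is, in effect, Lemma~\ref{lemma:approx-diag} applied within the subtree, which also yields $\|\widetilde{H}^{(\ell')}_{ii}-H^{(\ell')}_{ii}\|_F\le\varepsilon\|H^{(\ell')}_{ii}\|_F$, and in particular $\|\widetilde{H}^{(\ell')}_{ii}\|_F\approx\|H^{(\ell')}_{ii}\|_F$ to leading order.

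Next I would invoke Theorem~\ref{thm:mpblr-uv} with $\ell$ replaced by $\ell-\ell'$, with $(H,\widetilde{H},\widehat{H})$ replaced by $(H^{(\ell')}_{ii},\widetilde{H}^{(\ell')}_{ii},\widehat{H}^{(\ell')}_{ii})$, and with the compression precision at level $k$ of the subtree equal to $u_{\ell'+k}$; the ratio at level $k$ is then $\xi'_k:=\max_{|p-q|=1}\|\widetilde{H}^{(\ell'+k)}_{pq}\|_F/\|\widetilde{H}^{(\ell')}_{ii}\|_F$, the maximum running over the off-diagonal blocks inside $\widehat{H}^{(\ell')}_{ii}$. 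Bound~\eqref{eq:global-err-bnd} gives
\[
\|\widetilde{H}^{(\ell')}_{ii}-\widehat{H}^{(\ell')}_{ii}\|_F \;\lesssim\; 2\sqrt{2}\,\Bigl(\textstyle\sum_{k=1}^{\ell-\ell'}2^{k}(\xi'_k)^2u_{\ell'+k}^2\Bigr)^{1/2}\|\widetilde{H}^{(\ell')}_{ii}\|_F ,
\]
and the prescribed precisions --- read as $u_{\ell'+k}\le\varepsilon/(2^{k/2}\xi'_k)$ --- make the sum at most $(\ell-\ell')\varepsilon^2$; replacing $\|\widetilde{H}^{(\ell')}_{ii}\|_F$ by $\|H^{(\ell')}_{ii}\|_F$ to leading order then gives the first displayed estimate. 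The second follows from the triangle inequality $\|H^{(\ell')}_{ii}-\widehat{H}^{(\ell')}_{ii}\|_F\le\|H^{(\ell')}_{ii}-\widetilde{H}^{(\ell')}_{ii}\|_F+\|\widetilde{H}^{(\ell')}_{ii}-\widehat{H}^{(\ell')}_{ii}\|_F$ combined with Lemma~\ref{lemma:approx-diag}.

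The main obstacle I anticipate is making precise the link between the quantities of the sub-problem and those of the full matrix: one must argue that only the off-diagonal blocks contained in $\widehat{H}^{(\ell')}_{ii}$ enter the error, that the local ratio $\xi'_k$ is the quantity actually governing the sub-problem (it satisfies $\xi'_k\|\widetilde{H}^{(\ell')}_{ii}\|_F\le\xi_{\ell'+k}\|\widetilde{H}\|_F$ and coincides with $\xi_{\ell'+k}$ for HODLR matrices that are well balanced across the level-$\ell'$ diagonal blocks), and that accordingly the error localizes to $\|H^{(\ell')}_{ii}\|_F$ rather than $\|H\|_F$. Once this correspondence is pinned down, the remaining work --- dropping the $\sqrt{p}\,u_{\ell'+k}$ and $O(\varepsilon^2)$ terms under the $\lesssim$ convention and using $\|\widetilde{H}^{(\ell')}_{ii}\|_F\approx\|H^{(\ell')}_{ii}\|_F$ --- is routine.
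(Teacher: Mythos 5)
Your proposal is correct and takes essentially the same approach the paper intends: the corollary is stated there without proof, as a direct application of Theorem~\ref{thm:mpblr-uv} to each diagonal block $\widehat{H}^{(\ell')}_{ii}$ viewed as the mixed-precision HODLR representation produced on the depth-$(\ell-\ell')$ subtree with precisions $u_{\ell'+1},\ldots,u_{\ell}$, which is exactly your argument, and the constants ($2\sqrt{2(\ell-\ell')}$ and the triangle inequality with Lemma~\ref{lemma:approx-diag}) come out as claimed. Your remark that the quantity actually governing the sub-problem is the local ratio $\xi'_k$ normalized by $\|\widetilde{H}^{(\ell')}_{ii}\|_F$ --- rather than the global $\xi_{k+\ell'}$ literally named in the hypothesis --- is a genuine point of care that the paper glosses over: under the literal global reading the error bound would localize to $\|H\|_F$ rather than $\|H^{(\ell')}_{ii}\|_F$.
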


\subsection{An adaptive-precision algorithm}
Since $0<\xi_k< 1$ holds for $k=1\colon \ell$ (the upper bound should not be attained for any meaningful HODLR format), 
\new{together with the constrain $\varepsilon>u$, the bound~\eqref{eq:prec_choose} implies that generally no higher-than-working precisions are needed among $u_k$ for a HODLR matrix with mild depth $\ell$, say, $\ell\le 10$ (so $2^{k/2}\le 32$); 
this statement is very pessimistic since the threshold $\varepsilon$ in most cases should be at least several orders of magnitude larger than the working precision $u$. 
Admittedly, the precision $u_k$ can be higher than the working precision $u$ when the maximal level $\ell$ becomes larger than a certain threshold.
In this case, an interesting question that arises naturally is, at which level of a HODLR matrix the working precision (e.g., fp32 or fp64) may be exceeded. However, an answer cannot be provided
without further assumptions on $\xi_k$, or the structure of the underlying matrix and the approximation threshold $\varepsilon$.
} 

\begin{figure}[t]
\centering
\subfigure[saylr3]{
\includegraphics[width=0.35\textwidth]{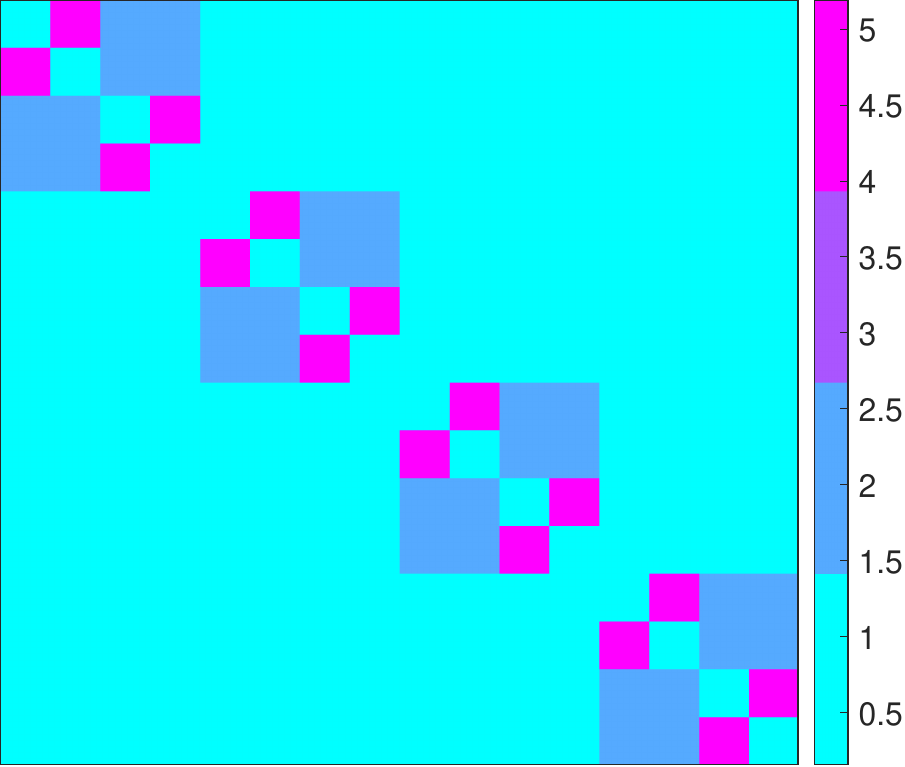}}
\hspace{10pt}
\subfigure[LeGresley\_2508\qquad]{\includegraphics[width=0.364\textwidth]{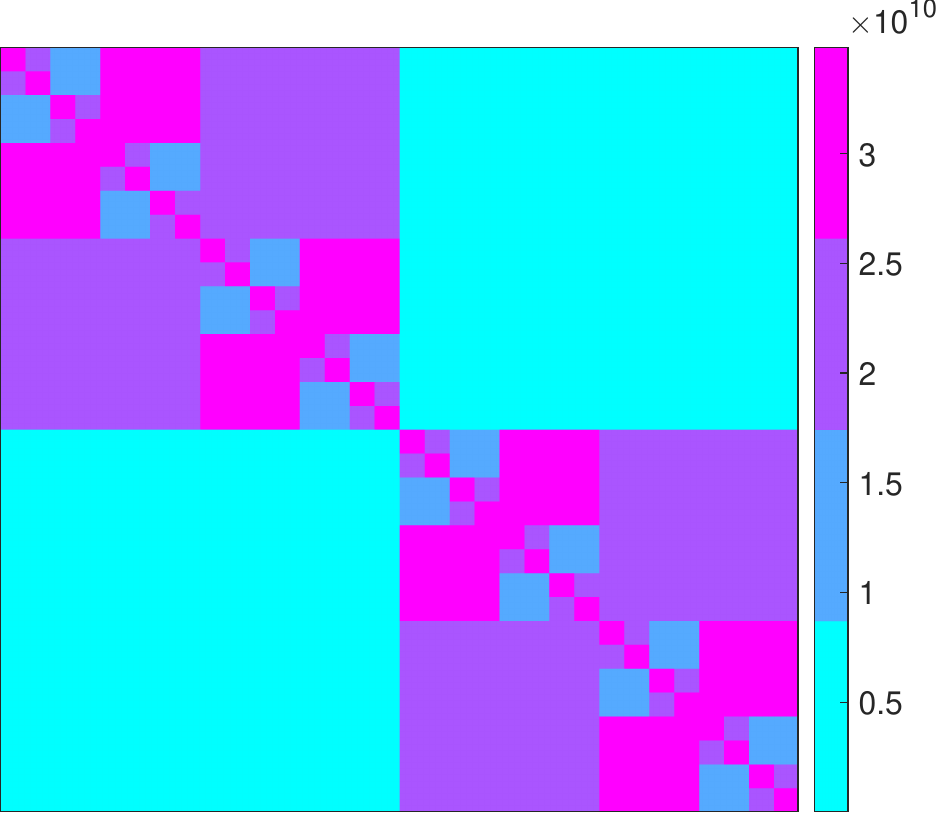}}
\caption{Different matrices often show various norm distributions among layers. We compute the maximum off-diagonal block norm for the HODLR matrix with depth $\ell=6$ and the color indicates the size of the norm. According to the norm, using $\varepsilon=10^{-4}$ and the set of precisions $\{q52, bf16, fp16, fp32, fp64\}$, the algorithm chooses  $\{\text{bf16, fp16, fp16, fp32, fp32, fp32}\}$ for the precision used for each layer for the matrix saylr3 while $\{\text{q52, fp32, fp32, fp32, fp32, fp32}\}$ is chosen for the matrix LeGresley\_2508.}
\label{fig:normdist}
\end{figure}

Based on Theorem~\ref{thm:mpblr-uv}, we design an adaptive-precision HODLR representation scheme, presented in Algorithm~\ref{alg:mp-hodlr-adap}, where we choose $u_k \leq \varepsilon \big/ (2^{k/2}\xi_k)$ so as to achieve 
\begin{equation}\label{eq:error-adap-prec-represent}
\norm{H-\widehat{H}}_F \lesssim (2\sqrt{2\ell} + 1) \varepsilon\norm{H}_F.
\end{equation}
With this choice of $u_k$, we have, from~\eqref{eq:diff-tilde-hat-offdiag}, for $k=1\colon \ell$ and $i\neq j$,
\begin{equation*}
\|\widetilde{H}^{(k)}_{i j} - \widehat{H}^{(k)}_{i j}\|_F 
    \lesssim 2\xi_k u_k \norm{\widetilde{H}}_F
    = 2^{1- k/2}\varepsilon \norm{\widetilde{H}}_F
    \approx 2^{1- k/2}\varepsilon \norm{H}_F,
\end{equation*}
and therefore
\begin{align}\label{eq:error-adap-prec-offdiagbloc}
\norm{H_{ij}^{(k)}-\widehat{H}_{ij}^{(k)}}_F 
&\le \norm{\widetilde{H}_{ij}^{(k)}-\widehat{H}_{ij}^{(k)}}_F +
\norm{H_{ij}^{(k)}-\widetilde{H}_{ij}^{(k)}}_F 
\lesssim (2^{1-k/2} + 1) \varepsilon\norm{H}_F.
\end{align}
Also, from Corollary~\ref{thm:mpblr-uv-local} we have
\begin{equation}\label{eq:error-adap-prec-diagbloc}
\norm{H_{ii}^{(\ell)}-\widehat{H}_{ii}^{(\ell)}}_F \lesssim \varepsilon\norm{H_{ii}^{(\ell)}}_F,\quad 
i=1\colon 2^{\ell},
\end{equation}
which can also be seen from the fact that we are storing these diagonal blocks of level $\ell$ in the working precision $u$, so
$\widetilde{H}^{(\ell)}_{ii} = \widehat{H}^{(\ell)}_{ii}$,  $i=1\colon 2^{\ell}$.

\begin{algorithm}[t]
\caption{Adaptive-precision HODLR Format ($H; \mathcal{T}_{\ell}; u; \varepsilon; \mathcal{U}=\{u_1, \ldots, u_{N}\}$)}
\label{alg:mp-hodlr-adap}
\begin{algorithmic}[1]

\STATE{$k=0, H^{(0)}_{11} \leftarrow H$}
\STATE{\texttt{nrmAll} $\leftarrow$ Compute $\|H\|_F^2$}

\WHILE{$k < \ell$}
\FOR{$i :=1\colon 2^{k}$}    
\STATE{Partition $H_{ii}^{(k)}$ by \eqref{eq:hodlr_block}}
\ENDFOR

\STATE{\texttt{nrm} $\leftarrow$ $\max_{|i-j|=1} \|H_{ij}^{(k+1)}\|_F^2$}

\STATE{$\xi_{k+1} =\sqrt{\texttt{nrm} / \texttt{nrmAll}}$}

\STATE{$u_{k+1}' \leftarrow \varepsilon \big/    (2^{(k+1)/2}\xi_{k+1})$}

\STATE{$u_{k+1} \leftarrow \max\{u^* \le u_{k+1}': u^*\in 
\{u\} \cup \mathcal{U} \}$}

\FOR{$i :=1\colon 2^{k}$}    

\STATE{$\widehat{H}^{(k+1)}_{2i-1,2i} \leftarrow \widehat{U}^{(k+1)}_{2i-1}(\widehat{V}^{(k+1)}_{2i})^T$ \textcolor{blue}{\small [Compute in $u$, and store $\widehat{U}^{(k+1)}_{2i-1}, \widehat{V}^{(k+1)}_{2i}$ in $u_{k+1}$]}} \label{algline:adap-HODLR-low-rank-compute-1}

\STATE{$\widehat{H}^{(k+1)}_{2i,2i-1} \leftarrow \widehat{U}^{(k+1)}_{2i} (\widehat{V}^{(k+1)}_{2i-1})^T$ \textcolor{blue} {\small [Compute in $u$, and store $\widehat{U}^{(k+1)}_{2i}, \widehat{V}^{(k+1)}_{2i-1}$ in $u_{k+1}$]}}  \label{algline:adap-HODLR-low-rank-compute-2}
\ENDFOR
\ENDWHILE
\FOR{$i:=1\colon 2^{\ell}$}  
\STATE{$\widehat{H}_{i, i}^{(\ell)} \leftarrow H_{i, i}^{(\ell)}$ \textcolor{blue}{\small [Store $\widehat{H}_{i, i}^{(\ell)}$ in $u$]}}
\ENDFOR
\RETURN $\widehat{H}$
\end{algorithmic}
\end{algorithm}

In addition to the HODLR matrix $H$ associated with a binary tree 
$\mathcal{T}_{\ell}$, the working precision $u$, and $\varepsilon$, the algorithm also takes a set $\mathcal{U}=\{u_1, u_2, \ldots, u_{N}\}$ of available precisions as the input, so the algorithm will calculate the feasible precision $u_k'$ by~\eqref{eq:prec_choose} and then take the precision $u_k$
from $\mathcal{U}$ which is the closest lower precision to $u_k'$.
Note that the number $N$ of the available precisions is not necessarily equal to $\ell$. 
The algorithm estimates the $\xi_k$ in Theorem~\ref{thm:mpblr-uv}
using the norms of $H$ and its
off-diagonal blocks from level $k$; this only introduces a
relative error of order $\varepsilon$ in the estimation of the norms. An illustrative example is shown in \figurename~\ref{fig:normdist}. 

\new{The norm estimations in general take $O(n^2)$ flops,
but this overhead is reduced when the matrix $H\equiv A$ is in a sparse or structured format. 
This computational cost can also be substantially mitigated by the fact that
the estimations can be rough and be carried out in a precision lower than the working precision
since it is enough to have $u_k$ to the correct order of magnitude. It may also be possible to use randomized techniques; see, e.g., \cite{bukr21}. 
Generally, we assume that the construction of such a mixed-precision HODLR matrix can be done in a precomputation stage, and the format can be reused for multiple subsequent computational kernels such that the computational cost is amortized. 
}

\subsection{Matrix--vector products}
\new{As previously mentioned,}
using the HODLR representation of a matrix $A \in \mathbb{R}^{n \times n}$, denoted by $H$, matrix--vector products can be efficiently performed with complexity $O(p n \log n)$ where $p$ is the maximum rank of all off-diagonal blocks. 
The algorithm for computing the matrix--vector product with a HODLR matrix is described in Algorithm~\ref{alg:matvecprod}.
\new{Note that if the working precision of the computational kernel, e.g., the matrix--vector product and LU factorization, is higher than some storage formats used in the adaptive-precision HODLR matrix, we can still retain memory and communication savings by casting the lower-precision formats to the working precision on the fly during computations.}

\begin{algorithm}[t]
\caption{HODLR\_MATVEC($H; x; \mathcal{T}_{\ell}$)}
\label{alg:matvecprod}
\begin{algorithmic}[1]
\STATE{$b  \leftarrow \mathbf{0} \in \mathbb{R}^{n}$}
\FOR{$k = 1 \colon \ell$}
\STATE{Partition $b$ into $2^k$ blocks, i.e., $(b_{i}^{(k)})_{i=1}^{2^k}$
\STATE{Partition $x$ into $2^k$ blocks, i.e., $(x_{i}^{(k)})_{i=1}^{2^k}$
}
}
\FOR{$i=1\colon 2^{k-1}$}    

\STATE{$b_{2i-1}^{(k)} \leftarrow b_{2i-1}^{(k)} + U^{(k)}_{2i-1}(V^{(k)}_{2i})^Tx_{2i}^{(k)}$ \textcolor{blue}{\small [Compute in precision $u$]}} 
\STATE{$b_{2i}^{(k)} \leftarrow b_{2i}^{(k)} + U^{(k)}_{2i} (V^{(k)}_{2i-1})^Tx_{2i-1}^{(k)}$ \textcolor{blue}{\small [Compute in precision $u$]}} 
\ENDFOR

\ENDFOR
\FOR{$i=1\colon 2^\ell$}  
\STATE{$b_{i}^{(\ell)} \leftarrow b_{i}^{(\ell)} + H^{(\ell)}_{i, i}x_{i}^{(\ell)}$ \textcolor{blue}{\small [Compute in precision $u$]}} 
\ENDFOR
\RETURN $b$
\end{algorithmic}
\end{algorithm}

We first give error bounds on the working precision $u$ so that the backward error in computing the matrix--vector product 
in finite precision does not exceed the error resulting from 
inexact representation of the matrix.
The bounds are applicable to a single matrix that is approximated (possibly via some low-rank truncation and low-precision representation), and then it will be utilized to bound the backward error in the HODLR matrix--vector product.
The key idea is that,
if the HODLR matrix $H$ is approximated by the mixed-precision representation $\widehat{H}$, to calculate the matrix--vector product $b\leftarrow \widehat{H}x$ we should try to balance the errors occurring in the approximation of $\widehat{H}$ and in the finite-precision computation.

\new{In the proofs of the following Lemma~\ref{lemma:msvd} and Lemma~\ref{lemma:mqr}, we assume $\gamma_n\approx nu$, which only holds if $n\ll 1/u$. This can be violated for large-scale problems and low precisions (e.g., for $n\ge 2050$ for fp16 format and for $n\ge 256$ for bf16 format), although this aligns with the usage of these low-precision formats for smaller HODLR blocks.
}

\begin{lemma}\label{lemma:msvd}
Let $A_{p}=X_p \Sigma_p Y_p^T= \sum_{i=1}^p  \sigma_i x_i y_i^T$ be the best rank-$p$ approximation of $A$.

Then the error due to finite precision computation of $\widehat{y} = \mathrm{fl}(A_{p} x)$ will be no larger than the error due to low-rank approximation when the working precision has unit roundoff
$u \le \sigma_{p+1}/(pn \sigma_1)$.

\end{lemma}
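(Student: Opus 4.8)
The plan is to compare two error quantities directly: the low-rank truncation error $\norm{A - A_p}$ and the rounding error $\norm{\widehat{y} - A_p x}$ incurred when forming the product $A_p x$ in precision $u$. For the first quantity, the best rank-$p$ approximation satisfies $\norm{A - A_p}_2 = \sigma_{p+1}$, so the truncation error, relative to a natural reference scale, is governed by $\sigma_{p+1}$. For the rounding error, I would use the standard backward-error model for matrix--vector products: computing $A_p x$ in precision $u$ yields $\widehat{y} = (A_p + \Delta A) x$ with $\norm{\Delta A} \lesssim \gamma_{c} \norm{A_p}$ for a modest constant $c$ depending on how the product is evaluated. Since $A_p = X_p \Sigma_p Y_p^T$ is applied as three factors, the relevant dimension entering $\gamma$ is essentially $n$ (the length of $x$ and the contractions), and the number of such multiplications is $p$; thus the forward error obeys roughly $\norm{\widehat{y} - A_p x} \lesssim p n u\, \norm{A_p}_2 \norm{x}_2 \le p n u\, \sigma_1 \norm{x}_2$, using $\norm{A_p}_2 = \sigma_1$.

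Next I would set the two errors against each other. The truncation error contributes, in the worst case, a perturbation of size $\sigma_{p+1}\norm{x}_2$ to the product; the rounding error contributes at most $p n u\, \sigma_1 \norm{x}_2$. Requiring the latter not to exceed the former gives $p n u\, \sigma_1 \le \sigma_{p+1}$, i.e. $u \le \sigma_{p+1}/(p n \sigma_1)$, which is exactly the claimed condition. So the argument is essentially: (i) invoke $\norm{A - A_p}_2 = \sigma_{p+1}$; (ii) invoke the standard backward-stability result for the product of the three SVD factors against a vector, tracking that the constant is of order $pn$; (iii) equate and rearrange. I would drop second-order terms in $u$ throughout, consistent with the paper's $\lesssim$ convention, and use $\gamma_n \approx nu$ as flagged in the remark preceding the lemma.

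The main obstacle, and the place where some care is needed, is pinning down the precise constant in the backward error so that it matches $pn$ rather than, say, $p^2 n$ or $n^2$. This depends on the order of operations when applying $A_p = X_p(\Sigma_p(Y_p^T x))$: one first forms $Y_p^T x$ (an $n$-to-$p$ contraction, error $\sim \gamma_n$), scales by $\Sigma_p$ (error $\sim u$), then forms $X_p(\cdot)$ (a $p$-to-$n$ expansion, error $\sim \gamma_p$ per output entry). Bounding the composite perturbation in the $2$-norm and using orthonormality of $X_p, Y_p$ (so their $2$-norms are $1$), one gets a constant on the order of $n + p \le 2n$ times $p$ absorbed loosely into $pn$; the factor $p$ can also be read off from the summation $\sum_{i=1}^p \sigma_i x_i y_i^T$ form if the product is instead accumulated as a sum of $p$ rank-one terms. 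I would present the bound in whichever of these two evaluation orders gives the cleanest route to the stated $pn$ constant, remark that the exact constant is not essential (only its order), and note that the reference scale $\sigma_1 \norm{x}_2$ for the relative comparison is what makes the two sides directly comparable.
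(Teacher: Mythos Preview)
Your proposal reaches the right threshold, but by a genuinely different route than the paper. The paper does \emph{not} apply $A_p$ in factored form; it treats $A_p$ as a dense matrix, invokes the standard elementwise bound $|\Delta A_F|\le\gamma_n|A_p|$ for $\widehat y=\mathrm{fl}(A_p x)=(A_p+\Delta A_F)x$, and then converts this to the $2$-norm via~\cite[Lem.~6.6]{high:ASNA2}, obtaining $\normt{\Delta A_F}\le\rank(A_p)\,\gamma_n\,\normt{A_p}=p\gamma_n\sigma_1$. The factor $p$ thus enters through the \emph{rank} of $A_p$ in the elementwise-to-spectral conversion, not through any count of rank-one terms or factor multiplications. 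The comparison is then purely backward: $\normt{\Delta A_F}\le\normt{\Delta A_L}=\sigma_{p+1}$ iff $pnu\sigma_1\le\sigma_{p+1}$, with no reference to $\normt{x}$ at all.

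Your factored-application idea works, but note that of your two suggested evaluation orders only the rank-one-sum form $\sum_{i=1}^p\sigma_i x_i(y_i^Tx)$ naturally produces the $pn$ constant; the three-factor form $X_p(\Sigma_p(Y_p^Tx))$ actually yields a tighter $O((n{+}p)u)$ bound because the orthonormal factors have unit $2$-norm, so your sentence about ``$n+p\le 2n$ times $p$ absorbed loosely into $pn$'' does not hold up. If you keep your approach, commit to the rank-one-sum accumulation and make the comparison at the backward (matrix-perturbation) level rather than the forward level, which is what the lemma is really asserting and which avoids carrying $\normt{x}$ through the argument.
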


\begin{proof}
Since $A_p$ is the best rank-$p$ approximation to $A$, we have
$\sigma_{p+1} =\normt{A - A_{p}} \equiv \normt{\Delta A_L}$, 
where $\Delta A_L$ represents the error caused by low-rank approximation of $A$.

By standard rounding error analysis~\cite[sect.~3.5]{high:ASNA2}
\begin{equation*}
    \widehat{y} = \mathrm{fl}(A_{p} x) = (A_{p} + \Delta A_F) x, \quad 
    |\Delta A_F| \le \gamma_n |A_{p}|,
\end{equation*}
where $\Delta A_F$ represents the error due to finite precision computation. Specified for the $2$-norm, this elementwise bound becomes~\cite[Lem.~6.6]{high:ASNA2}
$\normt{\Delta A_F} \le \rank(A_{p})\gamma_n \normt{A_{p}}$.
Together, we have
\begin{equation*}
\begin{aligned}
\widehat{y} = \mathrm{fl}(A_{p} x) = (A_{p} + \Delta A_F)x = (A - \Delta A_L + \Delta A_F) x \equiv (A + \Delta A)x,
\end{aligned}
\end{equation*}
where $\Delta A =  \Delta A_F - \Delta A_L$, and hence
$\normt{\Delta A}\le \normt{\Delta A_F} + \normt{\Delta A_L}$.

Taking the approximation $\gamma_n \approx nu$, we have
\begin{equation*}
    \normt{\Delta A_F} - \normt{\Delta A_L} 
    \le pnu\normt{A_{p}} - \normt{\Delta A_L}  
    = pnu \sigma_1 - \sigma_{p+1},
\end{equation*}
It follows, when $u \le \sigma_{p+1}/(pn \sigma_1)$,
the error due to finite precision computation will be no larger than the error due to low-rank approximation, i.e.,
$\normt{\Delta A_F} \le \normt{\Delta A_L}$.
\end{proof}

As long as $u$ is below the threshold given in Lemma~\ref{lemma:msvd}, the error due to finite precision computation will not significantly affect the accuracy of the matrix--vector product relative to the accuracy lost due to intentional approximation. Intuitively, this means that the more inexact the low-rank representation, 
the lower the precision we can safely use.

In practice, the approximation $A_p$ in Lemma~\ref{lemma:msvd} is usually obtained via a truncated SVD that is not constrained by the rank of $A_p$ but by a truncation threshold parameter $\eta>0$ such that
$\|A - A_{p}\|_2 \le \eta$. In this case, the rank of $A_p$, which we denote by $p(\eta)$, is not necessarily $p$ but depends on $\eta$.
In the following, we prove in a similar way an analogous result
with the more practical tolerance $\eta$, where the matrix approximant can come from any rank-revealing factorization, not necessarily a truncated SVD. The result is proven in the Frobenius norm to facilitate our use case.

\begin{lemma}\label{lemma:mqr}
Let $\widehat{A}_{p}$ an approximation of $A$ such that $\|A - \widehat{A}_{p}\|_F \approx \eta$ for some $\eta>0$.
Then the error due to finite precision computation of $\widehat{y} = \mathrm{fl}(\widehat{A}_{p} x)$ will be no larger than the error due to the computed inexact representation when the working precision has unit roundoff $u \le \eta/(n\normF{\widehat{A}_{p}})$.
\end{lemma}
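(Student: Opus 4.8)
The plan is to mimic the proof of Lemma~\ref{lemma:msvd} almost verbatim, but with the $2$-norm replaced by the Frobenius norm and the rank-based truncation replaced by the tolerance-based one. First I would set $\Delta A_L := A - \widehat{A}_p$, so that by hypothesis $\normF{\Delta A_L}\approx\eta$; this is the ``error due to the computed inexact representation.'' Next I would apply the standard componentwise rounding error bound for a matrix--vector product~\cite[sect.~3.5]{high:ASNA2}, which gives $\widehat{y}=\mathrm{fl}(\widehat{A}_p x)=(\widehat{A}_p+\Delta A_F)x$ with $|\Delta A_F|\le\gamma_n|\widehat{A}_p|$. Converting this componentwise bound to the Frobenius norm is immediate: $\normF{\Delta A_F}\le\gamma_n\normF{\widehat{A}_p}$, since $\normF{|M|}=\normF{M}$, which is actually cleaner than the $2$-norm version used in Lemma~\ref{lemma:msvd} (no factor of $\rank$ appears). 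Then, as in the previous lemma, I would combine the two error sources, $\widehat{y}=(A-\Delta A_L+\Delta A_F)x\equiv(A+\Delta A)x$, so the total backward error satisfies $\normF{\Delta A}\le\normF{\Delta A_F}+\normF{\Delta A_L}$.

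To finish, I would use the approximation $\gamma_n\approx nu$ (valid when $n\ll 1/u$, as flagged in the paragraph preceding Lemma~\ref{lemma:msvd}) to get $\normF{\Delta A_F}\le nu\normF{\widehat{A}_p}$. Requiring that the finite-precision error not exceed the representation error, i.e.\ $\normF{\Delta A_F}\le\normF{\Delta A_L}\approx\eta$, then reduces to $nu\normF{\widehat{A}_p}\le\eta$, which is exactly the stated condition $u\le\eta/(n\normF{\widehat{A}_p})$. I would state the conclusion in the same form as Lemma~\ref{lemma:msvd}: under this bound on $u$, $\normF{\Delta A_F}\le\normF{\Delta A_L}$, so the rounding error is dominated by the approximation error already present in $\widehat{A}_p$.

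There is essentially no serious obstacle here — the result is a routine Frobenius-norm restatement of Lemma~\ref{lemma:msvd}, and the main point of including it is to have a version phrased in terms of the practical tolerance $\eta$ (rather than $\sigma_{p+1}$) and valid for any rank-revealing factorization, so that it can be fed directly into the subsequent HODLR matrix--vector product analysis where each off-diagonal block $\widehat{H}_{ij}^{(k)}$ plays the role of $\widehat{A}_p$. The only minor point worth stating carefully is that one uses the identity $\normF{|M|}=\normF{M}$ to pass from the componentwise bound to the norm bound, so that no dimension-dependent constant beyond $n$ enters; and that, consistent with the paper's $\lesssim$/$\approx$ conventions, $\normF{A-\widehat{A}_p}\approx\eta$ is treated as an equality for the purpose of the threshold.
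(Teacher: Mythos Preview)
Your proposal is correct and follows essentially the same argument as the paper's own proof: define $\Delta A_L=A-\widehat{A}_p$, apply the componentwise bound $|\Delta A_F|\le\gamma_n|\widehat{A}_p|$, pass to the Frobenius norm via $\normF{|M|}=\normF{M}$ (the paper phrases this as the Frobenius norm being absolute hence monotone, citing~\cite[Thm.~6.2]{high:ASNA2}), use $\gamma_n\approx nu$, and compare $nu\normF{\widehat{A}_p}$ with $\eta$. The only cosmetic difference is that the paper writes the comparison as $\normF{\Delta A_F}-\normF{\Delta A_L}\le nu\normF{\widehat{A}_p}-\eta$ before concluding, whereas you state the inequality directly.
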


\begin{proof}
By defining $\Delta A_L \equiv  A - \widehat{A}_p$, we have
$\|\Delta A_L\|_F \approx \eta$,
where $\Delta A_L$ represents the accuracy lost due to intentional approximation of $A$ and the finite precision computation of this approximation.
On the other hand, we have, by standard rounding error analysis~\cite[sect.~3.5]{high:ASNA2}
\begin{equation*}
    \widehat{y} = \mathrm{fl}(\widehat{A}_{p} x) = (\widehat{A}_{p} + \Delta A_F) x, \quad 
    |\Delta A_F| \le \gamma_n |\widehat{A}_{p}|,
\end{equation*}
where $\Delta A_F$ represents the error due to finite precision computation of the matrix--vector product. The matrix Frobenius norm, which corresponds to the vector $2$-norm on $\mathrm{R}^{n^2}$, is obviously absolute and thus is monotone~\cite[Thm.~6.2]{high:ASNA2}, and so from the
elementwise bound we have
\begin{equation*}
\normF{\Delta A_F} = \normF{|\Delta A_F|}\le \gamma_n \normF{|\widehat{A}_{p}|} = \gamma_n \normF{\widehat{A}_{p}}.
\end{equation*}
Together, we have
\begin{equation*}
\begin{aligned}
\widehat{y} = \mathrm{fl}(\widehat{A}_{p} x) = (\widehat{A}_{p} + \Delta A_F) x = (A - \Delta A_L + \Delta A_F) x \equiv (A + \Delta A)x,
\end{aligned}
\end{equation*}
where $\Delta A =  \Delta A_F - \Delta A_L$, and hence
$\normF{\Delta A}\le \normF{\Delta A_F} + \normF{\Delta A_L}$.

Taking the approximations $\gamma_n \approx nu$, we have
\begin{equation*}
\begin{aligned}
    \normF{\Delta A_F} - \normF{\Delta A_L} 
    & \le nu \normF{\widehat{A}_{p}} - \normF{\Delta A_L}
    \approx nu \normF{\widehat{A}_{p}} - \eta.
\end{aligned}
\end{equation*}
Then it follows that when $u \le \eta/ (n\normF{\widehat{A}_{p}})$,
the error due to the finite precision computation of the matrix--vector product will be no larger than the error due to the computed inexact representation, i.e., 
$\normF{\Delta A_F} \le \normF{\Delta A_L}$.
\end{proof}

Finally, we are ready to present a overall bound for the HODLR matrix--vector product in Algorithm~\ref{alg:matvecprod} with the matrix represented in the mixed-precision HODLR format.

\begin{theorem}\label{thm:mat-vec-bwerr}
    Let $\widetilde{H}$ be a $(\mathcal{T}_{\ell}, p, \varepsilon)$-HODLR matrix associated with the HODLR matrix $H$, and let $\widehat{H}$ denote the mixed-precision representation produced by Algorithm~\ref{alg:mp-hodlr-adap}. 
    If $b = \widehat{H} x$ is computed via Algorithm~\ref{alg:matvecprod} in a working 
    precision $u \le \varepsilon/n$,
    then the computed $\widehat{b}$ satisfies
    \begin{align*}
        \widehat{b} = \mathrm{fl}(\widehat{H} x) = (H + \Delta H) x, \quad 
    \normF{\Delta H} &\le 2(\sqrt{2} + 1) 
    \sqrt{2^{\ell+1}+2^{\ell-1}} \varepsilon \normF{H}. 
    \end{align*} 
\end{theorem}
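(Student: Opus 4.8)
The plan is to express the vector $\widehat b$ returned by Algorithm~\ref{alg:matvecprod} as $\widehat b=(H+\Delta H)x$ for an explicit perturbation $\Delta H$ of the \emph{original} HODLR matrix, and to bound $\normF{\Delta H}$ by working one HODLR block at a time. The key structural fact is that the HODLR partition tiles $\widehat H$ by blocks with pairwise disjoint supports: the $2^k$ low-rank off-diagonal blocks $\widehat H^{(k)}_{ij}$ ($|i-j|=1$) at each level $k=1\colon\ell$, together with the $2^\ell$ exactly-stored diagonal blocks $\widehat H^{(\ell)}_{ii}$, cover every entry of the matrix exactly once, since a position covered by a low-rank block at level $k$ is never refined again. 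Algorithm~\ref{alg:matvecprod} applies each of these blocks to the relevant subvector of $x$ and accumulates the results into $b$, so any component of $b$ receives exactly one contribution per level plus one from the bottom-level diagonal, i.e., at most $\ell+1$ terms.

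First I would treat a single off-diagonal block. Applying the rounding-error analysis of Lemma~\ref{lemma:mqr} to the two-stage product $\widehat U^{(k)}_i\bigl((\widehat V^{(k)}_j)^T x^{(k)}_j\bigr)$, with the role of ``$\eta$'' played by the representation error of that block---which by~\eqref{eq:error-adap-prec-offdiagbloc} satisfies $\normF{H^{(k)}_{ij}-\widehat H^{(k)}_{ij}}\lesssim(2^{1-k/2}+1)\varepsilon\normF{H}=:\eta_k$---gives $\mathrm{fl}(\widehat H^{(k)}_{ij}x^{(k)}_j)=(H^{(k)}_{ij}+\Delta H^{(k)}_{ij})x^{(k)}_j$. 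The hypothesis $u\le\eta_k/\bigl((n/2^k)\normF{\widehat H^{(k)}_{ij}}\bigr)$ of that lemma reduces, via $\eta_k\gtrsim\varepsilon\normF{H}$ and $\normF{\widehat H^{(k)}_{ij}}\lesssim\normF{H}$, to $u\lesssim 2^k\varepsilon/n$, which is implied by $u\le\varepsilon/n$; hence the finite-precision part of $\Delta H^{(k)}_{ij}$ is no larger than the representation part, so $\normF{\Delta H^{(k)}_{ij}}\lesssim 2\eta_k\le 2(\sqrt2+1)\varepsilon\normF{H}$, using $k\ge1$. For a bottom-level diagonal block the representation is exact and the matrix--vector-product error is merely $\lesssim\gamma_{n_{\min}}\normF{H^{(\ell)}_{ii}}\lesssim(\varepsilon/2^\ell)\normF{H^{(\ell)}_{ii}}$, which is negligible. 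Finally, forming the accumulation in Algorithm~\ref{alg:matvecprod} via the standard summation model perturbs each of the $\le\ell+1$ partial results by a relative factor bounded by $\gamma_{\ell+1}\approx(\ell+1)u\le(\ell+1)\varepsilon/n$; since each partial result has norm at most $\normF{H}\,\normt{x}$, this is a higher-order contribution dropped under the $\lesssim$ convention.

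Assembling these pieces, $\Delta H$ inherits the HODLR block structure of $\widehat H$, so disjointness of supports gives that $\normF{\Delta H}^2$ is the sum of the squared block errors, whence $\normF{\Delta H}^2\lesssim\sum_{k=1}^{\ell}2^k\bigl(2(\sqrt2+1)\varepsilon\normF{H}\bigr)^2$ plus the negligible contribution of the $2^\ell$ bottom-level diagonal blocks. Using $\sum_{k=1}^{\ell}2^k=2^{\ell+1}-2\le 2^{\ell+1}$ and absorbing the diagonal term together with the accumulation slack into a $2^{\ell-1}$ budget yields $\normF{\Delta H}^2\lesssim\bigl(2(\sqrt2+1)\bigr)^2(2^{\ell+1}+2^{\ell-1})\varepsilon^2\normF{H}^2$; taking square roots gives the stated bound. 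I expect the main obstacle to be the rigorous justification of the reduction in the previous paragraph: verifying that Lemma~\ref{lemma:mqr}, stated for a generic matrix--vector product, truly transfers to each off-diagonal block under the single hypothesis $u\le\varepsilon/n$ without a level-dependent tightening, given that $\widehat U(\widehat V)^T x$ is computed in two stages and so carries extra $\sqrt p$- and block-dimension-dependent constants beyond the $\gamma_{n/2^k}$ of a dense product. A secondary point is to confirm $\normF{\widehat H}\approx\normF{H}$---which follows from Theorem~\ref{thm:mpblr-uv} since $\widehat H-H$ is $O(\varepsilon)$---so that bounds written in $\normF{\widehat H}$ may be rephrased in $\normF{H}$, and to check that the diagonal contribution and the accumulation error genuinely stay below the $2^{\ell-1}$ slack rather than disturbing the constant $2(\sqrt2+1)$.
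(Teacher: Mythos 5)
Your proposal is correct and follows essentially the same route as the paper's proof: apply Lemma~\ref{lemma:mqr} blockwise to show that $u\le\varepsilon/n$ forces the finite-precision error of each block product below its representation error bound~\eqref{eq:error-adap-prec-offdiagbloc}, then exploit the disjoint supports of the HODLR blocks to sum squared Frobenius norms and recover the constant $2(\sqrt2+1)\sqrt{2^{\ell+1}+2^{\ell-1}}$ (the paper bounds the representation part $H_L$ and the finite-precision part $H_F$ separately and adds the norms, while you combine them blockwise as $2\eta_k$, and you treat the exactly-stored diagonal blocks more tightly than the paper's $\varepsilon\normF{H}$ budget per block --- cosmetic differences only). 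The obstacle you flag, that Lemma~\ref{lemma:mqr} is stated for a dense product whereas Algorithm~\ref{alg:matvecprod} computes $\widehat U\bigl((\widehat V)^Tx\bigr)$ in two stages with possible extra $\sqrt p$-dependent constants, is likewise left unaddressed in the paper's own argument.
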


\begin{proof}
\new{Assuming the representation error in one off-diagonal block $\widehat{H}^{(k)}_{ij}$ of $\widehat{H}$ attains its upper bound~\eqref{eq:error-adap-prec-offdiagbloc}, we know from Lemma~\ref{lemma:mqr} that the error in computing the matrix--vector product associated with $\widehat{H}^{(k)}_{ij}$
will not exceed the error in computing the approximation $\widehat{H}^{(k)}_{ij}$ if the working precision has unit roundoff
    \begin{equation*}
    u \le \frac{\varepsilon}{n} 
    = \frac{\varepsilon\normF{H}}{n\normF{H} } \leq
    \frac{(2^{1-k/2}+1)\varepsilon\normF{H}}
    {n\normF{H^{(k)}_{ij}}} \approx
    \frac{(2^{1-k/2}+1)\varepsilon\normF{H}}
    {n\normF{\widehat{H}^{(k)}_{ij}} }.
\end{equation*}
This means if we choose 
$u \le \varepsilon/n$,
then for \textit{any} off-diagonal block $\widehat{H}^{(k)}_{ij}$, $k=1\colon \ell$, the error in computing the matrix--vector product associated with the matrix
will not exceed the maximum possible error in its computed approximation.} Note that this bound for the working precision holds independent of the vector that is being multiplied.

For the diagonal blocks in level $\ell$, from~\eqref{eq:error-adap-prec-diagbloc} we have the bound
\begin{equation}\label{eq:error-adap-prec-diagbloc-loose} 
\norm{H_{ii}^{(\ell)}-\widehat{H}_{ii}^{(\ell)}}_F \lesssim \varepsilon\norm{H_{ii}^{(\ell)}}_F
\le \varepsilon\norm{H}_F,
\quad i=1\colon 2^{\ell},
\end{equation}
which, by using Lemma~\ref{lemma:mqr}, leads to the same bound $u \le \varepsilon/n$ for the working precision at which the matrix--vector products associated with the diagonal blocks should be performed.

Define ${H}_L = \widehat{H} - H$,
where the off-diagonal blocks of ${H}_L$ are formed by 
$(H_L)_{ij}^{(k)} = \widehat{H}_{ij}^{(k)} - H_{ij}^{(k)}$ for 
$i\ne j$ with $k=1 \colon\ell$ and $i=j$ with $k=\ell$. 
Since each diagonal and off-diagonal block in $\widehat{H}$ will be multiplied by a segment of vector $x$ exactly once in forming $\widehat{H} x$, by ignoring the errors in the summation of the vector elements (which are usually negligible compared with the error in the block matrix--vector products), with $u \le \varepsilon/n$, we have
\begin{equation}\label{eq:matvec-proof-bound}
        \widehat{b} = \mathrm{fl}(\widehat{H} x) 
        = (\widehat{H} + H_F) x 
        = (H + {H}_L + H_F) x, \quad 
    \normF{(H_F)_{ij}^{(k)}} \le  \normF{(H_L)_{ij}^{(k)}},
\end{equation} 
because in this case the backward error in any matrix--vector product performed in forming $\widehat{H} x$ via Algorithm~\ref{alg:matvecprod} is bounded above by the approximation error in the associated matrix.
We are thus interested in a bound for $\normF{{H}_L}^2$, which, by using~\eqref{eq:error-adap-prec-offdiagbloc} and~\eqref{eq:error-adap-prec-diagbloc-loose}, is
\begin{align*}
\normF{{H}_L}^2 &= \textstyle
\sum_{1\le k\le \ell, i\ne j} \normF{\widehat{H}_{ij}^{(k)} - H_{ij}^{(k)}}^2 + \sum_{i=1}^{2^{\ell}} \normF{\widehat{H}_{ii}^{(\ell)} - H_{ii}^{(\ell)}}^2\\
&\lesssim \textstyle
\sum_{k=1}^{\ell} 2^k\left( (2^{1-k/2} + 1) \varepsilon\norm{H}_F
\right)^2  + 2^\ell\cdot \varepsilon^2\norm{H}_F^2 \\
&\le  \textstyle \left(
\sum_{k=1}^{\ell} 2^k(\sqrt{2} + 1)^2  + 2^\ell  \right)\varepsilon^2 \normF{H}^2 
\le (\sqrt{2} + 1)^2 \left( 2^{\ell+1} + 2^{\ell-1}  \right)\varepsilon^2 \normF{H}^2.
\end{align*}
From this bound and~\eqref{eq:matvec-proof-bound} we have
$$
\normF{H_F} \le \normF{H_L} \le 
(\sqrt{2} + 1) \sqrt{2^{\ell+1} + 2^{\ell-1}}
\varepsilon \normF{H},
$$
and therefore the desired backward error bound follows by using $\normF{\Delta H} \le \normF{H_L} + \normF{H_F}$.
\end{proof}

Theorem~\ref{thm:mat-vec-bwerr} gives a bound on the working precision 
$u$ such that the backward error resulting from
computing the matrix--vector product in finite precision does not outweigh the approximation error in the HODLR matrix $\widehat{H}$ that is stored in a mixed-precision fashion. Since the precisions at which different levels of the matrix are stored are carefully chosen to maintain a satisfying normwise error, the overall backward error in the matrix--vector product 
$\widehat{H}x$ is also well bounded, at the level of 
$O(2^{\ell/2}\varepsilon \normF{H}$).

\subsection{LU factorization}
The HODLR format has been widely used for approximating the  discretized operators of differential equations, and the underlying application often involves solving linear systems. This thus involves the computation of the LU factorization of a HODLR matrix followed by forward and backward subsitutions. Alternatively, iterative solvers can also be employed for the linear system, in which case a good preconditioner is often required for fast convergence. Such preconditioners can be obtained also via a HODLR-based LU factorization.
Compared with the traditional dense LU factorization, which requires $O(n^3)$ flops, the HODLR-based LU factorization
can reduce the complexity to approximately no more than $O(p^3n\log n+ p^2 n\log^2n)$~\cite{bakr16}.

In this section we derive error bounds on the LU factorization of $\widehat{H}$, the matrix represented in the mixed-precision HODLR format.
In Algorithm~\ref{alg:hmlu} we describe a recursive algorithm for computing an LU factorization of a HODLR matrix. Each recursive call inside the algorithm essentially consists of four main steps to produce the block LU factorization
\begin{equation*}
    \begin{bmatrix}
       H_{11} &  H_{12} \\
    H_{21} &  H_{22}
    \end{bmatrix} = \begin{bmatrix}
    L_{11} & \\
    L_{21} &  L_{22} 
\end{bmatrix}
\begin{bmatrix}
    U_{11} & U_{12} \\
    &  U_{22} 
\end{bmatrix},
\end{equation*}
where $L_{11}$ and $L_{22}$ are lower triangular and $U_{11}$ and $U_{22}$ are upper triangular.
The algorithm first computes an LU factorization $H_{11}=L_{11}U_{11}$, which is on a HODLR matrix of half the size; Then $U_{12}=L_{11}^{-1}H_{12}$ and 
$L_{21}=H_{21}U_{11}^{-1}$ are solved in a recursive way, which exploit the block structure of the matrices recursively to reduce the problem to the size of the matrices on the lowest level.

Finally, in order to get an LU factortization of the Schur complement $H_{22}-L_{21}U_{12}$, HODLR matrix addition is performed following a $\mathcal{H}$-matrix multiplication~\cite{BORM2003405}, 
where a suitable HODLR rank truncation might be needed for the Schur complement.
Each of these four steps involves subproblems of half the size, and on the bottom-level dense routines, including dense LU factorization and dense triangular solves, are invoked. The procedure is inherently sequential, meaning that $H_{11}$ needs to be factorized
before the Schur complement is able to be handled.
Note that \texttt{HODLR\_LU} has to take a $(\mathcal{T}_{\ell}, p)$-HODLR matrix as input, 
so a rank-$p$ truncation might be required for the matrix $H_{22}^{(k+1)} - L_{21} U_{12}$ in line~\ref{alg.hmlu.line-rank-truncation}
since $\text{rank}(A + B) \le \text{rank}(A) + \text{rank}(B)$ for all $A \in \mathbb{R}^{m \times p}$ and $B \in \mathbb{R}^{p \times n}$. This truncation step will be taken into account in our error analysis \new{afterwards, with the following assumption, which concerns the truncation error incurred in the rank-$p$ truncation for the \textit{computed} Schur complement matrix.}

\begin{assumption}\label{luassum:truncate}
\new{The rank truncation to the \textit{computed} Schur complement $\widetilde{S}^{(k+1)}_{22} := \mathrm{fl}\big(H_{22}^{(k+1)} - L_{21} U_{12}\big)$ in line~\ref{alg.hmlu.line-rank-truncation} of Algorithm~\ref{alg:hmlu} such that the truncated Schur complement $\widehat{S}^{(k+1)}_{22}$ is a $(\mathcal{T}_{\ell}, p)$-HODLR matrix
satisfies, for $0\le k\le \ell -1$,
\begin{equation}\label{eq:LUassmp-truncate}
   \|\widehat{S}^{(k+1)}_{22} - \widetilde{S}^{(k+1)}_{22}\|_F 
   \le \varepsilon \|\widetilde{S}^{(k+1)}_{22}\|_F.
\end{equation}
}
\end{assumption}

\begin{algorithm}[!t]
\caption{HODLR\_LU($H; \mathcal{T}_{\ell}; k=0$)}
\label{alg:hmlu}
\begin{algorithmic}[1]
\IF{$k=\ell$}
\STATE{$L, U \leftarrow$ Compute LU factorization of  $H^{(\ell)}$}\label{alg.hmlu.line-bottom-level-LU}
\RETURN $L, U$
\ELSIF{$k < \ell$}
\STATE{Partition $H^{(k)}$ into $\begin{bmatrix}
    H^{(k+1)}_{11} &  H^{(k+1)}_{12} \\
    H^{(k+1)}_{21} &  H^{(k+1)}_{22}
\end{bmatrix}$   }
\STATE{$L_{11}, U_{11} \leftarrow \texttt{HODLR\_LU}(H_{11}^{(k+1)}, \mathcal{T}_{\ell}, k+1)$}
\STATE{$U_{12} \leftarrow \text{Solve triangular system } L_{11}U_{12} =  H^{(k+1)}_{12}$ \textcolor{blue}{\small [Compute in precision $u$]} } \label{alg.hmlu.tri.solve.1}
\STATE{$L_{21} \leftarrow \text{Solve triangular system } L_{21} U_{11} = H^{(k+1)}_{21}$ \textcolor{blue}{\small [Compute in precision $u$]}} \label{alg.hmlu.tri.solve.2}
\STATE{$H^{\varepsilon}_{22} \leftarrow H_{22}^{(k+1)}  -L_{21} U_{12}$ \textcolor{blue}{\small [Compute in precision $u$]}}\label{alg.hmlu.line-rank-truncation}
\STATE{$L_{22}, U_{22} \leftarrow \texttt{HODLR\_LU}(H^{\varepsilon}_{22}, k+1)$}
\ENDIF
\STATE{$L \leftarrow \begin{bmatrix}
    L_{11} & \\
    L_{21} &  L_{22} 
\end{bmatrix}$, $U \leftarrow \begin{bmatrix}
    U_{11} & U_{12} \\
    &  U_{22} 
\end{bmatrix}$}
\RETURN $L, U$
\end{algorithmic}
\end{algorithm}

\new{To investigate the backward error of the LU factorization of the mixed-precision HODLR matrix $\widehat{H}$, we also introduce the following two assumptions for the analysis of the triangular solvers in lines~\ref{alg.hmlu.tri.solve.1}--\ref{alg.hmlu.tri.solve.2} in Algorithm~\ref{alg:hmlu}, which are based on the results from~\cite[sect. 3.5]{high:ASNA2} and~\cite[Thm. 8.5]{high:ASNA2}.}

\begin{assumption}\label{luassum:1}
The computed approximation $\widehat{C}$ to $C= A_H B_H$,  where $A_H, B_H$ are HODLR matrices for $A \in \mathbb{R}^{n \times n}$ and  $B \in \mathbb{R}^{n \times n}$, respectively, satisfies
\begin{equation}\label{eq:LUassmp-matprod2}
    \widehat{C} = A_H B_H + \Delta C, \qquad \|\Delta C\|_F \le h(n)u \|A\|_F \|B\|_F,
\end{equation}
\end{assumption}
where $h(n)$ is a constant related to $n$.

\begin{assumption}\label{luassum:2}
The computed solution $\widehat{X}$ to the triangular systems $T_H X = B$, where $T_H$ is a HODLR format of $T \in \mathbb{R}^{n \times n}$ and $B \in \mathbb{R}^{n \times n}$, satisfies
\begin{equation}\label{eq:LUassmp-trisolve2}
    T_H \widehat{X} = B + \Delta B, \qquad \|\Delta B\|_F \le f(n) u\|T\|_F \|\widehat{X}\|_F, 
\end{equation}
where $f(n)$ is a constant related to $n$.
\end{assumption}

Note that the constants $h(n)$ and $f(n)$ in~\eqref{eq:LUassmp-matprod2} and~\eqref{eq:LUassmp-trisolve2} 
satisfy $h(n)u=\gamma_n$ and $f(n)u=\gamma_n$
if all the matrices involved are dense.
These constants are introduced to
characterize the error occurring in the \textit{hierarchically-blocked} matrix multiplication and left- and right triangular divisions, 
in which case the constants $h(n)$ and $f(n)$ are expected to be smaller than $\gamma_n / u$.
With these assumptions made, we are now ready to look at the backward error in the LU factorization of the HODLR matrices at level $\ell-1$. Note that the following analysis also takes into account the approximation error in $\widehat{H}$, so the backward error is given with respect to the original $H$ matrix.

\begin{lemma}\label{thm:luhodlr-ell-1}
Let $\widehat{H}$ be the mixed-precision HODLR representation computed via Algorithm~\ref{alg:mp-hodlr-adap}. Under Assumptions~\ref{luassum:1}--\ref{luassum:2}, the LU factors of all HODLR block matrices at level $\ell - 1$ (denoted by $H^{(\ell - 1)}_{ii}, i=1, \ldots, 2^{\ell - 1}$) computed using Algorithm~\ref{alg:hmlu} satisfy
$\widehat{L}_{ii}^{(\ell - 1)} \widehat{U}_{ii}^{(\ell - 1)} = H_{ii}^{(\ell - 1)} + \Delta H_{ii}^{(\ell - 1)}$, where
\begin{equation*}
\|\Delta H_{i i}^{(\ell - 1)}\|_F \le 
u \| H^{(\ell - 1)}_{ii}\|_F  + 
c_1(n, \ell, u, \varepsilon) \|\widehat{L}^{(\ell - 1)}_{ii}\|_F
\|\widehat{U}^{(\ell - 1)}_{ii}\|_F + O(u^2),
\end{equation*} where 
$c_1(n, \ell, u, \varepsilon)= 2\gamma_{n/2^{\ell}}+
(1 + h(n/2^{\ell}))u
+ 2\rho(n,\ell,u,\varepsilon)$,
and where 
$\rho(n,\ell,u,\varepsilon) = (2^{1-\ell/2} + 1)\varepsilon  + f(n/2^\ell) u$.
\end{lemma}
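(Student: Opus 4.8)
The plan is to unfold a single level of the recursion in Algorithm~\ref{alg:hmlu} applied to $\widehat H^{(\ell-1)}_{ii}$, track the rounding and representation errors through its four substeps, and then reassemble the $2\times 2$ block product $\widehat L^{(\ell-1)}_{ii}\widehat U^{(\ell-1)}_{ii}$. The matrix $\widehat H^{(\ell-1)}_{ii}$ is a $2\times 2$ block matrix whose blocks have size $m:=n/2^\ell$: its diagonal blocks are the leaf blocks $H^{(\ell)}_{2i-1,2i-1}$ and $H^{(\ell)}_{2i,2i}$ stored to precision $u$ (hence differing from the exact leaf blocks by at most $u$ relatively), and its off-diagonal blocks $\widehat H^{(\ell)}_{2i-1,2i}$, $\widehat H^{(\ell)}_{2i,2i-1}$ are the stored low-rank factors, whose representation error is controlled by~\eqref{eq:error-adap-prec-offdiagbloc} with $k=\ell$, i.e.\ at most $(2^{1-\ell/2}+1)\varepsilon\|H\|_F$. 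Because the recursion reaches the bottom level at $k=\ell$, both inner \texttt{HODLR\_LU} calls degenerate into dense LU factorizations of $m\times m$ matrices, and the two triangular solves and the Schur-complement formation are dense (or low-rank) operations of that size; no rank truncation is involved, so Assumption~\ref{luassum:truncate} is not needed here.

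I would then collect the error contributions, all in the Frobenius norm. (i) The dense LU of the $(1,1)$ leaf block gives $\widehat L_{11}\widehat U_{11}=H^{(\ell)}_{2i-1,2i-1}+E_{11}$ with $\|E_{11}\|_F\le\gamma_{m}\|\widehat L_{11}\|_F\|\widehat U_{11}\|_F$ (up to leaf-storage rounding), using the componentwise LU bound together with $\||L|\,|U|\|_F\le\|L\|_F\|U\|_F$. (ii) The two triangular solves in lines~\ref{alg.hmlu.tri.solve.1}--\ref{alg.hmlu.tri.solve.2}, analysed via Assumption~\ref{luassum:2}, give $\widehat L_{11}\widehat U_{12}=\widehat H^{(\ell)}_{2i-1,2i}+E_{12}$ and $\widehat L_{21}\widehat U_{11}=\widehat H^{(\ell)}_{2i,2i-1}+E_{21}$ with error bounded by $f(m)u$ times the relevant factor norms; combined with the off-diagonal representation error from~\eqref{eq:error-adap-prec-offdiagbloc}, this produces the quantity $\rho=(2^{1-\ell/2}+1)\varepsilon+f(m)u$, which enters $c_1$ doubled (one off-diagonal block each). (iii) Forming the Schur complement $\widehat S=\mathrm{fl}\big(H^{(\ell)}_{2i,2i}-\widehat L_{21}\widehat U_{12}\big)$ contributes, through Assumption~\ref{luassum:1} for the product and the rounding of the subtraction, an error $\le(1+h(m))u\,\|\widehat L_{21}\|_F\|\widehat U_{12}\|_F+u\|H^{(\ell)}_{2i,2i}\|_F+O(u^2)$; the first piece gives the $(1+h(m))u$ term of $c_1$, and the second, bounded by $u\|H^{(\ell-1)}_{ii}\|_F$ (alongside the analogous leaf-storage errors in the diagonal blocks), gives the standalone $u\|H^{(\ell-1)}_{ii}\|_F$ term. (iv) The dense LU of $\widehat S$ gives $\widehat L_{22}\widehat U_{22}=\widehat S+E_{22}$ with $\|E_{22}\|_F\le\gamma_{m}\|\widehat L_{22}\|_F\|\widehat U_{22}\|_F$, supplying the second $\gamma_m$.

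Reassembling, the four blocks of $\widehat L^{(\ell-1)}_{ii}\widehat U^{(\ell-1)}_{ii}-H^{(\ell-1)}_{ii}$ are (up to leaf-storage errors) $E_{11}$, $\big(\widehat H^{(\ell)}_{2i-1,2i}-H^{(\ell)}_{2i-1,2i}\big)+E_{12}$, $\big(\widehat H^{(\ell)}_{2i,2i-1}-H^{(\ell)}_{2i,2i-1}\big)+E_{21}$, and $\Delta S+E_{22}$, since the term $\widehat L_{21}\widehat U_{12}$ cancels in the $(2,2)$ block. Bounding $\|\Delta H^{(\ell-1)}_{ii}\|_F$ by the sum of the four block norms, replacing each of $\|\widehat L_{11}\|_F,\|\widehat U_{11}\|_F,\|\widehat L_{21}\|_F,\|\widehat U_{12}\|_F,\|\widehat L_{22}\|_F,\|\widehat U_{22}\|_F$ by the corresponding full-factor norm $\|\widehat L^{(\ell-1)}_{ii}\|_F$ or $\|\widehat U^{(\ell-1)}_{ii}\|_F$, and discarding products of two first-order quantities into $O(u^2)$ (treating $\varepsilon$ as of the same order as $u$, so that $\varepsilon^2$ and $\varepsilon u$ are also absorbed), collects exactly into $u\|H^{(\ell-1)}_{ii}\|_F+\big(2\gamma_{m}+(1+h(m))u+2\rho\big)\|\widehat L^{(\ell-1)}_{ii}\|_F\|\widehat U^{(\ell-1)}_{ii}\|_F+O(u^2)$.

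The main obstacle is the propagation of error through the Schur-complement chain feeding the $(2,2)$ block: $\widehat L_{21}$ and $\widehat U_{12}$ already carry the representation errors of $\widehat H^{(\ell)}_{2i,2i-1}$, $\widehat H^{(\ell)}_{2i-1,2i}$ and the triangular-solve errors, and these then propagate through the rounded product, the rounded subtraction, and finally the dense LU of $\widehat S$; one has to keep only first-order terms and bound the intermediate quantities $\|\widehat L_{21}\|_F$, $\|\widehat U_{12}\|_F$, $\|\widehat S\|_F$ by the full-factor norms of $\widehat L^{(\ell-1)}_{ii}$, $\widehat U^{(\ell-1)}_{ii}$. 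A secondary technical point is to re-express the off-diagonal representation-error contribution, which is naturally bounded against $\|H\|_F$ (or the local block $\|H^{(\ell-1)}_{ii}\|_F$), against the factor-norm product, which follows from $\|H^{(\ell-1)}_{ii}\|_F\le\|\widehat L^{(\ell-1)}_{ii}\|_F\|\widehat U^{(\ell-1)}_{ii}\|_F+O(u)$ up to the same discarded order.
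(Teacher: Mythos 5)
Your proposal is correct and follows essentially the same route as the paper's proof: the same four-block decomposition, the same combination of the representation error with Assumption~\ref{luassum:2} to produce $\rho$, Assumption~\ref{luassum:1} plus the rounded subtraction for the Schur complement, the dense LU bounds supplying the two $\gamma_{n/2^{\ell}}$ terms, and the final weakening of the block factor norms to $\|\widehat{L}^{(\ell-1)}_{ii}\|_F\|\widehat{U}^{(\ell-1)}_{ii}\|_F$. The only (negligible) difference is that the paper's level-$\ell$ diagonal blocks are stored exactly in the working precision, so there is no separate leaf-storage error and the standalone $u\|H^{(\ell-1)}_{ii}\|_F$ term arises solely from the rounded subtraction in forming the Schur complement.
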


\begin{proof}
From \eqref{eq:LUassmp-trisolve2} we have,
for $k=1\colon\ell$,
\begin{align*}
  T \widehat{X} &= \widehat{H}_{ij}^{(k)} + \Delta_F^{(k)}, \qquad \|\Delta_F^{(k)}\|_F \le  f(n/2^k) u \|T\|_F \|\widehat{X}\|_F. \\
  &\equiv H_{ij}^{(k)} + \Delta_L^{(k)} + \Delta_F^{(k)} \equiv 
  H_{ij}^{(k)} + \Delta H_{ij}^{(k)},
\end{align*}
where, by using~\eqref{eq:error-adap-prec-offdiagbloc} and ignoring higher order terms in $\varepsilon$ and $u$, 
we have (note that the blocks on level $k$ have size $n/2^{k}$) 
\begin{align}\label{eq:LUassmp-trisolve_lowrank}
    \normF{\Delta H_{ij}^{(k)}} &\le \normF{\Delta_L^{(k)}} + 
    \normF{\Delta_F^{(k)}} 
    \lesssim (2^{1-k/2} + 1) \varepsilon\norm{H_{ij}^{(k)}}_F + 
     f(n/2^k) u\|T\|_F \|\widehat{X}\|_F \nonumber\\
    &\lesssim 
    \left((2^{1-k/2} + 1)\varepsilon  + f(n/2^k) u\right) 
    \|T\|_F \|\widehat{X}\|_F
    =: \rho(n, k, u, \varepsilon)\|T\|_F \|\widehat{X}\|_F.
\end{align}
Similarly, \eqref{eq:LUassmp-trisolve2} also implies, for $k=1\colon\ell$,
\begin{equation}\label{eq:LUassmp-trisolve_lowrank2}
    \widehat{X}T  = H_{ij}^{(k)} + \Delta H_{ij}^{(k)},
    \qquad \normF{\Delta H_{ij}^{(k)}}\lesssim
    \rho(n, k, u, \varepsilon)\|T\|_F \|\widehat{X}\|_F.
\end{equation}

Regarding the Schur complement term 
$S^{(\ell)}_{2i} := H_{2i,2i}^{(\ell)} 
- \widehat{L}^{(\ell)}_{2i,2i-1} 
\widehat{U}^{(\ell)}_{2i-1,2i} =:
H_{2i,2i}^{(\ell)} - Z_{2i}^{\ell}$, 
we have
\begin{align}\label{eq:LU-level-ell-1-SchurComple}
\widehat{S}^{(\ell)}_{2i} 
&=  \mathrm{fl}(
H_{2i,2i}^{(\ell)} 
- \widehat{L}^{(\ell)}_{2i,2i-1} 
\widehat{U}^{(\ell)}_{2i-1,2i})
=: H_{2i,2i}^{(\ell)} -  Z_{2i}^{(\ell)} 
+ \Delta Z_{2i}^{(\ell)} + \Delta_{S_{2i}}^{(\ell)}, 
\end{align}
where $\Delta Z_{2i}^{(\ell)}$ denotes the error occurring in forming the matrix product $\widehat{L}^{(\ell)}_{2i,2i-1} \widehat{U}^{(\ell)}_{2i-1,2i}$ and $\Delta_{S_{2i}}^{(\ell)}$ denotes the error resulting from the subsequent matrix summations. Using the assumption~\eqref{eq:LUassmp-matprod2} and ignoring higher order terms, we have 
\begin{equation}\label{eq:LU-level-ell-1-SchurComple-bnd-prod}
    \normF{\Delta Z_{2i}^{(\ell)}} \le h(n/2^\ell)u \normF{\widehat{L}^{(\ell)}_{2i,2i-1}} \normF{\widehat{U}^{(\ell)}_{2i-1,2i}}
\end{equation} 
and
\begin{equation}\label{eq:LU-level-ell-1-SchurComple-bnd-sum}
    \normF{\Delta_{S_{2i}}^{(\ell)}} \le 
    u (\normF{H_{2i,2i}^{(\ell)}} + \normF{Z_{2i}^{(\ell)}})
    \le u (\normF{H_{2i,2i}^{(\ell)}} + \normF{\widehat{L}^{(\ell)}_{2i,2i-1}} \normF{\widehat{U}^{(\ell)}_{2i-1,2i}}).
\end{equation}
On the other hand, from~\cite[Thm.~9.3]{high:ASNA2}, the computed LU factorization of the computed Schur complement $\widehat{S}^{(\ell)}_{2i}$ satisfies
\begin{equation}\label{eq:LU-level-ell-1-SchurComple-bnd-lu}
\widehat{L}^{(\ell)}_{2i,2i}\widehat{U}^{(\ell)}_{2i,2i} =
    \widehat{S}^{(\ell)}_{2i} + \Delta F^{(\ell)}_{2i}, \qquad \|\Delta F^{(\ell)}_{2i}\|_F \le 
    \gamma_{n/2^{\ell}} 
    \|\widehat{L}^{(\ell)}_{2i,2i}\|_F
    \|\widehat{U}^{(\ell)}_{2i,2i}\|_F.
\end{equation}
Combining~\eqref{eq:LU-level-ell-1-SchurComple}--\eqref{eq:LU-level-ell-1-SchurComple-bnd-lu}, we arrive at
\begin{align}\label{eq:LU-level-ell-1-SchurComple-bnd}
\widehat{L}^{(\ell)}_{2i,2i}\widehat{U}^{(\ell)}_{2i,2i}+
\widehat{L}^{(\ell)}_{2i,2i-1} 
\widehat{U}^{(\ell)}_{2i-1,2i} &=
    H_{2i,2i}^{(\ell)} 
+ \Delta Z_{2i}^{(\ell)} + \Delta_{S_{2i}}^{(\ell)} + \Delta F^{(\ell)}_{2i} =: H_{2i,2i}^{(\ell)} 
+ \Delta H_{2i,2i}^{(\ell)},
\end{align}
where
\begin{multline*}
\normF{\Delta H_{2i,2i}^{(\ell)}}  \le
\normF{\Delta Z_{2i}^{(\ell)}} + \normF{\Delta_{S_{2i}}^{(\ell)}} + 
\normF{\Delta F^{(\ell)}_{2i}} \\
\qquad \quad 
\le u\|H_{2i,2i}^{(\ell)}\|_F + (1 + h(n/2^{\ell}) )u   \|\widehat{L}^{(\ell)}_{2i,2i-1}\|_F
\|\widehat{U}^{(\ell)}_{2i-1,2i}\|_F 
+ \gamma_{n/2^{\ell}} \|\widehat{L}^{(\ell)}_{2i,2i}\|_F
\|\widehat{U}^{(\ell)}_{2i,2i}\|_F.
\end{multline*}
   
Note the matrix $\widehat{H}_{ii}^{(\ell-1)}$, $i=1,\ldots, 2^{\ell-1}$, on level $\ell-1$ is partitioned the same way as in~\eqref{eq:hodlr_block}.
Using~\cite[Thm. 9.3]{high:ASNA2} and combining~\eqref{eq:LUassmp-trisolve_lowrank}--\eqref{eq:LUassmp-trisolve_lowrank2} and~\eqref{eq:LU-level-ell-1-SchurComple-bnd} on the block LU factorization
\begin{equation*}
H^{(\ell-1)}_{ii} + \Delta H^{(\ell-1)}_{ii} =
\widehat{L}^{(\ell-1)}_{ii} \widehat{U}^{(\ell-1)}_{ii} \equiv 
\begin{bmatrix}
    \widehat{L}^{(\ell)}_{2i-1,2i-1}  &  \\  \widehat{L}^{(\ell)}_{2i,2i-1}  & \widehat{L}^{(\ell)}_{2i,2i}
\end{bmatrix}
\begin{bmatrix}
    \widehat{U}^{(\ell)}_{2i-1,2i-1}  & \widehat{U}^{(\ell)}_{2i-1,2i} \\  
    & \widehat{U}^{(\ell)}_{2i,2i}
\end{bmatrix},
\end{equation*}

we arrive at 
\begin{equation*}
\begin{aligned}
\|\Delta H^{(\ell - 1)}_{ii}\|_F \le& 
\|\Delta H^{(\ell)}_{2i-1,2i-1}\|_F + \|\Delta \widehat{H}^{(\ell)}_{2i-1,2i}\|_F + \|\Delta \widehat{H}^{(\ell)}_{2i,2i-1}\|_F + \|\Delta H^{(\ell)}_{2i,2i}\|_F \\
\le& u\|H_{2i,2i}^{(\ell)}\|_F + \gamma_{n/2^{\ell}} \|\widehat{L}^{(\ell)}_{2i-1,2i-1}\|_F\|\widehat{U}^{(\ell)}_{2i-1,2i-1}\|_F \\
&+ \rho(n,\ell,u,\varepsilon)
\big(\|\widehat{L}^{(\ell)}_{2i-1,2i-1}\|_F\|\widehat{U}^{(\ell)}_{2i-1,2i}\|_F 
+ \|\widehat{L}^{(\ell)}_{2i,2i-1}\|_F
\|\widehat{U}^{(\ell)}_{2i-1,2i-1}\|_F\big)  \\
&+ (1 + h(n/2^{\ell}))u  \|\widehat{L}^{(\ell)}_{2i,2i-1}\|_F
\|\widehat{U}^{(\ell)}_{2i-1,2i}\|_F 
+ \gamma_{n/2^{\ell}} \|\widehat{L}^{(\ell)}_{2i,2i}\|_F
\|\widehat{U}^{(\ell)}_{2i,2i}\|_F.
\end{aligned}
\end{equation*}
This bound can be weakened and expressed together as 

\begin{equation*}
 \|\Delta H^{(\ell - 1)}_{ii}\|_F   \le 
 u \| H^{(\ell - 1)}_{ii}\|_F +
 c_1(n, \ell, u, \varepsilon) 
\|\widehat{L}^{(\ell-1)}_{ii}\|_F
\|\widehat{U}^{(\ell-1)}_{ii}\|_F,  
\end{equation*}
where $c_1(n, \ell, u, \varepsilon)= 2\gamma_{n/2^{\ell}}+
(1 + h(n/2^{\ell}))u
+ 2\rho(n,\ell,u,\varepsilon)$.
\end{proof}

Now we develop error bounds for the LU factorization of HODLR matrices at arbitrary levels. This will generalize the result of~Lemma~\ref{thm:luhodlr-ell-1} for the penultimate level and necessarily weaken the bound. The major difference in the generalization is that block LU instead of dense LU will be invoked, as presented in Algorithm~\ref{alg:hmlu}.
In general, the quantities $U_{12}$ and $L_{21}$ returned by  Algorithm~\ref{alg:hmlu} can be dense or in HODLR format, and we consider the latter by using Assumptions~\ref{luassum:1}--\ref{luassum:2}.

\begin{theorem}[Global error in LU factorization of mixed-precision HODLR matrices]\label{thm:luhodlr}
Let $\widehat{H}$ be the mixed-precision HODLR representation computed via Algorithm~\ref{alg:mp-hodlr-adap}.
In computing the LU decomposition of $\widehat{H}$,
\new{under Assumptions~\ref{luassum:truncate}--\ref{luassum:2},} the LU factors of $(\mathcal{T}_{\ell}, p)$-HODLR matrices of size $n/2^{\ell'} 
 (0 \le \ell' \le \ell-1)$ satisfy 
\begin{equation}\label{eq:lu_hodlr_gen_errbnd}
\begin{aligned}
\widehat{L}^{(\ell')}_{ii} \widehat{U}^{(\ell')}_{ii} &=  H^{(\ell')}_{ii} + \Delta H^{(\ell')}_{ii}, 
\end{aligned}
\end{equation} 
where 
\begin{equation*}
\|\Delta H_{i i}^{(\ell')}\|_F \le
(2^{\ell - \ell'}-1)(2\varepsilon + u)\| H^{(\ell')}_{ii}\|_F  + 
c_2(n, \ell', u, \varepsilon) \|\widehat{L}^{(\ell')}_{ii}\|_F
\|\widehat{U}^{(\ell')}_{ii}\|_F + O(u^2),
\end{equation*} 
and where 
$c_2(n, \ell', u, \varepsilon)=  (2^{\ell - \ell'}-1)(\gamma_{n/2^{\ell'}}+
2\rho(n,\ell',u,\varepsilon)+\varepsilon+ h(n/2^{\ell'}) u)$.
In particular, the LU factorization of the HODLR matrix $\widehat{H}$ (level $0$) satisfies $\widehat{L} \widehat{U} =  H + \Delta H$, where
\begin{equation*}
\|\Delta H \|_F  \le (2^{\ell}-1) (2\varepsilon+u) \| H \|_F  + 
(2^{\ell}-1)(7\varepsilon + \gamma_{n}+2f(n)u +h(n)u)\|\widehat{L}\|_F\|\widehat{U}\|_F+ O(u^2).
\end{equation*}
\end{theorem}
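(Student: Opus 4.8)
The plan is to prove the general bound \eqref{eq:lu_hodlr_gen_errbnd} by downward induction on the level $\ell'$, starting from the penultimate level $\ell-1$, where Lemma~\ref{thm:luhodlr-ell-1} provides the base case (noting $2^{\ell-(\ell-1)}-1 = 1$, which matches the constants $c_1$ and the coefficient $2\varepsilon+u$ of $\|H_{ii}^{(\ell-1)}\|_F$ there, up to the $2\rho$ vs.\ $2\varepsilon$ discrepancy that gets absorbed into $c_2$). For the inductive step, I would consider a HODLR matrix $H_{ii}^{(\ell')}$ at level $\ell'$ and expand its block $2\times 2$ LU factorization exactly as in the proof of Lemma~\ref{thm:luhodlr-ell-1}: the $(1,1)$ block is a level-$(\ell'+1)$ HODLR matrix whose LU factorization is controlled by the inductive hypothesis; the $(1,2)$ and $(2,1)$ blocks are handled by the triangular-solve bounds \eqref{eq:LUassmp-trisolve_lowrank}--\eqref{eq:LUassmp-trisolve_lowrank2} together with the off-diagonal representation error \eqref{eq:error-adap-prec-offdiagbloc}; the Schur complement $\widetilde S_{22}^{(\ell'+1)}=\mathrm{fl}(H_{22}^{(\ell'+1)}-L_{21}U_{12})$ incurs a matrix-product error via Assumption~\ref{luassum:1} and a summation error, then a rank-$p$ truncation error via Assumption~\ref{luassum:truncate}, and finally its own LU factorization (again level $\ell'+1$) contributes by the inductive hypothesis applied to $\widehat S_{22}^{(\ell'+1)}$.

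The key bookkeeping step is to assemble $\Delta H_{ii}^{(\ell')}$ as the sum of four block contributions and track how the two accumulating quantities grow: the coefficient of the ``$\|H\|_F$-type'' term and the coefficient of the ``$\|\widehat L\|_F\|\widehat U\|_F$-type'' term. Going from level $\ell'+1$ to level $\ell'$ roughly doubles the number of level-$(\ell'+1)$ subproblems that feed in (two LU subfactorizations plus the new triangular solves, matrix product, summation, and truncation at level $\ell'$), which is exactly what produces the factor $2^{\ell-\ell'}-1 = 2(2^{\ell-\ell'-1}-1)+1$ in the recursion. I would verify $a_{\ell'} = 2a_{\ell'+1}+1$ for the relevant coefficient sequences, with $a_{\ell-1}=1$, giving $a_{\ell'}=2^{\ell-\ell'}-1$; the new $\varepsilon$ per level enters through the Schur-complement truncation (Assumption~\ref{luassum:truncate}) and through $\rho(n,\ell',u,\varepsilon)$, which is why $c_2$ picks up the extra $\varepsilon$ and the $2\rho$ term, and the $\gamma_{n/2^{\ell'}}+h(n/2^{\ell'})u$ terms come from the dense/blocked LU and the matrix multiplication respectively. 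Throughout I would replace $\|\widehat L_{\bullet}\|_F\|\widehat U_{\bullet}\|_F$ for sub-blocks by the global $\|\widehat L_{ii}^{(\ell')}\|_F\|\widehat U_{ii}^{(\ell')}\|_F$ and $\|H_{\bullet}^{(\ell'+1)}\|_F$ by $\|H_{ii}^{(\ell')}\|_F$ to weaken into the stated clean form, discarding $O(u^2)$ and $O(\varepsilon^2)$ (and $O(\varepsilon u)$ inside $u+\varepsilon u$-type expressions) per the paper's $\lesssim$ convention.

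For the level-$0$ specialization I would set $\ell'=0$, so $2^{\ell-\ell'}-1=2^{\ell}-1$, and simplify $c_2(n,0,u,\varepsilon)$: here $\rho(n,0,u,\varepsilon)=(2^{1}+1)\varepsilon+f(n)u = 3\varepsilon+f(n)u$ (using $2^{1-0/2}=2$), so $2\rho = 6\varepsilon+2f(n)u$, and adding the lone $\varepsilon$ and $\gamma_n+h(n)u$ gives the coefficient $7\varepsilon+\gamma_n+2f(n)u+h(n)u$ as claimed; the $\|H\|_F$-coefficient is $(2^\ell-1)(2\varepsilon+u)$ directly. The main obstacle I anticipate is not any single estimate but the careful verification that the recursion constant is genuinely $2a+1$ and not, say, $2a+2$ or $2a$: one must correctly count which level-$(\ell'+1)$ operations are ``new'' at level $\ell'$ versus already subsumed in the two recursive LU calls, and ensure the $\varepsilon$ contributions from the Schur-complement truncation at each level are added exactly once. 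A secondary subtlety is that the inductive hypothesis is applied to $\widehat S_{22}^{(\ell'+1)}$, whose norm must be related back to $\|H_{ii}^{(\ell')}\|_F$ (and whose LU factors to the global $\widehat L,\widehat U$), using the Schur-complement and truncation bounds together with submultiplicativity — a step where one must be careful that the perturbations do not compound multiplicatively across levels, which is why only first-order terms are retained.
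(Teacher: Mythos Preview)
Your proposal is correct and follows essentially the same route as the paper: downward induction from level $\ell-1$ using Lemma~\ref{thm:luhodlr-ell-1} as base case, block $2\times2$ expansion at each inductive step with the $(1,1)$ and Schur-complement LU factorizations handled by the hypothesis, the off-diagonals by~\eqref{eq:LUassmp-trisolve_lowrank}--\eqref{eq:LUassmp-trisolve_lowrank2}, and the Schur complement assembled from Assumptions~\ref{luassum:truncate}--\ref{luassum:1} plus the summation error, then weakening sub-block norms to $\|\widehat L^{(\ell')}_{ii}\|_F\|\widehat U^{(\ell')}_{ii}\|_F$ and $\|H^{(\ell')}_{ii}\|_F$. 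One minor correction: in Lemma~\ref{thm:luhodlr-ell-1} the coefficient of $\|H^{(\ell-1)}_{ii}\|_F$ is just $u$, not $2\varepsilon+u$, so the base case is simply the weakening $u\le 2\varepsilon+u$ together with the verification $c_1(n,\ell,u,\varepsilon)\le c_2(n,\ell-1,u,\varepsilon)$, exactly as the paper does.
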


\begin{proof}
The proof proceeds by induction on the backward error in the LU decomposition of diagonal blocks in different levels. 

For the base case of level $\ell'=\ell-1$, from
Lemma~\ref{thm:luhodlr-ell-1} the 
bound~\eqref{eq:lu_hodlr_gen_errbnd} holds since 
$c_1(n, \ell, u, \varepsilon)= 2\gamma_{n/2^{\ell}} +
 2\rho(n,\ell,u,\varepsilon) + u + h(n/2^{\ell}) u
\le \gamma_{n/2^{\ell-1}}+
2\rho(n,\ell-1,u,\varepsilon)+\varepsilon+ h(n/2^{\ell-1}) u =
c_2(n, \ell-1, u, \varepsilon)$.

For the inductive step, assume~\eqref{eq:lu_hodlr_gen_errbnd} is satisfied for HODLR blocks in level $\ell'=k+1$, $0\le k\le\ell-2$. 
A level-$k$ diagonal block of $\widehat{H}$, $0\le k\le\ell-2$,
is partitioned the same way as in~\eqref{eq:hodlr_block}, and its block LU factorization satisfies
\begin{equation}\label{eq:lu-levelk}
\widehat{H}^{(k)}_{ii} =
\widehat{L}^{(k)}_{ii} \widehat{U}^{(k)}_{ii} \equiv 
\begin{bmatrix}
    \widehat{L}^{(k+1)}_{2i-1,2i-1}  &  \\  \widehat{L}^{(k+1)}_{2i,2i-1}  & \widehat{L}^{(k+1)}_{2i,2i}
\end{bmatrix}
\begin{bmatrix}
    \widehat{U}^{(k+1)}_{2i-1,2i-1}  & \widehat{U}^{(k+1)}_{2i-1,2i} \\  
    & \widehat{U}^{(k+1)}_{2i,2i}
\end{bmatrix}.
\end{equation}
From our inductive hypothesis, the LU factorization of $\widehat{H}^{(k+1)}_{2i-1,2i-1}$ satisfies
\begin{equation}\label{eq:lua11p1}
\widehat{L}^{(k+1)}_{2i-1,2i-1} \widehat{U}^{(k+1)}_{2i-1,2i-1} =  H^{(k+1)}_{2i-1,2i-1} + \Delta H^{(k+1)}_{2i-1,2i-1},  
\end{equation}
where 
\begin{multline*}
\|\Delta H_{2i-1,2i-1}^{(k+1)}\|_F  \lesssim 
(2^{\ell - k - 1}-1)(2\varepsilon+u)\| H^{(k+1)}_{2i-1,2i-1}\|_F  \\ + c_2(n,k+1, u, \varepsilon) \|\widehat{L}^{(k+1)}_{2i-1,2i-1}\|_F
\|\widehat{U}^{(k+1)}_{2i-1,2i-1}\|_F.
\end{multline*}

Applying~\eqref{eq:LUassmp-trisolve_lowrank}--\eqref{eq:LUassmp-trisolve_lowrank2} to the triangular solves with
$\widehat{H}^{(k+1)}_{2i-1,2i}$ and $\widehat{H}^{(k+1)}_{2i,2i-1}$ as the right-hand sides, respectively, we get
\begin{equation}\label{eq:lu-offdiag-trianglesove-err1}
\widehat{L}^{(k+1)}_{2i-1,2i-1} \widehat{U}^{(k+1)}_{2i-1,2i} 
= H^{(k+1)}_{2i-i,2i} + \Delta H^{(k+1)}_{2i-1,2i}, 
\end{equation}
where $\|\Delta H^{(k+1)}_{2i-1,2i}\|_F \le \rho(n, k+1, u, \varepsilon) \|\widehat{L}_{2i-1,2i-1}^{(k+1)}\|_F\|\widehat{U}^{(k+1)}_{2i-1,2i}\|_F$, and
\begin{equation}\label{eq:lu-offdiag-trianglesove-err2}
\widehat{L}_{2i,2i-1}^{(k+1)} \widehat{U}^{(k+1)}_{2i-1,2i-1}
= H^{(k+1)}_{2i,2i-1} + \Delta H^{(k+1)}_{2i,2i-1},
\end{equation}
where $\|\Delta H^{(k+1)}_{2i,2i-1}\|_F \le \rho(n, k+1, u, \varepsilon) \|\widehat{L}_{2i,2i-1}^{(k+1)}\|_F\|\widehat{U}^{(k+1)}_{2i-1,2i-1}\|_F$.

For the Schur complement term $S^{(k+1)}_{2i} := \widehat{H}_{2i,2i}^{(k+1)} - \widehat{L}^{(k+1)}_{2i,2i-1} \widehat{U}^{(k+1)}_{2i-1,2i}=: \widehat{H}_{2i,2i}^{(k+1)} - Z^{(k+1)}_{2i}$, from Assumption~\ref{luassum:1} we have
\begin{equation}\label{eq:temp_zl-1}
    \widehat{Z}^{(k+1)}_{2i} =  \widehat{L}^{(k+1)}_{2i,2i-1}\widehat{U}^{(k+1)}_{2i-1,2i} - \Delta Z^{(k+1)}_{2i}, \  \|\Delta Z^{(k+1)}_{2i}\|_F \le h(n/2^{k+1})u\|\widehat{L}^{(k+1)}_{2i,2i-1}\|_F\|\widehat{U}^{(k+1)}_{2i-1,2i}\|_F,
\end{equation}
and the computed Schur complement $\widetilde{S}^{(k+1)}_{2i}$ satisfies
\begin{equation}\label{eq:schur_fp2}
    \widetilde{S}^{(k+1)}_{2i} = \widehat{H}_{2i,2i}^{(k+1)} - \widehat{Z}^{(k+1)}_{2i} + F^{(k+1)}_{2i}, \   \|F^{(k+1)}_{2i}\|_F \le u(\|\widehat{H}_{2i,2i}^{(k+1)}\|_F + \|\widehat{Z}^{(k+1)}_{2i}\|_F). 
\end{equation}
\new{From Assumption~\ref{luassum:truncate},} the truncation error $\Delta \tau^{(k+1)}_{2i}:= \widehat{S}^{(k+1)}_{2i} - \widetilde{S}^{(k+1)}_{2i}$ in this process satisfies 
$\|\Delta\tau^{(k+1)}_{2i}\|_F \le \varepsilon \|\widetilde{S}^{(k+1)}_{2i}\|_F
\approx \varepsilon \|\widehat{S}^{(k+1)}_{2i}\|_F$.
Combining this truncation error bound with~\eqref{eq:temp_zl-1}--\eqref{eq:schur_fp2}, we have, with higher order terms in
$u$ and $\varepsilon$ ignored,
\begin{equation}\label{eq:schur_fp3}
\begin{aligned}
    \widehat{S}^{(k+1)}_{2i} &=\widetilde{S}^{(k+1)}_{2i} + \Delta\tau^{(k+1)}_{2i}
    =: H_{2i,2i}^{(k+1)} - \widehat{L}^{(k+1)}_{2i,2i-1}
    \widehat{U}^{(k+1)}_{2i-1,2i} + \Delta S^{(k+1)}_{2i}, 
\end{aligned}
\end{equation}
where $\Delta S^{(k+1)}_{2i}=\Delta \widehat{H}_{2i,2i}^{(k+1)} + \Delta Z^{(k+1)}_{2i} + F^{(k+1)}_{2i}+ \Delta\tau^{(k+1)}_{2i}$ (recalling from Lemma~\ref{lemma:approx-diag} that $\Delta \widehat{H}_{2i,2i}^{(k+1)}=\widehat{H}_{2i,2i}^{(k+1)} - H_{2i,2i}^{(k+1)}$ denotes the representation error) and 
\begin{equation*}
\begin{aligned}
    \|\Delta S^{(k+1)}_{2i}\|_F 
    \lesssim& \varepsilon\|H_{2i,2i}^{(k+1)}\|_F + h(n/2^{k+1})u  \|\widehat{L}^{(k+1)}_{2i,2i-1}\|_F
    \|\widehat{U}^{(k+1)}_{2i-1,2i}\|_F \\
    & + u(\|H_{2i,2i}^{(k+1)}\|_F  + \|\widehat{Z}_{2i}^{(k+1)}\|_F)+ \varepsilon \|{H}_{2i,2i}^{(k+1)} - \widehat{L}^{(k+1)}_{2i,2i-1}\widehat{U}^{(k+1)}_{2i-1,2i} \|_F \\
    \lesssim& (2\varepsilon+u)\|H_{2i,2i}^{(k+1)}\|_F + \left(h(n/2^{k+1})u  + \varepsilon \right)\|\widehat{L}^{(k+1)}_{2i,2i-1}\|_F
    \|\widehat{U}^{(k+1)}_{2i-1,2i}\|_F.
\end{aligned}
\end{equation*}

Using the inductive hypothesis again, the computation of the LU factorization of the truncated Schur complement $\widehat{S}^{(k+1)}_{2i}$ satisfies
\begin{equation}\label{eq:luA22p}
    \widehat{L}^{(k+1)}_{2i,2i} \widehat{U}^{(k+1)}_{2i,2i} = \widehat{S}^{(k+1)}_{2i} + \Delta \widehat{S}^{(k+1)}_{2i},
\end{equation}
where 
\begin{equation*}
\|\Delta \widehat{S}^{(k+1)}_{2i}\|_F \lesssim 
(2^{\ell - k - 1}-1)(2\varepsilon+u)\| H^{(k+1)}_{2i,2i}\|_F  
+ c_2(n, k+1, u, \varepsilon) \|\widehat{L}^{(k+1)}_{2i,2i}\|_F
\|\widehat{U}^{(k+1)}_{2i,2i}\|_F.
\end{equation*}
Using \eqref{eq:schur_fp3} and \eqref{eq:luA22p}, we have
\begin{equation}\label{eq:luA22p2}
\begin{aligned}
\widehat{L}^{(k+1)}_{2i,2i} \widehat{U}^{(k+1)}_{2i,2i} + \widehat{L}^{(k+1)}_{2i,2i-1}\widehat{U}^{(k+1)}_{2i-1,2i} 
&= H_{2i,2i}^{(k+1)} + \Delta S^{(k+1)}_{2i} + \Delta \widehat{S}^{(k+1)}_{2i} \\
&=: H_{2i,2i}^{(k+1)} + \Delta H_{2i,2i}^{(k+1)},
\end{aligned}
\end{equation}
where 
\begin{equation*}
\begin{aligned}
\|\Delta H_{22}^{(k+1)}\|_F& \le \|\Delta S^{(k+1)}_{2i}\|_F +
\|\Delta \widehat{S}^{(k+1)}_{2i}\|_F \\  
\lesssim& 2^{\ell - k - 1}(2\varepsilon+u)\|H_{2i,2i}^{(k+1)}\|_F + \left(h(n/2^{k+1})u  + \varepsilon \right)\|\widehat{L}^{(k+1)}_{2i,2i-1}\|_F
    \|\widehat{U}^{(k+1)}_{2i-1,2i}\|_F\\
&+ c_2(n, k+1, u, \varepsilon) \|\widehat{L}^{(k+1)}_{2i,2i}\|_F
\|\widehat{U}^{(k+1)}_{2i,2i}\|_F.
\end{aligned}
\end{equation*}
Combining~\eqref{eq:lua11p1}--\eqref{eq:lu-offdiag-trianglesove-err2} and \eqref{eq:luA22p2} for the block LU factorization~\eqref{eq:lu-levelk} at level $k$, we arrive at
$\widehat{L}^{(k)}_{ii} \widehat{U}^{(k)}_{ii} = H^{(k)}_{ii} + \Delta H^{(k)}_{ii}$, 
where,
similar to the final steps of Lemma~\ref{thm:luhodlr-ell-1},
we have
\begin{equation*}
\begin{aligned}
\|\Delta H^{(k)}_{ii}\|_F  \le& (2^{\ell-k}-1)(2\varepsilon+u)\| H^{(k)}_{ii}\|_F  \\
+& \left(2c_2(n,k+1, u, \varepsilon) + 2\rho(n, k+1,u,\varepsilon) +
h(n/2^{k+1})u  + \varepsilon \right)
\|\widehat{L}^{(k)}_{ii}\|_F
\|\widehat{U}^{(k)}_{ii}\|_F,
\end{aligned}
\end{equation*}
where $2c_2(n,k+1, u, \varepsilon) =(2^{\ell - k}-2)(\gamma_{n/2^{k+1}}+
2\rho(n,k+1,u,\varepsilon)+\varepsilon+ h(n/2^{k+1}) u)$, and so 
the factor of $\|\widehat{L}^{(k)}_{ii}\|_F
\|\widehat{U}^{(k)}_{ii}\|_F$ in this bound is bounded above by
$(2^{\ell - k}-1)(\gamma_{n/2^{k+1}}+
2\rho(n,k+1,u,\varepsilon)+\varepsilon+ h(n/2^{k+1}) u) \le c_2(n,k, u, \varepsilon)$.
It follows that
\begin{equation*}
  \|\Delta H^{(k)}_{ii}\|_F  \le (2^{\ell-k}-1)(2\varepsilon+u)\| H^{(k)}_{ii}\|_F  + c_2(n,k, u, \varepsilon)\|\widehat{L}^{(k)}_{ii}\|_F
\|\widehat{U}^{(k)}_{ii}\|_F.
\end{equation*}
This proves the bound~\eqref{eq:lu_hodlr_gen_errbnd} for HODLR blocks in level $k$ and so the proof is completed by induction.
\end{proof}

\begin{corollary}\label{thm:luhodlr-epsilon}
Let $\widehat{H}$ be the mixed-precision $\ell$-level HODLR representation computed via Algorithm~\ref{alg:mp-hodlr-adap}. 
If the LU decomposition of $\widehat{H}$ is computed in a working precision $u \lesssim \varepsilon/n$, then
the LU factorization of the HODLR matrix $\widehat{H}$ satisfies \begin{equation}\label{eq:lu_col}
\widehat{L} \widehat{U} =  H + \Delta H, \quad
\|\Delta H \|_F  \lesssim 2(2^{\ell}-1)\varepsilon \| H \|_F  + 
11(2^{\ell}-1)\varepsilon \|\widehat{L}\|_F\|\widehat{U}\|_F.
\end{equation}
\end{corollary}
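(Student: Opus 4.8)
The plan is to specialize the general estimate of Theorem~\ref{thm:luhodlr} at level $\ell'=0$ to the regime $u \lesssim \varepsilon/n$ and simply absorb all roundoff-dependent quantities into $O(\varepsilon)$. Recall from that theorem that $\widehat{L}\widehat{U} = H + \Delta H$ with
\[
\|\Delta H\|_F \le (2^{\ell}-1)(2\varepsilon+u)\|H\|_F + (2^{\ell}-1)\bigl(7\varepsilon + \gamma_n + 2f(n)u + h(n)u\bigr)\|\widehat{L}\|_F\|\widehat{U}\|_F + O(u^2).
\]

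First I would treat the coefficient of $\|H\|_F$: since $u \lesssim \varepsilon/n \le \varepsilon$, we have $2\varepsilon + u = 2\varepsilon\bigl(1+O(1/n)\bigr)$, so $(2^{\ell}-1)(2\varepsilon+u)\|H\|_F \lesssim 2(2^{\ell}-1)\varepsilon\|H\|_F$. Next, for the coefficient of $\|\widehat{L}\|_F\|\widehat{U}\|_F$, I would bound each roundoff term using $nu \lesssim \varepsilon$: this gives $\gamma_n = nu/(1-nu)\approx nu \lesssim \varepsilon$, and since the hierarchical-arithmetic constants satisfy $h(n)u \le \gamma_n$ and $f(n)u \le \gamma_n$ (with equality in the fully dense case, as noted after Assumptions~\ref{luassum:1}--\ref{luassum:2}), we also get $h(n)u \lesssim \varepsilon$ and $f(n)u \lesssim \varepsilon$. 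Summing, $7\varepsilon + \gamma_n + 2f(n)u + h(n)u \lesssim 7\varepsilon + \varepsilon + 2\varepsilon + \varepsilon = 11\varepsilon$, yielding the coefficient $11(2^{\ell}-1)\varepsilon$. The $O(u^2)$ remainder is dropped under the $\lesssim$ convention since $u^2 \le \varepsilon u$ is second order. Putting the two pieces together gives exactly~\eqref{eq:lu_col}.

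The only real obstacle is bookkeeping rather than any new idea: one must check that the hypothesis $u \lesssim \varepsilon/n$ is precisely what is needed to dominate every occurrence of $\gamma_n$, $h(n)u$, and $f(n)u$ by a constant multiple of $\varepsilon$, which in turn relies on the earlier observation that the blocked-arithmetic constants $h(n)$ and $f(n)$ do not exceed $\gamma_n/u \approx n$. Granting that, the corollary is essentially a one-line substitution into Theorem~\ref{thm:luhodlr}.
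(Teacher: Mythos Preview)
Your argument is correct and is exactly the derivation the paper leaves implicit: the corollary is stated without proof, and the intended justification is precisely the substitution of $u\lesssim \varepsilon/n$ into the level-$0$ bound of Theorem~\ref{thm:luhodlr}, using $\gamma_n\approx nu\lesssim\varepsilon$ together with the remark that $h(n)u,f(n)u\le \gamma_n$ in the dense case (and are expected to be no larger in the hierarchical case).
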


Corollary~\ref{thm:luhodlr-epsilon} basically says that the precision in which we compute the LU factorization should be based on the parameter $\varepsilon$; if we select a relatively large $\varepsilon$, and thus have a high level of approximation, then we can compute the LU factorization in a low precision without affecting the backward error.
Also, we note that the results remain applicable when the mixed-precision storage of HODLR matrices are not used, in which case 
the results degenerate to the LU factorization of HODLR matrices 
stored in one precision and the bound remains in the same form, albeit with smaller constant factors.

\section{Numerical experiments}\label{sec:nexp}
Our adaptive-precision HODLR matrix construction scheme described in Algorithm~\ref{alg:mp-hodlr-adap} is implemented in MATLAB. The implementation allows any depth and an arbitrary set of precisions to be prescribed. 
All off-diagonal blocks in the HODLR matrix are truncated using the SVD with user-specified parameter $0<\varepsilon<1$. All experiments are run on a 2.30 GHz Intel Core i7-12700H CPU with 32GB RAM, running in a single thread.
All precisions used for computing the adaptive-precision HODLR matrix described in Algorithm~\ref{alg:mp-hodlr-adap} are taken from the set $\{\text{q52, bf16, fp16, fp32, fp64}\}$; see  \tablename~\ref{table:unitroundoff} for details on these precision formats. 
The precisions are simulated by the chop function of \cite{doi:10.1137/19M1251308}.

In the following experiments, we evaluate the reconstruction error, the backward errors of the matrix--vector product and LU factorization, and the storage cost for the HODLR matrix. \new{The assumption of a perfectly balanced tree structure in the HODLR format barely holds in practice. Thus, we remove this constraint in our splitting scheme by setting the size of the left child node as the ceiling of the halved size of the parent node.
Through this practical partitioning scheme, the size of the blocks in the finest level is approximately $\lfloor n/2^{\ell} \rfloor$ or $\lceil n/2^{\ell} \rceil$.}

\new{Our test matrices consist of kernel matrices generated by~\eqref{eq:test-kernals} and the Schur complements (with respect to the $(1,1)$ block, of dimension~$n-\lceil n/2\rceil$, where $n$ is the size of the original matrix) of the the matrices listed in~Table~\ref{tab:testmat}, 
including the matrix P64 from~\cite{abbg22} and a selection from the SuiteSparse collection~\cite{10.1145/2049662.2049663}.
Numerous scientific domains, e.g., integral equations and machine learning, often give rise to such matrices \cite{doi:10.1137/22M1491253, 8425507}. 
HODLR matrices play a crucial role in kernel matrix approximation as well as its applications, which involve, e.g., matrix--vector products (Algorithm~\ref{alg:matvecprod}).
The Schur complement corresponds to the root separator of hierarchical partitionings and its (structured) approximation is crucial in sparse linear solvers~\cite{cdgs10, xcgl10}.}

\begin{table}
\caption{Summary of test matrices from~\cite{abbg22} and the SuiteSparse collection~\cite{10.1145/2049662.2049663}.}\label{tab:testmat}
\centering\footnotesize
\setlength\tabcolsep{5.0pt}
\begin{tabular}{l r r l}
\toprule
Dataset  & Size & Nonzeros & Description  \\ [0.5ex] 
\midrule
1138\_bus& 1,138	&4,054 &Power network problem \\
bcsstk08 & 1,074 &	12,960 & Structural problem\\
cavity18 &	4,562 & 138,040	 & Subsequent computational fluid dynamics problem\\
ex37 & 3,565 & 67,591 & 	Computational fluid dynamics problem\\
LeGresley\_2508 &2,508& 16,727 &	Power network problem \\
P64& 4,096 &  16,777,216 & The discretization of a Poisson equation   \\
psmigr\_1& 3,140 &	543,160 & Economic problem \\
saylr3& 1,000	 & 3,750& 	Computational fluid dynamics problem \\

\bottomrule
\end{tabular}
\label{realdatainfo}
\end{table}

\subsection{Global construction error}

\begin{figure}
\centering
\subfigure[ex37]{
\includegraphics[width=0.4\textwidth]{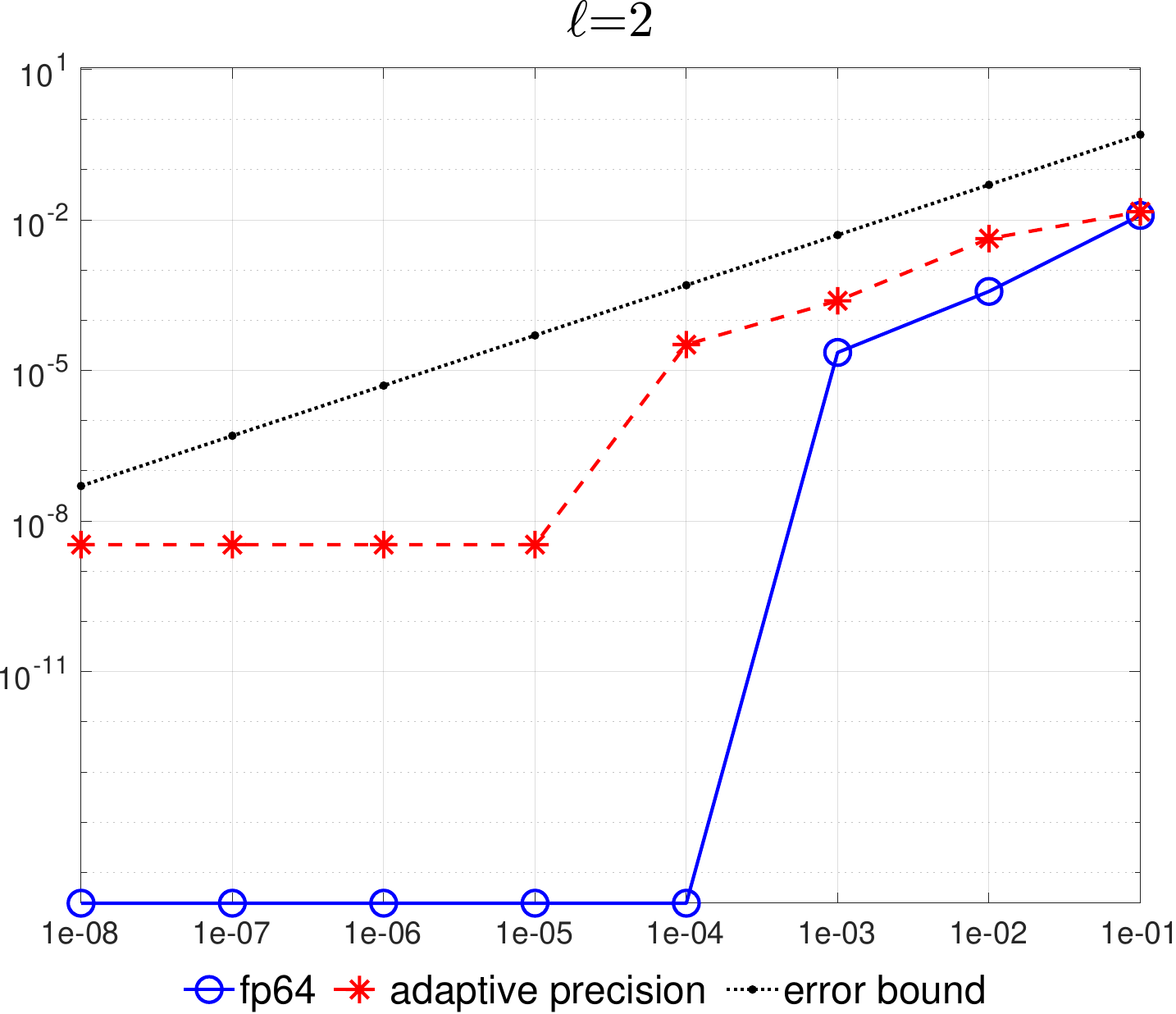}
\includegraphics[width=0.4\textwidth]{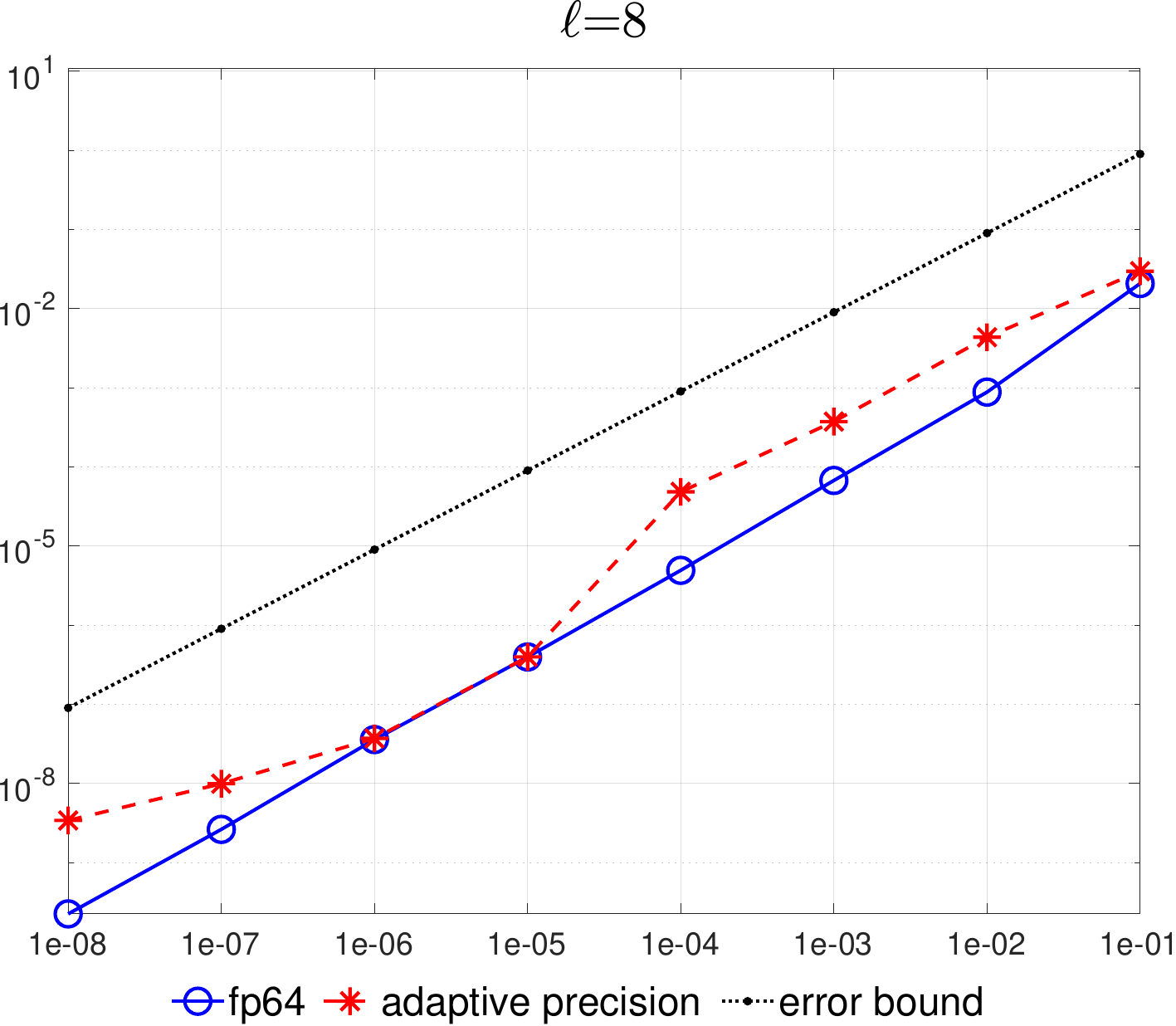}
}\\
\vspace{-10pt}
\subfigure[P64]{
\includegraphics[width=0.4\textwidth]{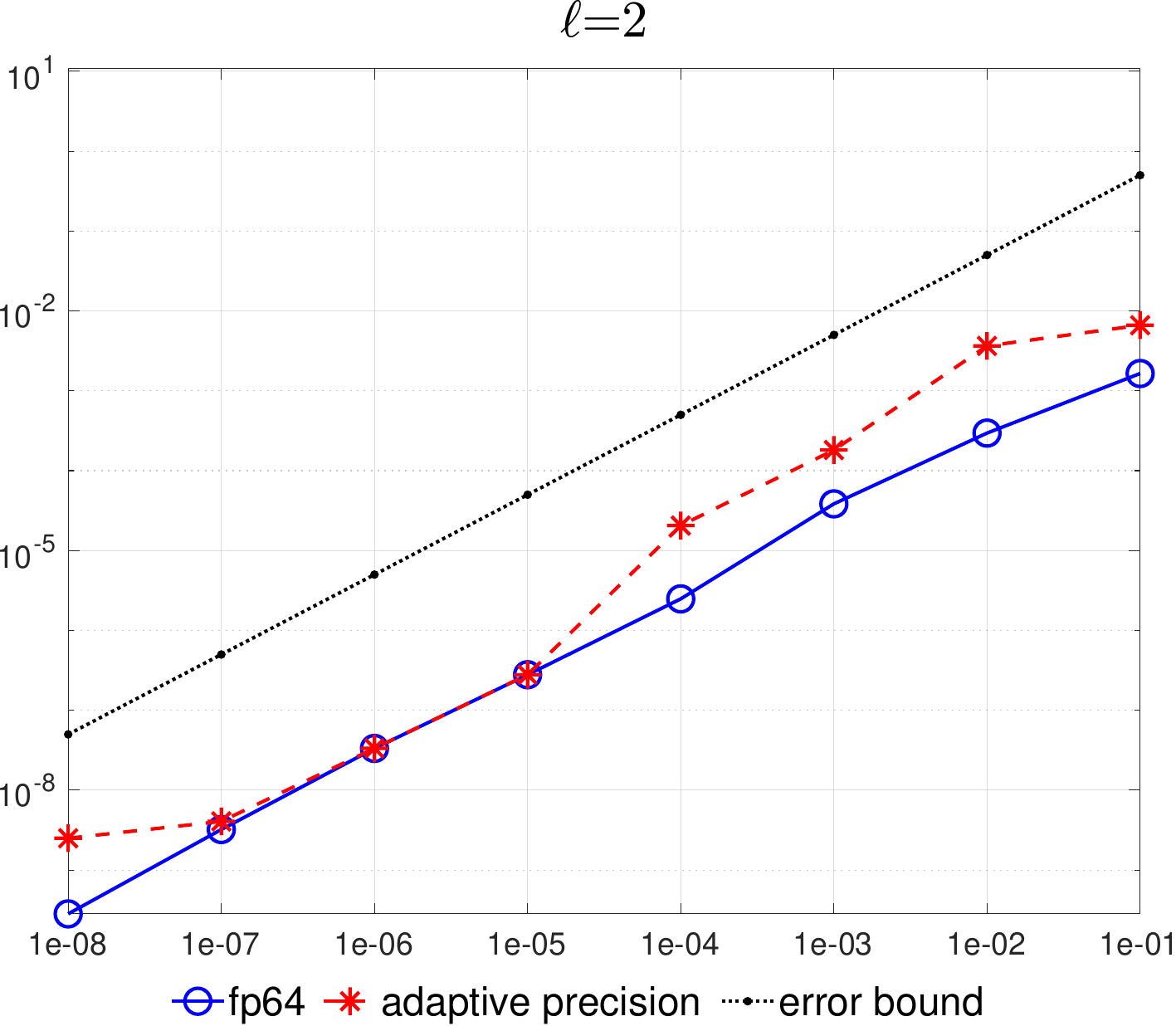}
\includegraphics[width=0.4\textwidth]{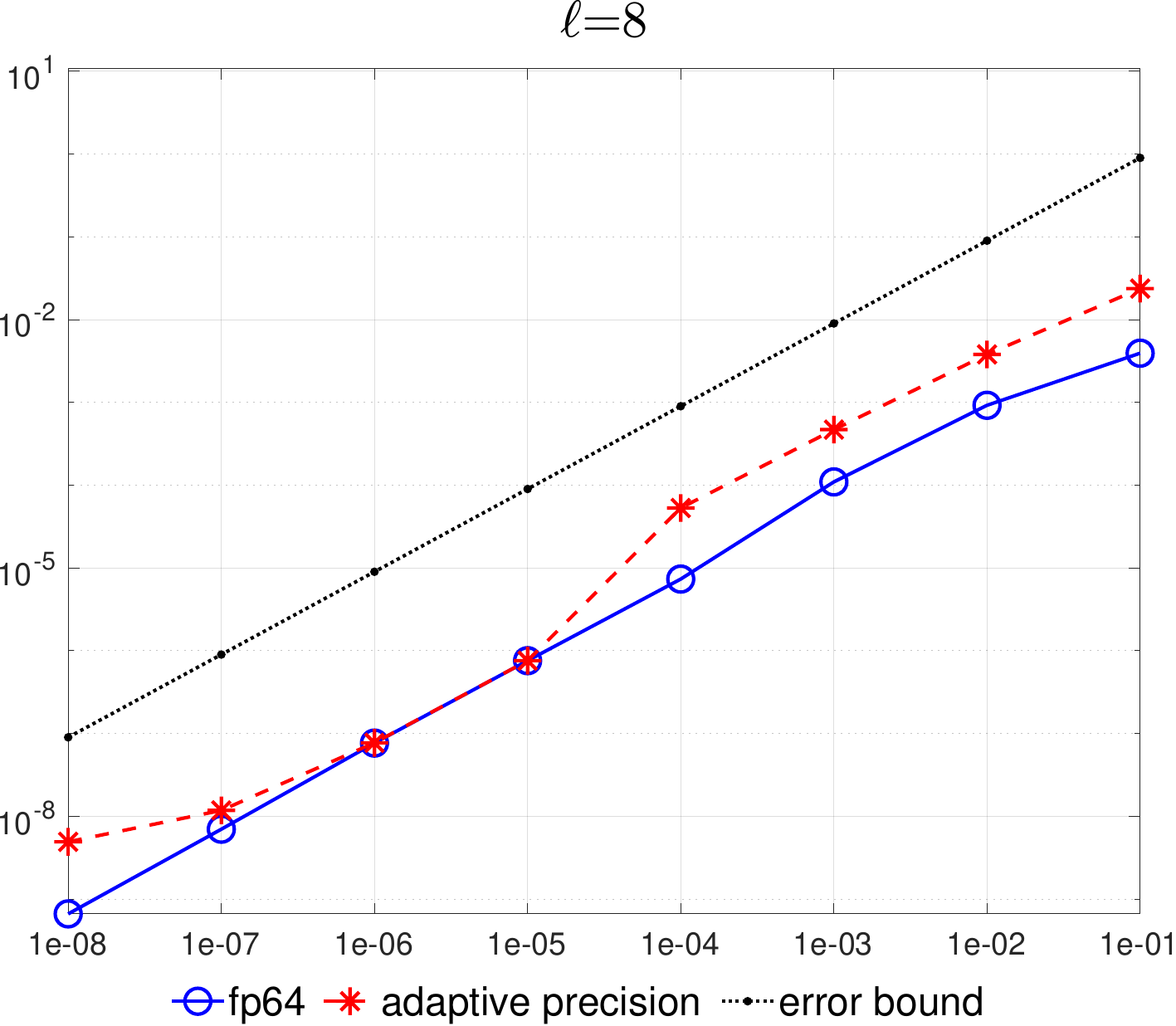}
}\\
\vspace{-10pt}
\subfigure[saylr3]{
\includegraphics[width=0.4\textwidth]{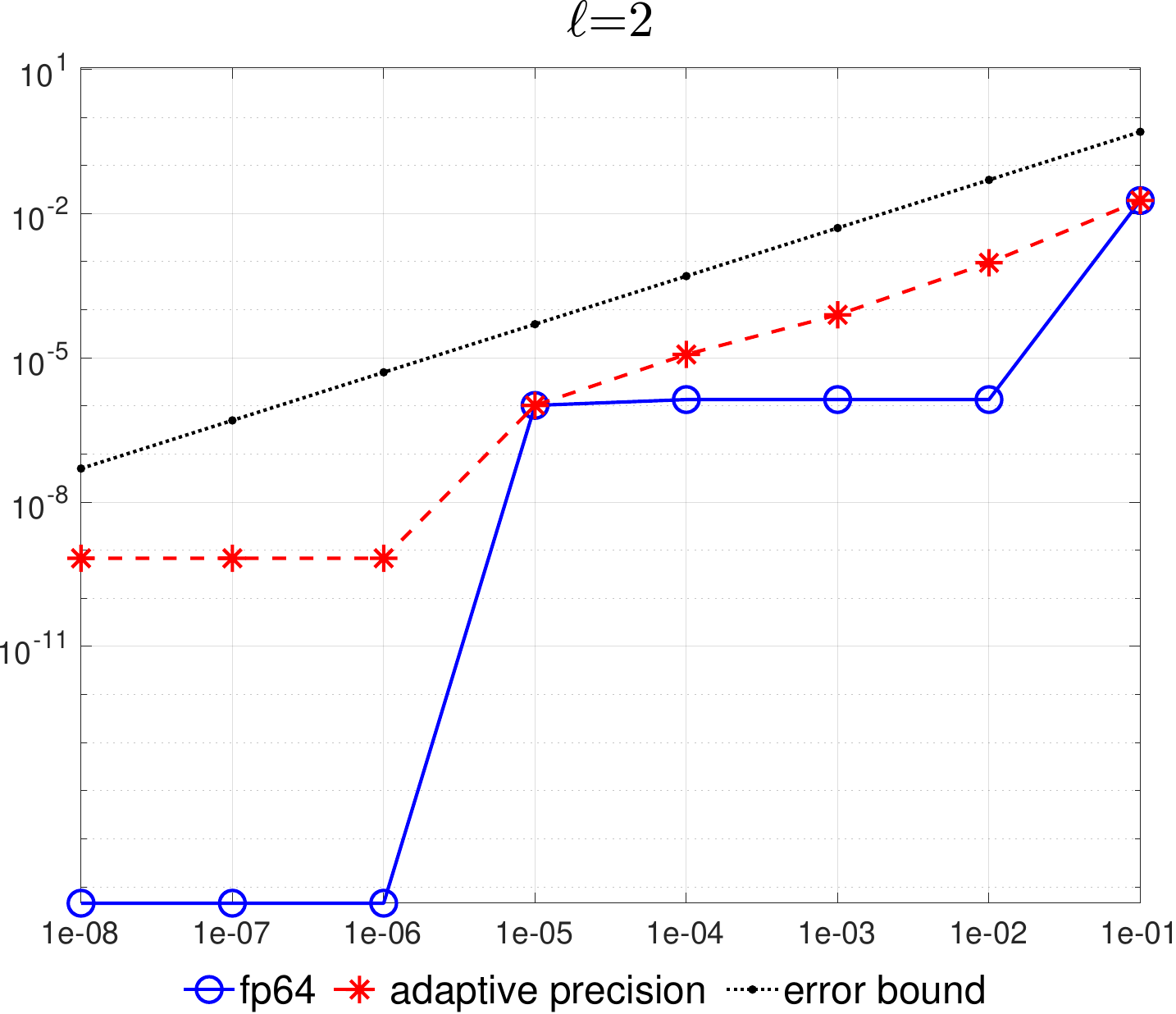}
\includegraphics[width=0.4\textwidth]{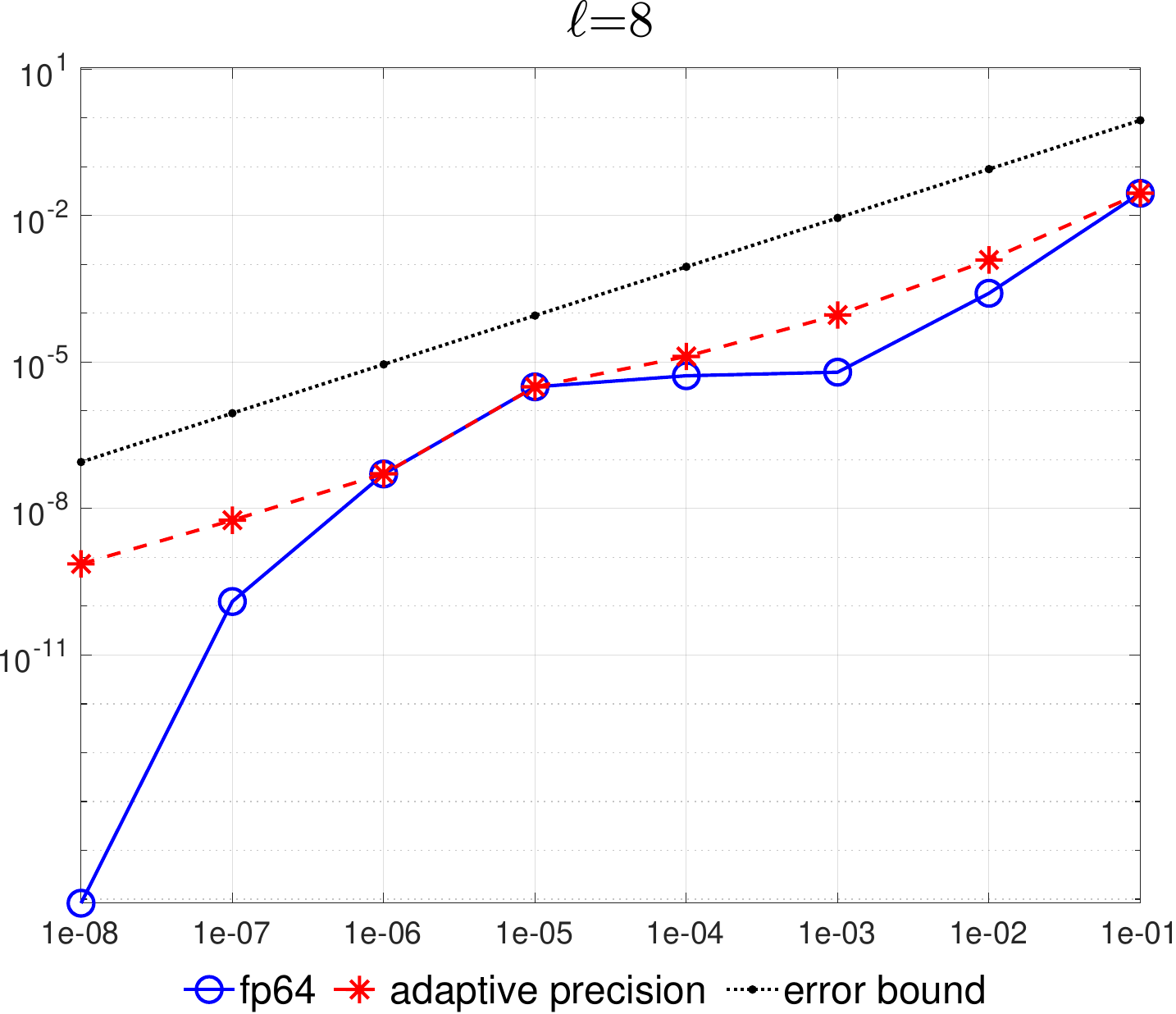}}
\caption{Reconstruction error for the adaptive-precision HODLR matrix. The $x$-axis indicates the value of $\varepsilon$ and the $y$-axis indicates relative global construction error.
}
\label{fig:gerr-lshp}
\end{figure}

We first examine the relative global construction error 
$\norm{H-\widehat{H}}_F/\norm{H}_F$
of $\widehat{H}$ produced by Algorithm~\ref{alg:mp-hodlr-adap} and the error bound~\eqref{eq:error-adap-prec-represent}; the global construction error for a uniform double precision scheme is also measured for comparison.
Fig.~\ref{fig:gerr-lshp} presents the results for HODLR matrices using two depths, $\ell=2$ and $\ell=8$. We note that we also ran the tests for other depth values and similar results were obtained.

It is interesting to note in~Fig.~\ref{fig:gerr-lshp} (a) and (c)
that for small $\varepsilon$, it is clear that the finite precision error dominates and there can be a huge difference between the mixed precision and the uniform double precision schemes. 
But as $\varepsilon$ grows larger, the low-rank approximation error starts to dominate in the bound, and the difference between the two schemes become marginal or unnoticeable. It is clear from the figure that for all values of $\varepsilon$, using double precision for storing the generators $U$ and $V$ for each layer takes more precision than necessary for satisfying the bound.

\subsection{Matrix--vector products}

\begin{figure}[t]
\centering
\subfigure[mat-1]{\includegraphics[width=0.38\textwidth]{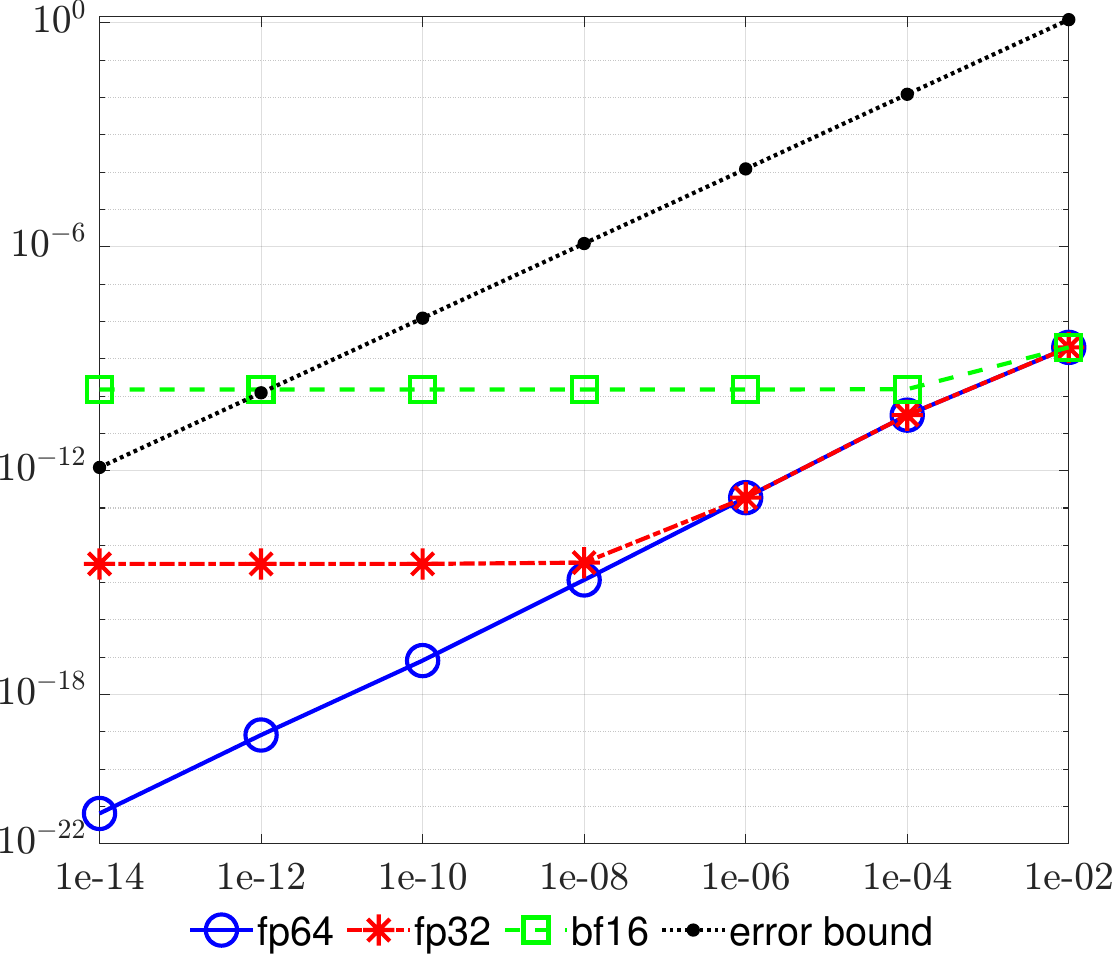}}
\subfigure[mat-2]{\includegraphics[width=0.38\textwidth]{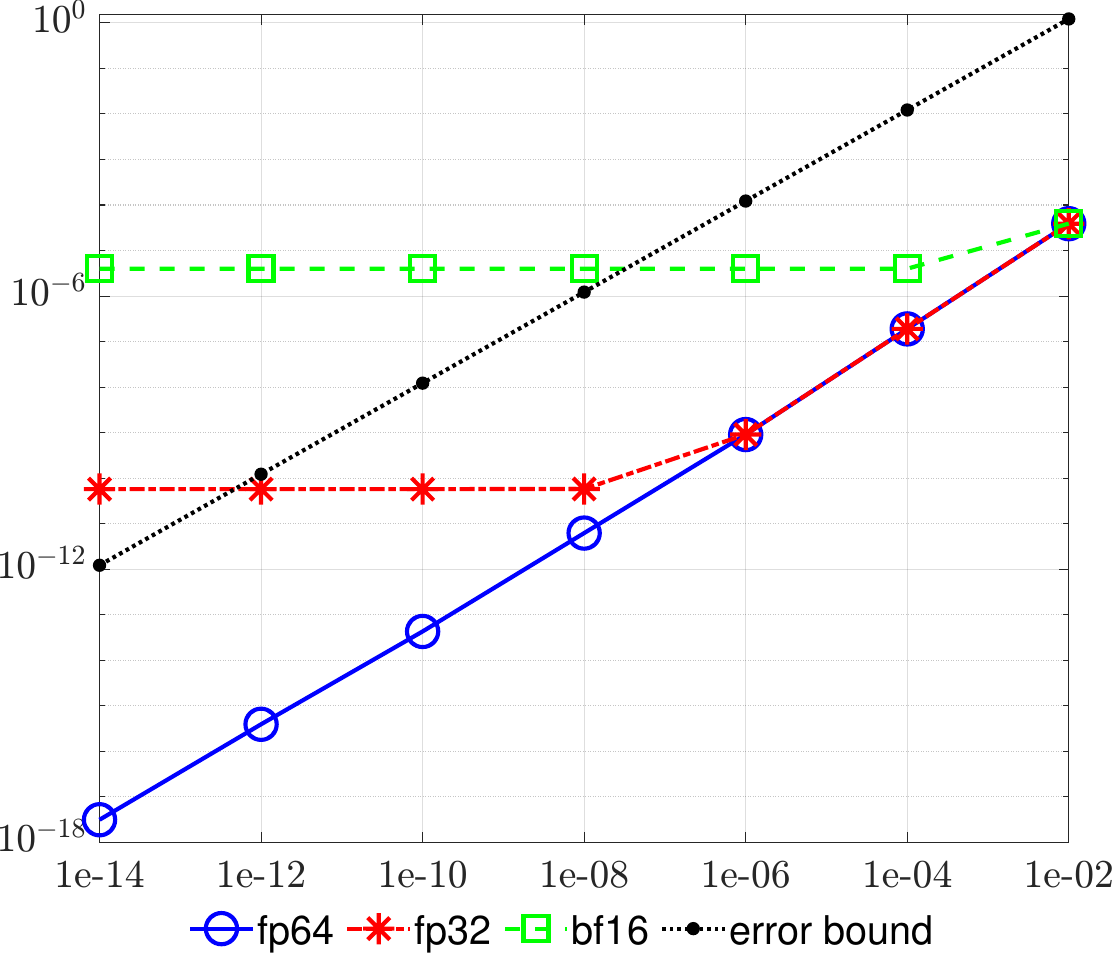}} \\
\vspace{-10pt}
\subfigure[mat-3]{\includegraphics[width=0.38\textwidth]{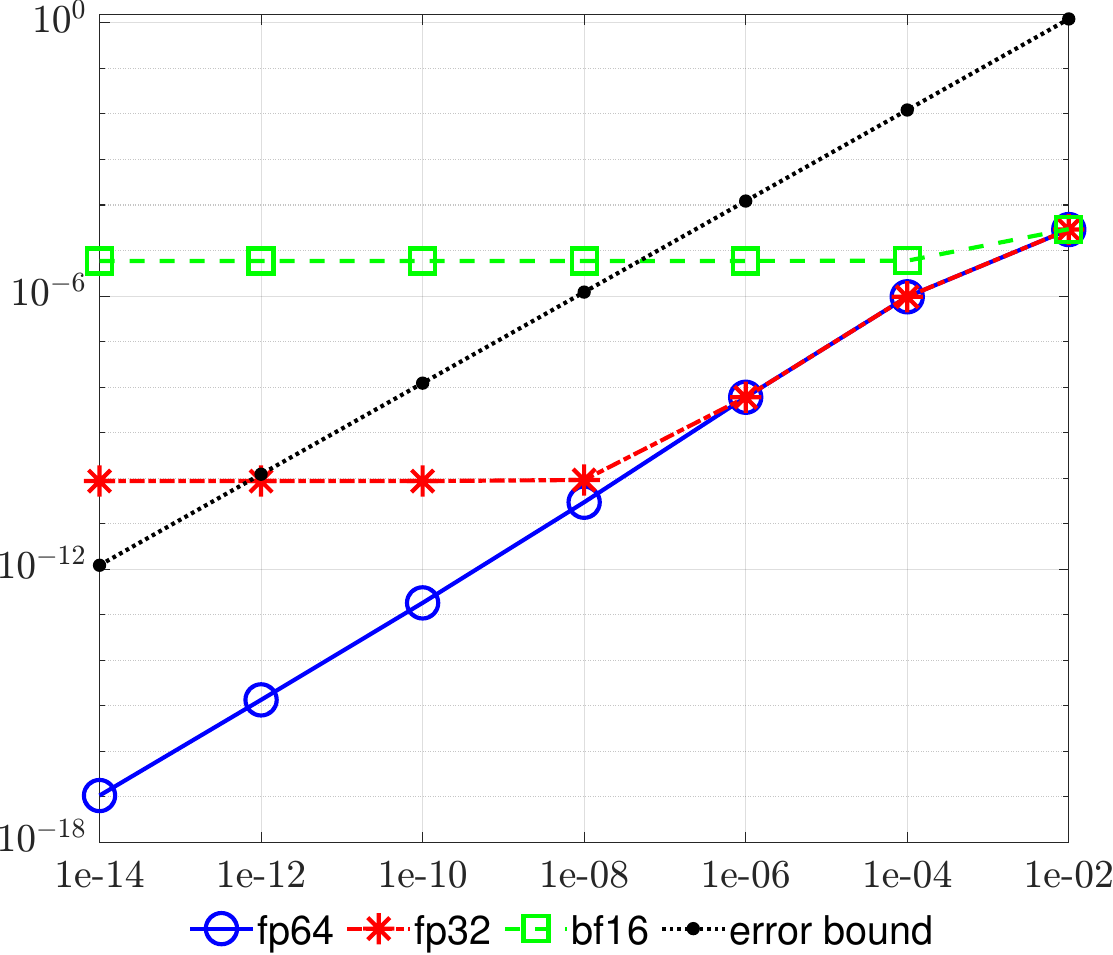}}
\subfigure[mat-4]{\includegraphics[width=0.38\textwidth]{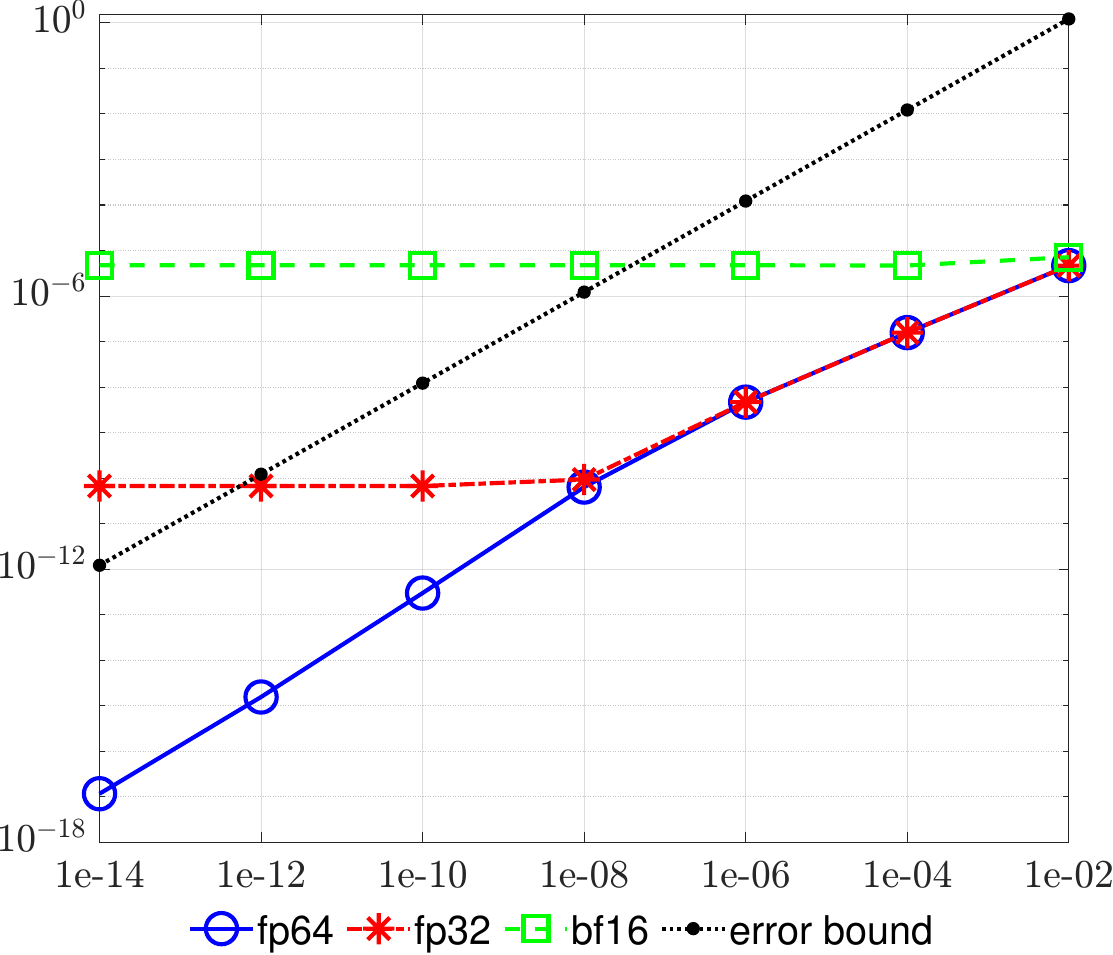}}
\caption{Backward error of matrix--vector products for mixed precision HODLR matrix with $\ell=8$; The $x$-axis indicates the value of $\varepsilon$ and the $y$-axis indicates the relative backward error.}
\label{fig:matvecerr}

\end{figure}

The pairwise interaction between objects such as $n$-body problems can be described by a kernel function. Associated with such a kernel function, a dense kernel matrix with dimension $n$ characterizes the interactions from a system of $n$ objects. We use the following kernels to demonstrate the results of Theorem~\ref{thm:mat-vec-bwerr}. 
\begin{equation}\label{eq:test-kernals}
\begin{aligned}
\text{(i)}\quad K_{ij} &= \begin{cases} 
      \frac{1}{x - y}, & \text{if} \quad x\neq y; \\
      1, & \text{otherwise}.
   \end{cases}
\qquad\text{(ii)}\quad K_{ij} = \begin{cases} 
      \log \|x_i - x_j\|_2, & \text{if} \quad x\neq y; \\
      0, & \text{otherwise}.
   \end{cases}\\
\text{(iii)}\quad K_{ij} &= \exp\Big( -\frac{\|x_i - x_j\|_2^2}{2h^2}  \Big).
\end{aligned}
\end{equation}

Our kernels are evaluated at 1D and 2D point sets: 
\begin{itemize}
    \item Set $\mathbf{s_1}$: A set of uniform grid points in $[0, 1]$.
    \item Set $\mathbf{s_2}$: A set of uniform grid points in $[-1, 1] \times [-1, 1]$.
\end{itemize}

We simulate tests on four kernel matrices of size $n=2000$, indicated by mat-1, mat-2, mat-3, and mat-4; mat-1 is generated by kernel (i) evaluated at point set $\mathbf{s}_1$; mat-2 is generated by kernel (ii) evaluated at point set $\mathbf{s}_2$; mat-3 and mat-4 are generated by kernel (iii) (defined by the Gaussian radial basis function) choosing $h=1$ and $h=20$, respectively, evaluated at point set $\mathbf{s}_2$.

We first form the matrix $\widehat{H}$ using the adaptive-precision approach in Algorithm \ref{alg:mp-hodlr-adap}. 
We then compute the matrix--vector product $\widehat{H}x=b$ where $\widehat{H}$ is
the computed approximation to the kernel matrix $K$, $x$ is the vector generated from the continuous uniform distribution in $(-1, 1)$, and $b$ is the result. We aim to demonstrate the balance between approximation error and finite precision error exhibited in Theorem \ref{thm:mat-vec-bwerr}, that is, we should choose the working precision such that $u\leq \varepsilon/n$ in order to not see the effects of finite precision.  We test a range of different values of $\varepsilon$ and three different working precisions $u$: fp64, fp32, and bf16. The relative backward error is evaluated as 
\begin{equation*}
    \frac{\|b - \widehat{H} x\|_F}{\|K\|_F \|x\|_F}.
\end{equation*}
All vectors $x\in \mathbb{R}^{n}$ are generated 10 times for each test and the backward errors are averaged. The results are shown in \figurename~\ref{fig:matvecerr}, where we only present the results for $\ell=8$; we also tested other
depth settings, e.g., $\ell=2$ and $\ell=5$, for the HODLR matrix construction and obtained similar results.

We can see from the plots that in order for the approximation error stemming from the choice of $\varepsilon$ to dominate, we do in fact need to have $u \lesssim \varepsilon$. For example, looking at the plot for mat-3, when we use single precision as the working precision, the error from finite precision dominates until around $\varepsilon = 10^{-8}$, corresponding to the unit roundoff for single precision. For working precision set to bf16, the finite precision rounding error dominates until $\varepsilon$ reaches the unit roundoff of bf16; see Table \ref{table:unitroundoff}, For double precision, the unit roundoff is small enough that the approximation error dominates the roundoff error for all tested values of $\varepsilon$. 

We also plot the bound from Theorem~\ref{thm:mat-vec-bwerr}; indeed, as given by the theoretical analysis, the bound holds as long as $u \leq\varepsilon/n$. In fact, due to the analysis, the bound is pessimistic and holds for $\varepsilon$ even smaller than $un$.
In general, it is clear that the smaller the value of  $\varepsilon$ for off-diagonal block truncation, the higher the precision we must use if we wish to not see the effects of finite precision error.  

\new{We have also evaluated the three kernel functions from 
(ii) and (iii) on the 3D point set $[-1, 1] \times [-1, 1] \times [-1, 1]$ and found similar behavior of the bound and the errors as in Fig.~\ref{fig:matvecerr}.}

\subsection{LU factorization}

\begin{figure}[t]
\centering
\subfigure[ex37]{
\includegraphics[width=0.4\textwidth]{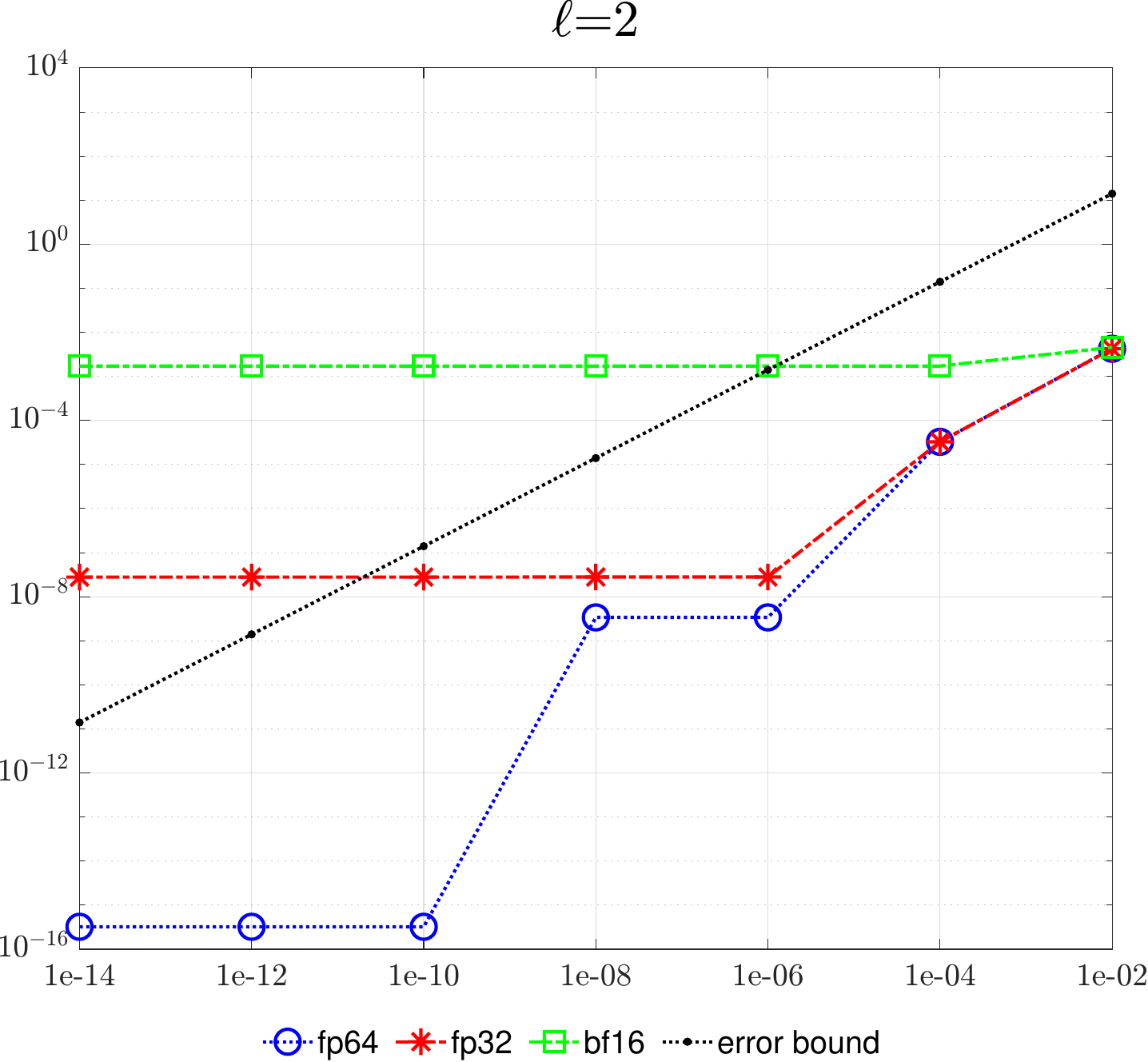}
\includegraphics[width=0.4\textwidth]{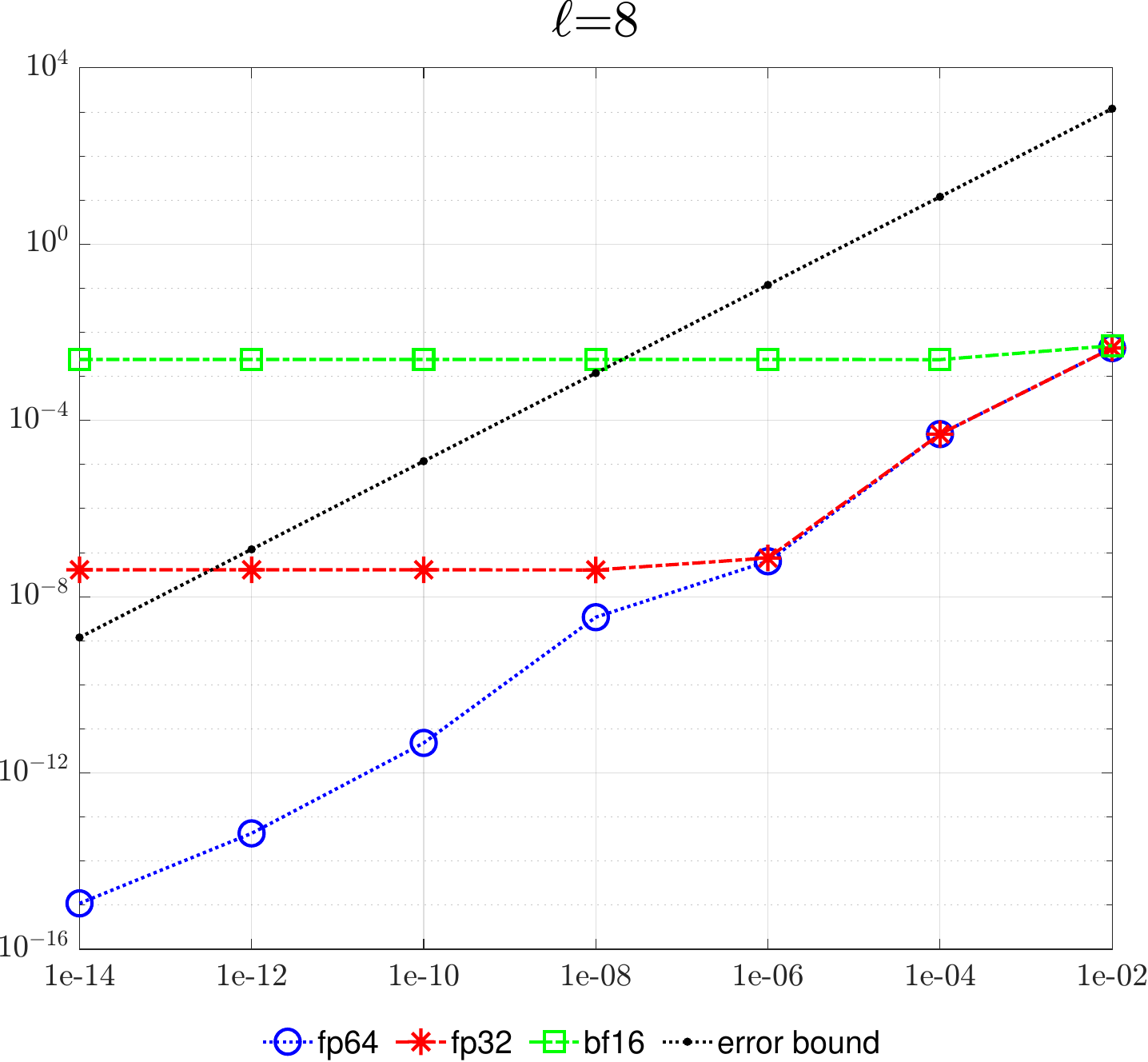}}
\\
\vspace{-10pt}
\subfigure[P64]{
\includegraphics[width=0.4\textwidth]{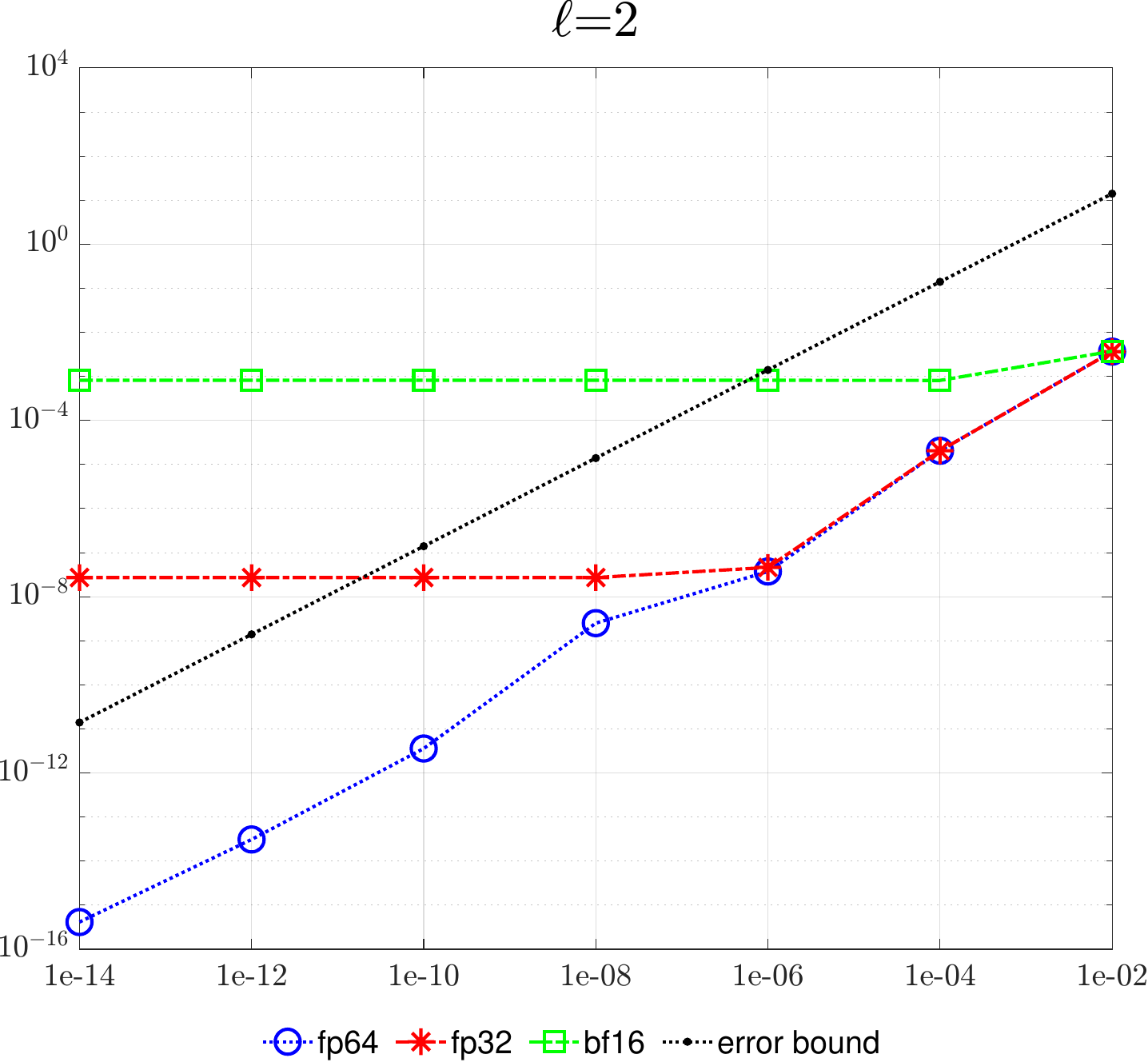}
\includegraphics[width=0.4\textwidth]{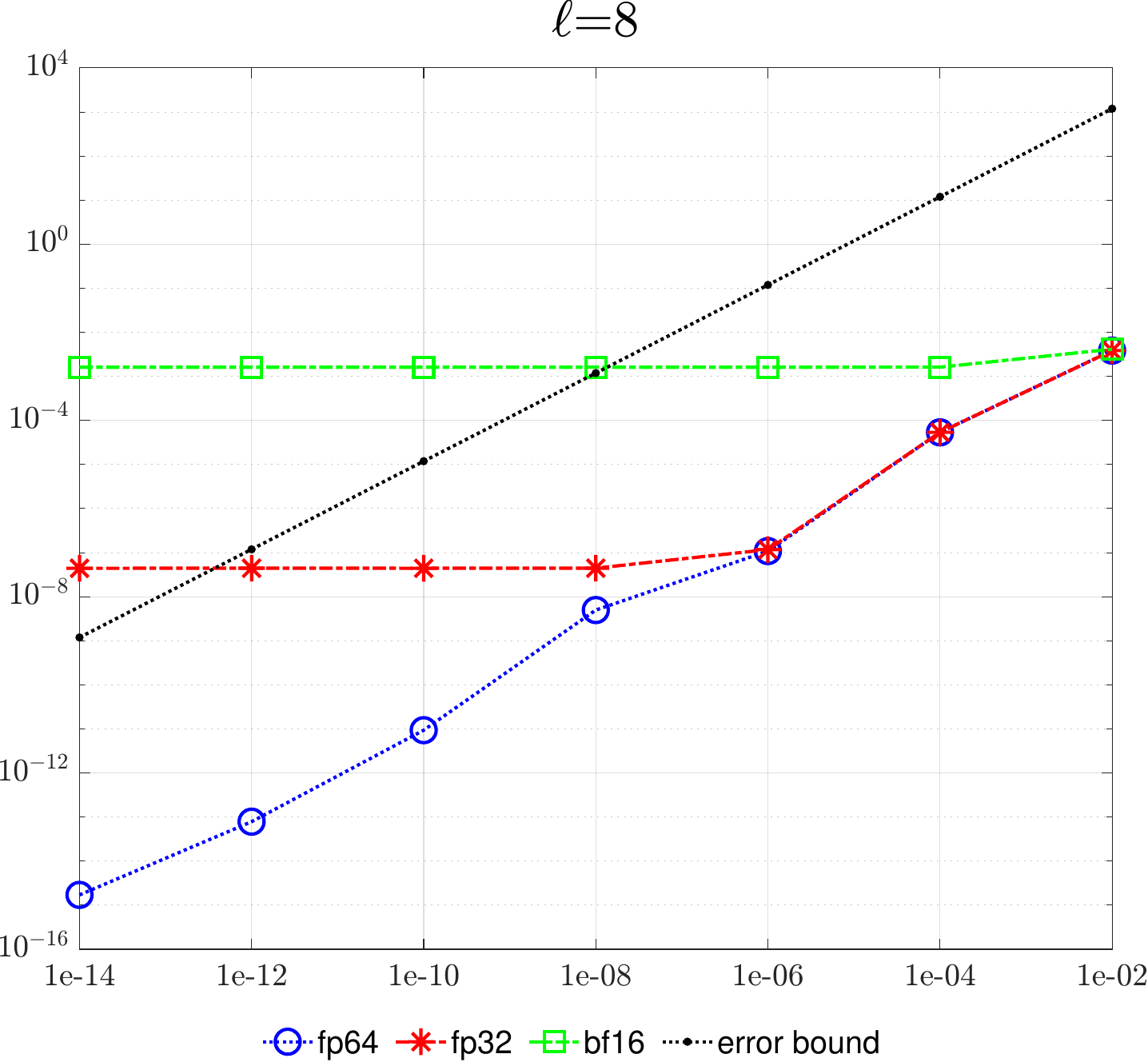}}
\\
\vspace{-10pt}
\subfigure[psmigr\_1]{
\includegraphics[width=0.4\textwidth]{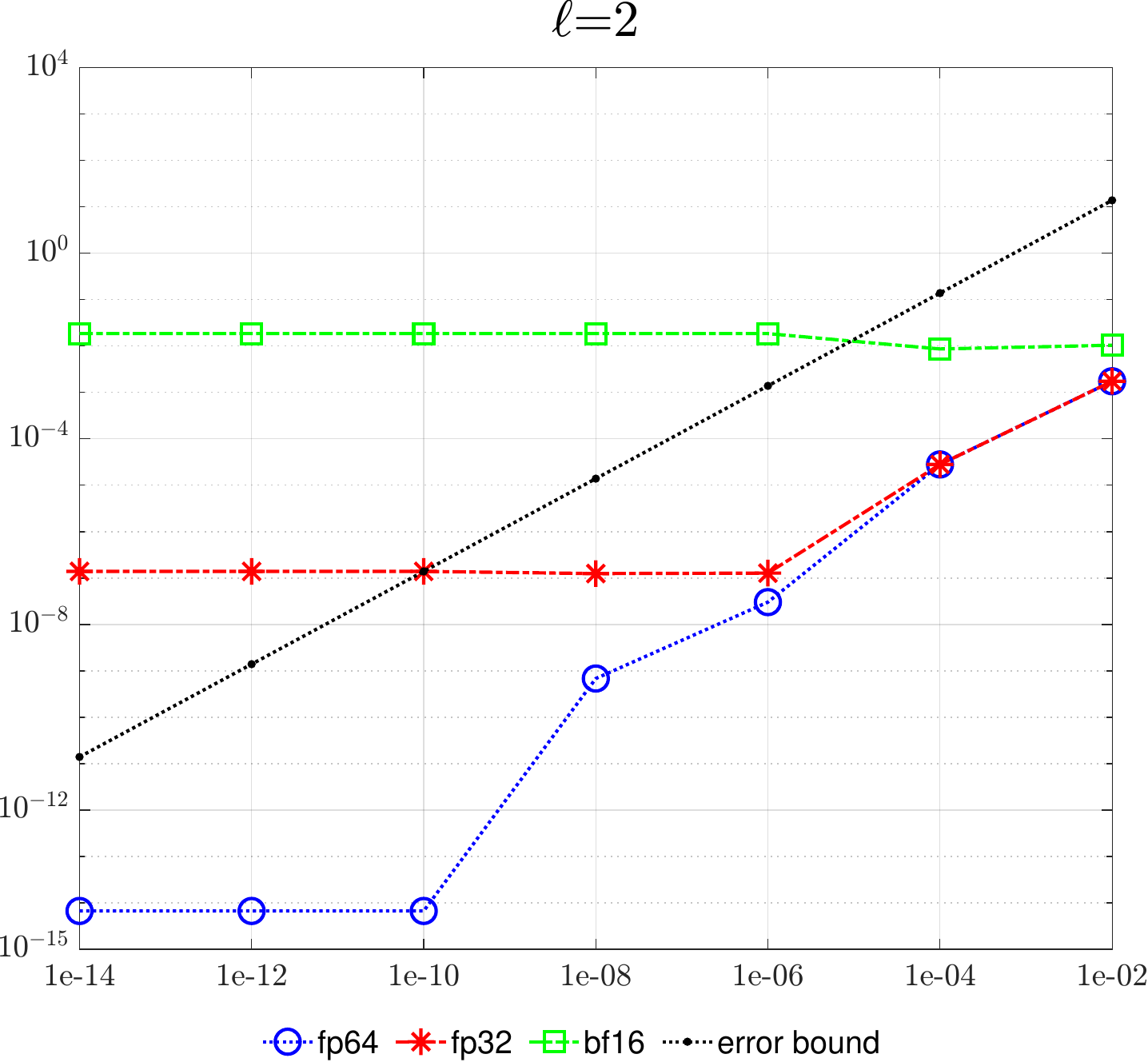}
\includegraphics[width=0.4\textwidth]{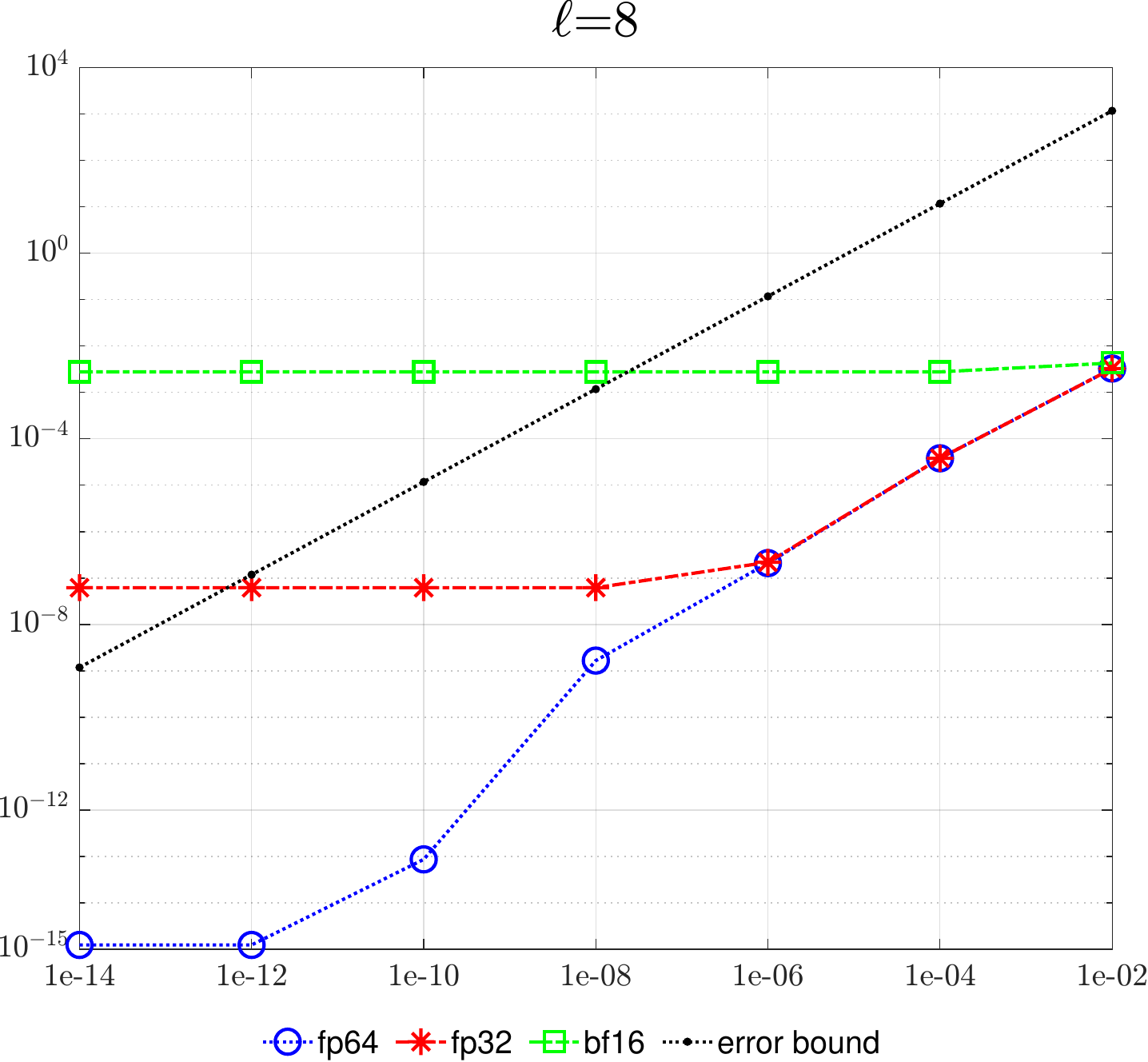}}
\caption{Backward error of LU factorization for mixed precision HODLR matrix. The $x$-axis indicates the value of $\varepsilon$ and the $y$-axis indicates relative backward error.
}
\label{fig:lu}
\end{figure}

We now evaluate the backward error of LU factorization, described in Theorem \ref{thm:luhodlr} for mixed-precision HODLR matrices, measured as
    $\|\widehat{L} \widehat{U} - A\|_F/\|A\|_F$, 
where $\widehat{L}$ and $\widehat{U}$ are the computed factors (corresponding to lower triangular and upper triangular parts) of hierarchical LU factorization of the computed HODLR matrix $\widehat{H}$ which approximates $A$.

As above, the mixed precision HODLR matrix is constructed using our adaptive-precision scheme (Algorithm~\ref{alg:mp-hodlr}), and the LU factorizations are separately evaluated using three different working precisions, double (fp64), single (fp32), and bfloat16 (bf16). We show results for two depths, 2 and 8.
The results are shown in \figurename~\ref{fig:lu}, where we plot the measured backward error ($y$-axis) for various values of $\varepsilon$ ($x$-axis). We also plot the line for \eqref{eq:lu_col} as error bound\footnote{Note that we only plot the line for the error bound of the LU factorization working in double precision since the two other lines are almost identical.} for $u \lesssim \varepsilon/n$.
We observe similar behavior as for the matrix--vector products. That is, the bounds are valid although pessimistic, and it is clear that the finite precision error dominates until around the level $u\approx \varepsilon$. Again, it is clear that a smaller value of $\varepsilon$ for off-diagonal block truncation requires a higher precision for computing LU factorization to ensure that the effects of finite precision do not dominate the backward error.

\subsection{Theoretical storage}

\begin{figure}[t]
\centering
\subfigure[bcsstk08]{\includegraphics[width=0.2\textwidth]{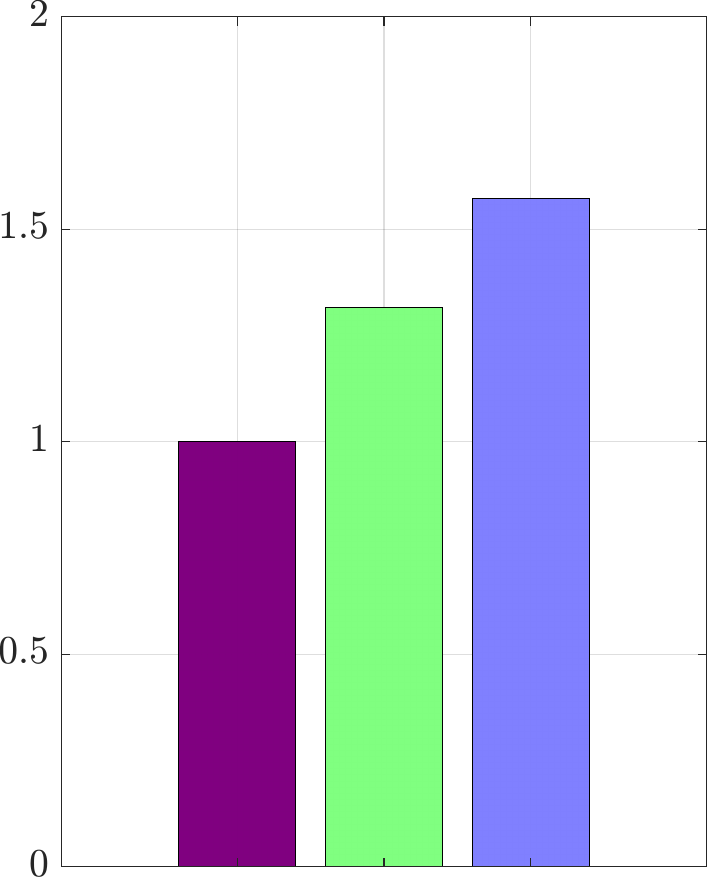}}
\subfigure[cavity18]{\includegraphics[width=0.2\textwidth]{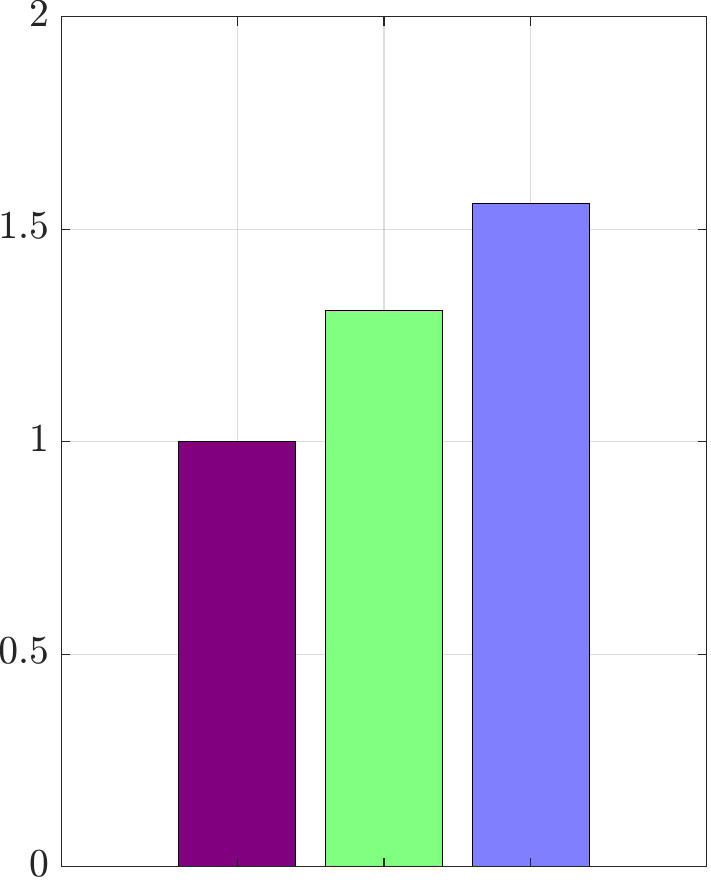}}
\subfigure[ex37]{\includegraphics[width=0.2\textwidth]{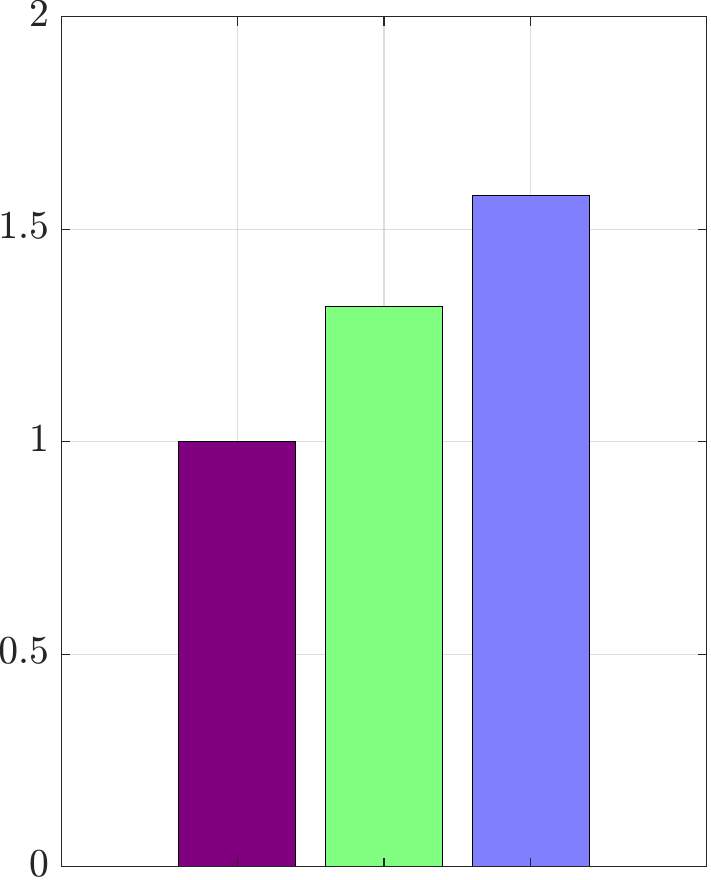}}
\subfigure[LeGresley\_2508]{\includegraphics[width=0.2\textwidth]{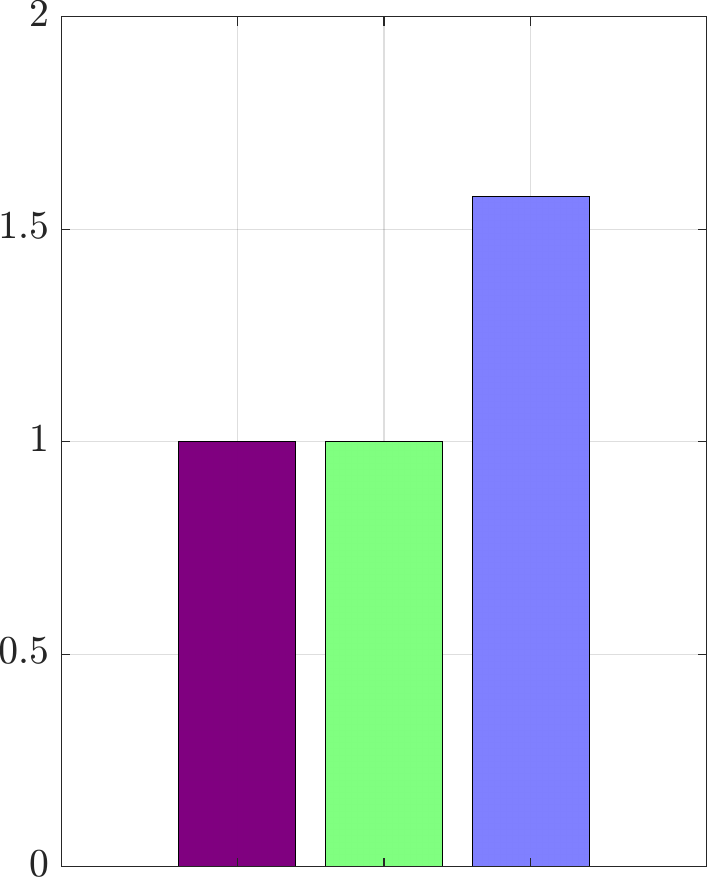}}
\\
\vspace{-8pt}
\subfigure[P64]{\includegraphics[width=0.2\textwidth]{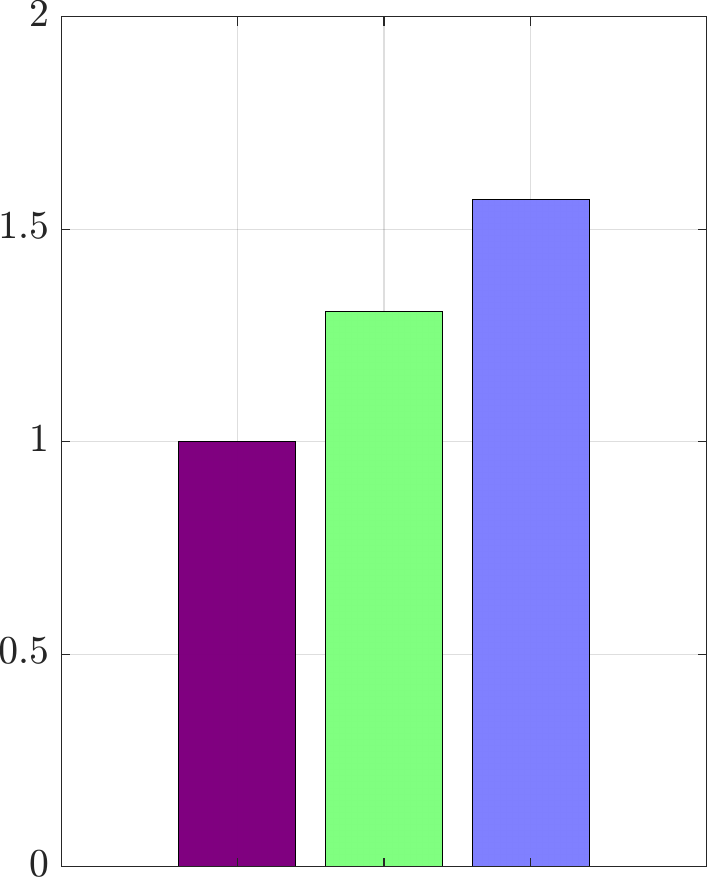}}
\subfigure[psmigr\_1]{\includegraphics[width=0.2\textwidth]{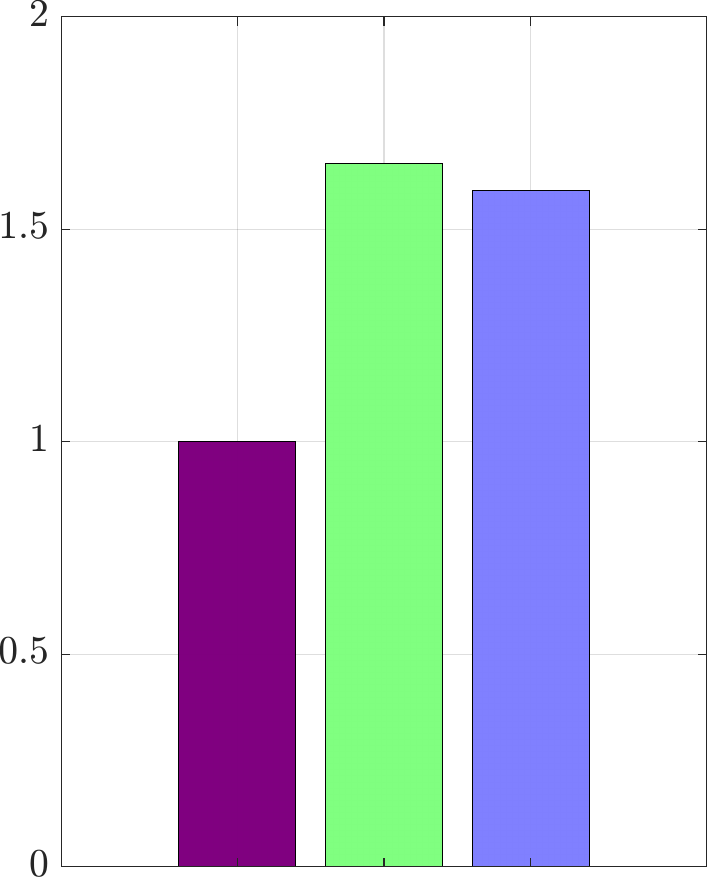}}
\subfigure[saylr3]{\includegraphics[width=0.2\textwidth]{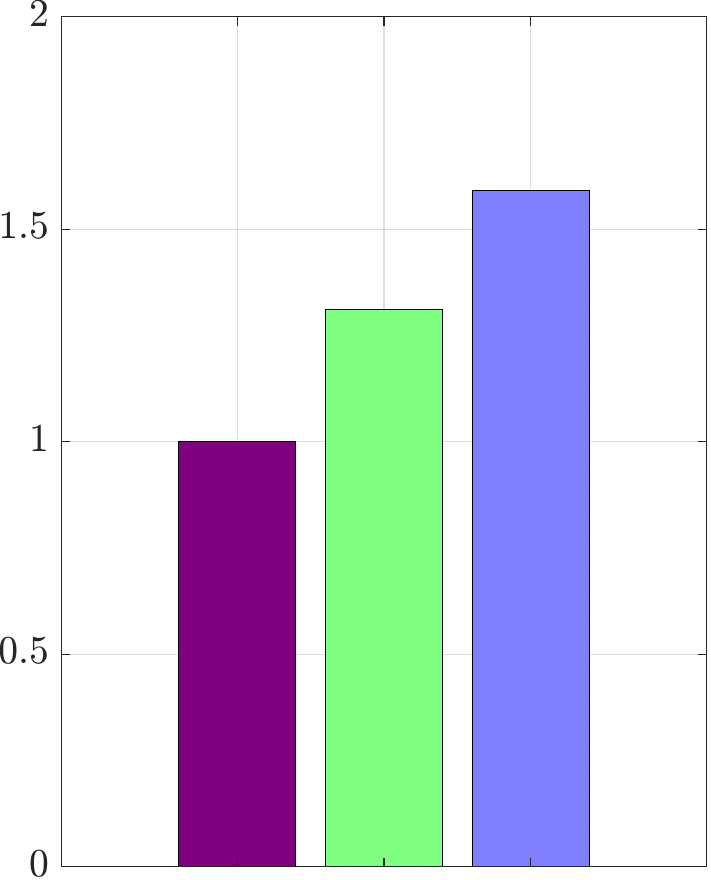}}
\subfigure[1138\_bus]{\includegraphics[width=0.2\textwidth]{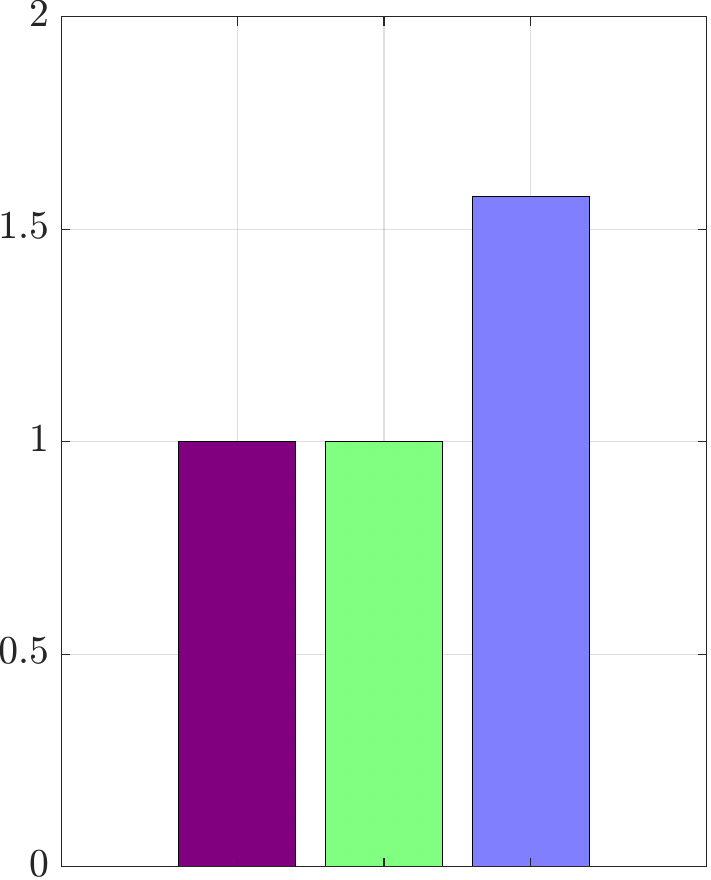}}

\caption{\new{Storage savings of adaptive-precision HODLR matrices relative to uniform (double) precision HODLR matrices}. The depth $\ell=8$; purples bars correspond to $\varepsilon=10^{-7}$, green bars indicate $\varepsilon=10^{-4}$, and blue bars indicate $\varepsilon=10^{-1}$.}
\label{fig:st-lshp}

\end{figure}

Our mixed-precision HODLR matrix use potentially lower precision to store the matrices $U$ and $V$, which can reduce the number of bits required for storing the HODLR matrix. We now evaluate how much we can reduce the storage cost by using our adaptive-precision construction approach (Algorithm~\ref{alg:mp-hodlr-adap}) compared to HODLR matrices stored entirely in double precision.  Our storage cost $\mathcal{S}$ is computed as total bits used for storing the matrices $U, V, D$. We compute the ratio of bits used in storing the double precision HODLR matrix to bits used in storing the adaptive-precision HODLR matrices (constructed using $\varepsilon=10^{-7}$, $10^{-4}$, and $10^{-1}$, corresponding to purple, green, and blue colors, respectively), for the eight test matrices listed in \tablename~\ref{tab:testmat}. 
\new{Note that both the uniform and mixed-precision HODLR matrices 
change with different choice of $\varepsilon$.}
The experimental results using HODLR matrices of depth $\ell=8$ are illustrated in \figurename~\ref{fig:st-lshp}; we also tested depths $\ell=2$ and $\ell=5$, which show only minor differences from $\ell=8$ and thus are not presented.

\new{Interestingly, for the Schur complement of the test matrix psmigr\_1,
the relative memory savings for $\varepsilon = 10^{-4}$ is slightly more than for $\varepsilon = 10^{-1}$.
A way to interpret this result is that, unlike in the other cases, the effect of storage reduction by using low precisions in the construction is lesser than the effect of storage reduction from coarser low-rank approximations, which are a consequence of larger $\varepsilon$.}
For smaller values of $\varepsilon$, i.e., $\varepsilon=10^{-7}$, the adaptive-precision scheme resulted in only double precision being used on our test set, and thus there are no storage savings. However, for larger values of $\varepsilon$, we can see that compared to storing the HODLR matrix in double precision, using our adaptive-precision HODLR matrix can effectively reduce the storage requirements, \new{typically by a factor of around $1.5$.}

\section{Conclusions}
\label{sec:conclude}
In this paper, we developed an adaptive-precision approach for constructing and storing HODLR matrices. We have analyzed the global representation error as well as the  backward error of HODLR matrix computations, including matrix--vector products and LU factorization using the mixed precision representation. Our analysis essentially shows that as long $u\lesssim \varepsilon/n$, where $u$ is the working precision and $\varepsilon$ is the approximation parameter for the HODLR matrix, the backward errors in these computations depend only on $\varepsilon$; in other words, the larger the approximation parameter, the lower the precision that one can use without affecting the backward error. These observations are confirmed by our numerical experiments. Our evaluation of storage cost further shows that our adaptive-precision scheme can reduce the storage cost significantly. We note that all the results in this paper are also valid when a uniform precision is used for storing and computing with HODLR matrices, and thus we also provide new backward error results for this case. 

The fused multiply-add plays a critical role in the matrix computations described in our paper, e.g., the Schur complement in the LU factorization; future work includes a rounding error analysis of the fused multiply-add operation based on our mixed precision HODLR matrix format.

\appendix

\bibliographystyle{siamplain}
\bibliography{references}
\end{document}


\maketitle

\section{A detailed example}

Here we include some equations and theorem-like environments to show
how these are labeled in a supplement and can be referenced from the
main text.
Consider the following equation:
\begin{equation}
  \label{eq:suppa}
  a^2 + b^2 = c^2.
\end{equation}
You can also reference equations such as \cref{eq:matrices,eq:bb} 
from the main article in this supplement.

\lipsum[100-101]

\begin{theorem}
  An example theorem.
\end{theorem}

\lipsum[102]
 
\begin{lemma}
  An example lemma.
\end{lemma}

\lipsum[103-105]

Here is an example citation: \cite{KoMa14}.

\section[Proof of Thm]{Proof of \cref{thm:bigthm}}
\label{sec:proof}
\lipsum[106-112]

\section{Additional experimental results}
\Cref{tab:foo} shows additional
supporting evidence. 

\begin{table}[htbp]
{\footnotesize
  \caption{Example table}  \label{tab:foo}
\begin{center}
  \begin{tabular}{|c|c|c|} \hline
   Species & \bf Mean & \bf Std.~Dev. \\ \hline
    1 & 3.4 & 1.2 \\
    2 & 5.4 & 0.6 \\ \hline
  \end{tabular}
\end{center}
}
\end{table}

\bibliographystyle{siamplain}
\bibliography{references}